\definecolor{myred}{HTML}{D70000}
\definecolor{mygreen}{HTML}{007040}
\newtheorem{theorem}{Theorem}
\newtheorem{lemma}[theorem]{Lemma}
\newtheorem{corollary}[theorem]{Corollary}
\theoremstyle{remark}
\newtheorem{remark}{Remark}
\theoremstyle{definition}
\newtheorem{definition}{Definition}
\newcommand{\pmat}{\setlength{\arraycolsep}{0.3em}\begin{pmatrix}}
\newcommand{\epmat}{\end{pmatrix}}
\numberwithin{theorem}{section}
\numberwithin{equation}{section}
\renewcommand{\le}{\leqslant}
\renewcommand{\ge}{\geqslant}
\renewcommand{\Re}{\operatorname{Re}}
\renewcommand{\Im}{\operatorname{Im}}
\newcommand{\sign}{\operatorname{sign}}
\newcommand{\Arg}{\operatorname{Arg}}
\DeclareRobustCommand{\SkipTocEntry}[5]{}
\title[Total nonnegativity of infinite Hurwitz matrices]
{Total nonnegativity of infinite Hurwitz matrices of entire and meromorphic
    functions} 
\subjclass[2010]{30D15 (primary); 15A23, 30B70, 30C15, 34D20, 40A15 (secondary)}
\author{Alexander Dyachenko}
\address{TU-Berlin, Sekretariat MA 3-6, Stra\ss e des 17. Juni 136,
   D-10623 Berlin, Germany}
\email{dyachenk@math.tu-berlin.de}
\thanks{This work
    was financially supported by the European Research Council under the European Union’s
    Seventh Framework Programme (FP7/2007--2013)/ERC grant agreement no. 259173.}
\def\noTOC#1{}
\begin{document}
\begin{abstract}
    In this paper we fully describe functions generating the infinite totally nonnegative
    Hurwitz matrices. In particular, we generalize the well-known result by Asner and
    Kemperman on the total nonnegativity of the Hurwitz matrices of real stable polynomials. An
    alternative criterion for entire functions to generate a P\'olya frequency sequence is also
    obtained. The results are based on a connection between a factorization of totally
    nonnegative matrices of the Hurwitz type and the expansion of Stieltjes meromorphic
    functions into Stieltjes continued fractions (regular $C$-fractions with positive
    coefficients).
\end{abstract}
\maketitle

\tableofcontents
\section{Introduction}
Functions mapping the upper half-plane of the complex plane into itself
($\mathcal{R}$-functions) are well studied and play a significant role in applications. The
subclass~$\mathcal{S}$ of $\mathcal{R}$-functions, the functions that are regular and
nonnegative over the nonnegative semi-axis (also known as Stieltjes functions) is of particular
interest. In this paper we demonstrate a connection of meromorphic $\mathcal{S}$-functions with
total nonnegativity of corresponding Hurwitz-type matrices (Theorem~\ref{th:main1}). As an
application, we study the following problem on the distribution of zeros.

A polynomial with no roots with a positive real part is called \emph{quasi-stable}. Asner
(see~\cite{Asner}) established that the Hurwitz matrix of a real quasi-stable polynomial is
totally nonnegative (although there are polynomials with totally nonnegative Hurwitz matrices
which are not quasi-stable). A matrix is called \emph{totally nonnegative} if all of its minors
are nonnegative. In addition, Kemperman (see~\cite{Kemperman}) showed that quasi-stable
polynomials have totally nonnegative \emph{infinite} Hurwitz matrices.

It turns out that the replacement of finite Hurwitz matrices with infinite Hurwitz matrices
allows us to prove the converse:
\emph{a polynomial is quasi-stable if its infinite Hurwitz matrix is totally nonnegative}. The
key to this is given in~\cite{Holtz}: a special matrix factorization, which was successfully
applied to a closely related problem in~\cite{HoltzTyaglov}. Moreover, when a theorem involves
an infinite Hurwitz matrix, it is natural to suggest that it can be generalized to entire
functions or power series. The first goal of the present paper is to obtain the following
extension of the results from~\cite{Asner},~\cite{Kemperman} and~\cite{Holtz} to power series,
including the converse result.

\begin{theorem}\label{th:qstable_tnn}
    Given a power series $f(z)=z^j\sum_{k=0}^\infty\,f_kz^k$ in the complex variable~$z$, where
    $f_0>0$ and $j$ is a nonnegative integer, the infinite Hurwitz matrix
    \begin{equation*}
        \mathcal H_f={\begin{pmatrix}
                f_0 & f_2 & f_4 & f_6 & f_8 & \hdots\\
                0 & f_1 & f_3 & f_5 & f_7 & \hdots\\
                0 & f_0 & f_2 & f_4 & f_6 & \hdots\\
                0 &   0 & f_1 & f_3 & f_5 & \hdots\\
                0 &   0 & f_0 & f_2 & f_4 & \hdots\\
                \vdots & \vdots & \vdots & \vdots & \vdots & \ddots
            \end{pmatrix}}
    \end{equation*}
    is totally nonnegative if and only if the series~$f$ converges to a function of the form
    \begin{equation}\label{eq:f_as_quo}
        f(z)=Cz^je^{\gamma_1 z+\gamma_2 z^2}
        \frac{
            \prod_{\mu}\left( 1+\frac {z}{x_\mu}\right)\prod_{\nu}\left(
                1+\frac {z}{\alpha_\nu}\right)\left(1+\frac {z}{\overline\alpha_\nu}\right)}{
            \prod_{\lambda}\left( 1+\frac {z}{y_\lambda}\right)
            \left( 1-\frac {z}{y_\lambda}\right)},
    \end{equation}
    where $C, \gamma_1, \gamma_2 \ge 0$, $x_\mu,y_\lambda > 0$, $\Re\alpha_\nu\ge 0$,
    $\Im\alpha_\nu>0$ and
    \[\textstyle
        \sum_{\mu} \frac 1{x_\mu}
        + \sum_{\nu} \Re(\frac{1}{\alpha_\nu})
        + \sum_{\nu} \frac 1{|\alpha_\nu|^2}
        + \sum_{\lambda} \frac 1{y_\lambda^2}<\infty.
    \]    
\end{theorem}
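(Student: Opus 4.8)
The plan is to pass from the matrix $\mathcal H_f$ to an associated meromorphic function via the even--odd decomposition, and then to read off the factorization~\eqref{eq:f_as_quo} from the representation theory of Stieltjes functions and of P\'olya frequency sequences, with Theorem~\ref{th:main1} supplying the bridge between total nonnegativity and the Stieltjes continued fraction. First I would note that $\mathcal H_f$ depends only on the coefficients $f_k$ and not on the prefactor $z^j$, so I may work with $F(z)=\sum_{k\ge0}f_kz^k$ and split it into even and odd parts, $F(z)=p(z^2)+zq(z^2)$ with $p(u)=\sum_kf_{2k}u^k$ and $q(u)=\sum_kf_{2k+1}u^k$. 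The rows of $\mathcal H_f$ are precisely the interleaved coefficient sequences of $p$ and $q$, so the bidiagonal (Stieltjes-type) factorization underlying Theorem~\ref{th:main1} should be identified with the regular $C$-fraction, with positive coefficients, of the even-to-odd ratio $\Phi(u)=q(u)/p(u)$. The key equivalence is thus: $\mathcal H_f$ is totally nonnegative if and only if $\Phi$ is a meromorphic Stieltjes function (an $\mathcal S$-function).

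Next I would exploit the fact that multiplying $f$ by an even factor leaves $\Phi$ unchanged. Factoring out the largest such multiplier, I would write $f(z)=E(z^2)\,g(z)$, where $E(u)=e^{\gamma_2u}/\prod_\lambda(1-u/y_\lambda^2)$ carries the entire even structure and $g$ is the quasi-stable part governed by $\Phi$. The representation theory then applies to the two pieces separately. For $g$, a generalized Hermite--Biehler argument converts the interlacing negative-real zero/pole data of the $\mathcal S$-function $\Phi$ into the zeros of $g$: the genuinely negative roots give the factors $1+z/x_\mu$, while the off-axis roots, forced into conjugate pairs by the reality of $p$ and $q$, give $(1+z/\alpha_\nu)(1+z/\overline\alpha_\nu)$ with $\Re\alpha_\nu\ge0$, $\Im\alpha_\nu>0$, and the residual transcendental growth contributes $Ce^{\gamma_1z}$ with $C,\gamma_1\ge0$. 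For the even multiplier, compatibility with total nonnegativity places $1/E(u)=e^{-\gamma_2u}\prod_\lambda(1-u/y_\lambda^2)$ in the Laguerre--P\'olya class with only positive zeros; equivalently the coefficients of $E$ form a P\'olya frequency sequence, so the Aissen--Schoenberg--Whitney--Edrei characterization forces $\gamma_2\ge0$, $y_\lambda>0$ and $\sum_\lambda1/y_\lambda^2<\infty$, producing after $u=z^2$ the symmetric pole pairs $(1+z/y_\lambda)(1-z/y_\lambda)$. Reassembling $f=z^jE(z^2)g(z)$ yields~\eqref{eq:f_as_quo}, and the separate summability requirements combine into the single stated condition.

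The converse is the easier half: starting from a function of the form~\eqref{eq:f_as_quo}, each elementary factor corresponds to multiplying $\mathcal H_f$ by an explicitly totally nonnegative matrix (the elementary building blocks of the Hurwitz-matrix factorization of Theorem~\ref{th:main1}), with the quasi-stable zeros contributing the bidiagonal blocks and the even multiplier a Toeplitz P\'olya-frequency block. The summability hypothesis guarantees that the infinite product of these factors converges, and Theorem~\ref{th:main1}, applied in reverse, then gives total nonnegativity of $\mathcal H_f$.

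The hard part will be the rigorous transcendental analysis in the ``only if'' direction. For a genuine power series I must show that total nonnegativity forces $\Phi$ to be an actual meromorphic Stieltjes function, not merely a formal continued fraction, and simultaneously forces the even multiplier into the P\'olya frequency class, then reconcile the growth of the Stieltjes data in the $u$-variable with a Hadamard factorization of genus at most two in the $z$-variable. This is where the exact condition $\sum_\mu1/x_\mu+\sum_\nu\Re(1/\alpha_\nu)+\sum_\nu1/|\alpha_\nu|^2+\sum_\lambda1/y_\lambda^2<\infty$ and the sign constraints on $\gamma_1,\gamma_2$ must be pinned down, and where any anomalous accumulation of zeros or poles outside the prescribed configuration must be excluded.
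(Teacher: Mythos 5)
Your skeleton coincides with the paper's: the even--odd splitting $f(z)=f_0z^j\bigl(q(z^2)+zp(z^2)\bigr)$ with $\mathcal H_f=f_0\,H(p,q)$, Theorem~\ref{th:main1} as the bridge to an $\mathcal S$-function, Theorem~\ref{th:AESW} for the even P\'olya-frequency factor, and a Grommer-type Hurwitz criterion (Theorem~\ref{th:stable_p01dec}) for the quasi-stable part. However, there are genuine gaps. The most serious is your allocation of the purely imaginary zero pairs, which is exactly the delicate point of this theorem. When $\Re\alpha_\nu=0$, the pair $\bigl(1+\frac{z}{\alpha_\nu}\bigr)\bigl(1+\frac{z}{\overline\alpha_\nu}\bigr)=1+\frac{z^2}{|\alpha_\nu|^2}$ is an \emph{even} factor of $f$, hence a common factor of the even and odd parts; it must live inside the common meromorphic factor $g(z^2)$ of Theorem~\ref{th:main1} (compare~\eqref{eq:g_form}), not be produced by the Hermite--Biehler step. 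Your even multiplier $E(u)=e^{\gamma_2u}/\prod_\lambda(1-u/y_\lambda^2)$ explicitly excludes even zeros, while your quasi-stable part $g$ is asserted to acquire zeros with $\Re\alpha_\nu\ge0$; but the only available tool for that part, the Grommer--Kre\u\i n criterion, \emph{assumes} the even and odd parts are coprime and consequently yields only $\Re\alpha_\nu>0$. An imaginary-axis zero in the coprime part would violate that hypothesis --- this is precisely the common-zero condition that Grommer overlooked and Kre\u\i n restored, as the paper points out. Two further lapses: your ``key equivalence'' inverts the ratio --- since the first row of $\mathcal H_f$ carries the even coefficients, Theorem~\ref{th:main1} applies with numerator the even part and denominator the odd part, so total nonnegativity corresponds to even/odd $\in\mathcal S$, not odd/even (the reciprocal of a nonconstant $\mathcal S$-function is never in $\mathcal S$); and Theorem~\ref{th:main1} needs the denominator normalized at the origin, so the degenerate case $f_1=0$, where total nonnegativity forces the entire odd part to vanish (the paper's Lemma~\ref{lemma:H_tnn-p_zero}), requires a separate argument you never supply.

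The converse direction is also shakier than you suggest. Your factor-by-factor scheme --- each elementary zero factor acting on $\mathcal H_f$ as an explicit totally nonnegative multiplier --- is unsupported: the identity $H(pg,qg)=H(p,q)\,T(g)$ used in the paper holds only for \emph{even} multipliers $g(z^2)$, and for non-even factors such as $1+z/x_\mu$ or a complex-conjugate quadratic there is no such ready-made product identity here; you would moreover need convergence of an infinite product of such matrix factors including the contribution of $e^{\gamma_1z}$. The paper avoids all of this by verifying condition \ref{item:3f} of Theorem~\ref{th:main1} directly: it isolates the genuinely quasi-stable factor $h(z)$ (all zeros with $\Re\alpha_\nu>0$ or on the negative real axis), shows its even and odd parts are coprime and of genus~$0$, invokes Theorems~\ref{th:stable_p01dec} and~\ref{th:cheb_mei_pos} to conclude $h_e/h_o\in\mathcal S$, obtains total nonnegativity of $H(h_o,h_e)$ from Theorem~\ref{th:main1}, and finishes with $\mathcal H_f=C\,H(h_o,h_e)\,T(g)$ and Cauchy--Binet. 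Your ``Theorem~\ref{th:main1} applied in reverse'' would amount to exactly this verification, which your sketch does not carry out.
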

\begin{remark}
    Stating herein that a power series converges, by default we assume it to be convergent in a
    neighbourhood of the origin. Moreover, where it creates no uncertainties we use the same
    abbreviation for the series and a function it converges to.
\end{remark}
\begin{remark}
    It is possible that $\{x_\mu\}_\mu\cap\{y_\lambda\}_\lambda\ne\varnothing$ in the
    expression~\eqref{eq:f_as_quo}. If so, the coinciding negative zeros and poles of the
    function~$f(z)$ cancel each other out, while its positive poles remain untouched. For
    example, although the series $\sum_{k=0}^\infty z^k$ satisfies Theorem~\ref{th:qstable_tnn},
    it converges to the function~$\frac{1}{1-z}$ with a unique positive pole. The number of such
    cancellations may be infinite, however it cannot affect the convergence of involved infinite
    products.
\end{remark}
Our second goal is achieved by Theorem~\ref{th:negroots_p0dec}, which is an extension
of~\cite[Theorem 4.29]{HoltzTyaglov}.

\begin{theorem}\label{th:negroots_p0dec}
    A power series~$f(z)={\sum_{k=0}^\infty}\, f_kz^k$ with $f_0> 0$ converges to an entire function of
    the form
    \begin{equation}\label{eq:ent_tnns_generator}
        f(z)=f_0\,e^{\gamma z}\prod_{\nu} \left(1+\frac {z}{\alpha_\nu}\right),
    \end{equation}
    where $\gamma\ge 0$, $\alpha_\nu>0$ for all~$\nu$ and $\sum_{\nu}\frac{1}{\alpha_\nu} <\infty$,
    if and only if the infinite matrix
    \[
    \mathcal D_f={\begin{pmatrix}
            f_0 & f_1 & f_2 & f_3 & f_4 & \hdots\\
            0 & f_1 & 2f_2 & 3f_3 & 4f_4 & \hdots\\
            0 & f_0 & f_1 & f_2 & f_3 & \hdots\\
            0 &     0 & f_1 & 2f_2& 3f_3 & \hdots\\
            0 &     0 & f_0 & f_1 & f_2 & \hdots\\
            \vdots & \vdots & \vdots & \vdots & \vdots & \ddots
        \end{pmatrix}}
    \]
    is totally nonnegative.
\end{theorem}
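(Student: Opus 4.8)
The plan is to recognise $\mathcal D_f$ as an ordinary infinite Hurwitz matrix of an auxiliary function and then invoke Theorem~\ref{th:qstable_tnn}. Comparing entries, the matrix $\mathcal D_f$ coincides with $\mathcal H_F$ for the function
\[
    F(z)=f(z^2)+zf'(z^2)=\sum_{k\ge 0}f_kz^{2k}+\sum_{k\ge 1}kf_kz^{2k-1},
\]
since its even coefficients are $F_{2k}=f_k$ and its odd coefficients are $F_{2k+1}=(k+1)f_{k+1}$, which reproduce exactly the two interlaced families of rows of $\mathcal D_f$. Note that $F(0)=f_0>0$ and that $F$ converges near the origin whenever $f$ does. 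Hence, by Theorem~\ref{th:qstable_tnn} (with $j=0$), $\mathcal D_f=\mathcal H_F$ is totally nonnegative if and only if $F$ converges to a function of the form~\eqref{eq:f_as_quo}. It therefore remains to prove the equivalence: $f$ has the form~\eqref{eq:ent_tnns_generator} if and only if $F(z)=f(z^2)+zf'(z^2)$ has the form~\eqref{eq:f_as_quo}.

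For the direction from~\eqref{eq:ent_tnns_generator} to~\eqref{eq:f_as_quo} I would write $F(z)=f(z^2)\bigl(1+z\psi(z^2)\bigr)$, where $\psi(u)=f'(u)/f(u)=\gamma+\sum_\nu\frac1{u+\alpha_\nu}$ is a Stieltjes function with poles only on the negative semi-axis. The factor $f(z^2)=f_0e^{\gamma z^2}\prod_\nu\bigl(1+z^2/\alpha_\nu\bigr)$ already supplies the constant $C=f_0$, the admissible Gaussian exponent $\gamma_2=\gamma\ge0$, and the purely imaginary conjugate zeros $\pm i\sqrt{\alpha_\nu}$ (so $\Re=0$, $\Im>0$, as allowed). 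The poles of $1+z\psi(z^2)$ cancel the zeros of $f(z^2)$, so $F$ is entire and carries no denominator, i.e.\ no $y_\lambda$-factors. The remaining task is to show that the zeros of $g(z)=1+z\psi(z^2)$ lie in the closed left half-plane $\{\Re z\le0\}$; since $g$ has real Taylor coefficients they are real or occur in conjugate pairs, contributing respectively the negative reals $-x_\mu$ and the admissible $\alpha$-type zeros. I would verify the half-plane location first for polynomials $f$ with only negative roots, where $F=f(z^2)+zf'(z^2)$ is quasi-stable by a Hermite--Biehler type argument for the pair $\bigl(f(z^2),zf'(z^2)\bigr)$, and then pass to the general function~\eqref{eq:ent_tnns_generator} by a limiting argument, obtaining~\eqref{eq:f_as_quo}.

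The converse direction is the main obstacle. Given that $F$ has the form~\eqref{eq:f_as_quo}, one recovers $f$ from the even part $f(z^2)=\tfrac12\bigl(F(z)+F(-z)\bigr)$ and the odd part $zf'(z^2)=\tfrac12\bigl(F(z)-F(-z)\bigr)$, which are rigidly linked by the identity (odd part)$/z=\frac{d}{du}$(even part, as a function of $u=z^2$). Exploiting this link, the ratio of the two parts equals the logarithmic derivative $f'(u)/f(u)$, and the structure~\eqref{eq:f_as_quo} forces $f'/f$ to be a meromorphic Stieltjes function, i.e.\ $f'(u)/f(u)=\gamma+\int_0^\infty\frac{d\sigma(t)}{u+t}$ with $\gamma\ge0$ and $\sigma\ge0$. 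The delicate points, which I expect to absorb most of the work, are: ruling out the denominator and the factor $e^{\gamma_1z}$ in~\eqref{eq:f_as_quo} (equivalently, showing that $f$ has no poles and that $\gamma_1=0$), using the evenness of $f(z^2)$ together with the positivity of the residues to exclude a continuous part of $\sigma$ so that $\sigma=\sum_\nu\delta_{\alpha_\nu}$, and tracking the convergence condition so that $\sum_\nu 1/\alpha_\nu<\infty$. Once $f'/f=\gamma+\sum_\nu\frac1{u+\alpha_\nu}$ is established, integration gives $f(u)=f_0e^{\gamma u}\prod_\nu(1+u/\alpha_\nu)$ with $\alpha_\nu>0$, which is precisely~\eqref{eq:ent_tnns_generator}, completing the proof.
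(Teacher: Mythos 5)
Your opening reduction is correct and clever: the entrywise identification $\mathcal D_f=\mathcal H_F$ for $F(z)=f(z^2)+zf'(z^2)$ checks out, and invoking Theorem~\ref{th:qstable_tnn} is not circular, since the paper proves that theorem independently of Theorem~\ref{th:negroots_p0dec}. You are in effect inverting the paper's own manoeuvre: the paper proves Theorem~\ref{th:qstable_tnn} by splitting a function into even and odd parts and passing to $H(p,q)$, whereas its proof of Theorem~\ref{th:negroots_p0dec} never forms $F$ at all --- it observes $\mathcal D_f=H(f',f)=f_1H(f'/f_1,f/f_1)$, handles $f_1=0$ by Lemma~\ref{lemma:H_tnn-p_zero}, and applies Theorem~\ref{th:main1} to the pair $(f',f)$ directly, using that $f/f'$ is the reciprocal of the logarithmic derivative and hence an $\mathcal S$-function.

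The genuine gap is that both halves of your remaining equivalence are deferred, and the deferred steps are the mathematical content. In the converse, \eqref{eq:f_as_quo} permits $F$ to have poles at $\pm y_\lambda$ and a factor $e^{\gamma_1 z}$, so a priori $f$ may have positive poles at $y_\lambda^2$; these are exactly the points you label ``delicate'' and do not resolve, and no mechanism for resolving them is indicated beyond the expectation that the derivative link will do it. The paper settles this in a few lines, precisely because it works with $(f',f)$ rather than $F$: Theorem~\ref{th:main1} yields $f=\tilde f g$ and $f'=hg$ with $\tilde f,h$ entire of genus~$0$, coprime, having only negative zeros, and $g$ of the form \eqref{eq:tnns_generator} with only positive poles; then $h=\tilde f'+\tilde f\,\frac{g'}{g}$, and since $\tilde f$ has no positive zeros, every pole of $g$ would survive as a pole of the entire function $h$ --- a contradiction, so $g$ is entire and \eqref{eq:ent_tnns_generator} follows (no $\gamma_1$ ever arises on this route). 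Your forward direction is likewise incomplete: Hurwitz's theorem applied to your polynomial approximants gives only the closed-left-half-plane location of the zeros of $F$, not the representation \eqref{eq:f_as_quo} with the summability condition \eqref{eq:p_form4}; one must still exclude, e.g., a factor $e^{\gamma_1 z}$ with $\gamma_1<0$ and control $\sum 1/x_\mu$ and $\sum\Re(1/\alpha_\nu)$ in the limit, which is essentially the Grommer--Kre\u\i n theorem (Theorem~\ref{th:stable_p01dec}) that the paper invokes inside the proof of Theorem~\ref{th:qstable_tnn}. As written, your text is a sound plan whose hardest steps remain open.
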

This theorem complements the following well-known criterion established by Aissen, Edrei,
Schoenberg and Whitney.
\begin{theorem}[\cite{AESW0,AESW1,AESW2}, see also~{\cite[Section~8 \S 5]{Karlin}}]\label{th:AESW}
    Given a formal power series~$f(z)={\sum_{k=0}^\infty}\, f_kz^k$, $f_0> 0$, the Toeplitz
    matrix
    \begin{equation}\label{eq:T_form}
        T(f)={\begin{pmatrix}
                f_0 & f_1 & f_2 & f_3 & f_4 & \hdots\\
                0 & f_0 & f_1 & f_2 & f_3 & \hdots\\
                0 &   0 & f_0 & f_1 & f_2 & \hdots\\
                0 &   0 & 0   & f_0 & f_1 & \hdots\\
                0 &   0 &   0 & 0   & f_0 & \hdots\\
                \vdots & \vdots & \vdots & \vdots & \vdots & \ddots
            \end{pmatrix}}
    \end{equation}
    is totally nonnegative if and only if $f$ converges to a meromorphic function of the form:
    \begin{equation}\label{eq:tnns_generator}
        f(z)=f_0\,e^{\gamma z}\frac{\prod_{\nu} \left(1+\frac {z}{\alpha_\nu}\right)}
        {\prod_{\mu} \left(1-\frac {z}{\beta_\mu}\right)},
    \end{equation}
    where
    $\gamma\ge 0$, $\alpha_\nu,\beta_\mu>0$ for all $\mu,\nu$ and
    $\sum_{\nu}\frac{1}{\alpha_\nu}+\sum_{\mu}\frac{1}{\beta_\mu} <\infty$.
\end{theorem}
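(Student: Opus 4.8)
The plan is to establish the two implications of Theorem~\ref{th:AESW} by completely different means: the sufficiency of the representation~\eqref{eq:tnns_generator} is an algebraic fact packaged with a limiting argument, while its necessity is the deep analytic heart of the statement and is where essentially all the difficulty lies.

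For sufficiency, the guiding principle is that $f\mapsto T(f)$ is multiplicative. Since the $(i,j)$ entry of a product of two upper-triangular Toeplitz matrices is the finite sum $\sum_{i\le k\le j} f_{k-i}\,g_{j-k}$, we have $T(fg)=T(f)\,T(g)$, the Toeplitz matrix of the Cauchy product; because of the banded triangular structure the Cauchy--Binet formula expresses every minor of $T(fg)$ as a \emph{finite} sum of products of a minor of $T(f)$ with a minor of $T(g)$, so total nonnegativity is preserved under multiplication of generating functions. It therefore suffices to verify total nonnegativity for the three elementary factors in~\eqref{eq:tnns_generator}. The binomial $1+z/\alpha_\nu$ gives a bidiagonal Toeplitz matrix with nonnegative entries $1$ and $1/\alpha_\nu$, plainly totally nonnegative; the geometric factor $(1-z/\beta_\mu)^{-1}=\sum_k\beta_\mu^{-k}z^k$ gives the matrix with entries $\beta_\mu^{-(j-i)}$ for $j\ge i$, which factors as $\operatorname{diag}(\beta_\mu^{\,i})$ times the totally nonnegative upper-triangular all-ones matrix times $\operatorname{diag}(\beta_\mu^{-j})$ and is thus totally nonnegative; and $e^{\gamma z}$ with $\gamma\ge 0$ is handled as the locally uniform limit $\lim_{n\to\infty}(1+\gamma z/n)^n$ of already-covered factors. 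Finally the summability $\sum_\nu 1/\alpha_\nu+\sum_\mu 1/\beta_\mu<\infty$ makes the partial products of~\eqref{eq:tnns_generator} converge to $f$ locally uniformly, hence coefficientwise; as each fixed minor of $T(f)$ is a polynomial in finitely many of the $f_k$ and is a limit of the nonnegative corresponding minors of the partial products, it is itself nonnegative.

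For necessity, suppose $T(f)$ is totally nonnegative. The $1\times1$ minors give $f_k\ge 0$, and the $2\times2$ minors $\det\left(\begin{smallmatrix} f_{k-1} & f_k \\ f_{k-2} & f_{k-1}\end{smallmatrix}\right)\ge 0$ yield the log-concavity $f_{k-1}^2\ge f_kf_{k-2}$; consequently $f_k/f_{k-1}$ is nonincreasing, its limit $L\in[0,\infty)$ exists, and the radius of convergence is $R=1/L\in(0,\infty]$. If the sequence terminates, the finite Aissen--Schoenberg--Whitney theorem~\cite{AESW0} finishes the argument, as $f$ is then a polynomial with only real nonpositive zeros and~\eqref{eq:tnns_generator} holds with $\gamma=0$ and no poles. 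The genuinely infinite case is the deep direction, and here I would follow Edrei's analytic route rather than any naive truncation: truncating a totally positive sequence generally destroys the property --- already the geometric sequence has truncations with complex zeros --- so the poles of $f$ cannot be produced by approximating $f$ by polynomials, and the analytic structure must instead be read off directly from the higher determinantal inequalities $\det(f_{\,c_j-r_i})\ge 0$. The key steps would be: (a) use this full family of minors to show that $f$ continues to a meromorphic function whose singularity at $z=R$ (forced real by Pringsheim's theorem) is a simple pole and whose poles all lie on $(0,+\infty)$; (b) use the same inequalities to force every zero of $f$ onto $(-\infty,0]$; and (c) estimate the growth of $f$ to see that it has order at most one apart from a single exponential factor, so that its Hadamard factorization collapses to~\eqref{eq:tnns_generator} with $\gamma\ge 0$, the summability $\sum_\nu 1/\alpha_\nu+\sum_\mu 1/\beta_\mu<\infty$ being exactly the genus-zero convergence of the resulting canonical products. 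The main obstacle is precisely steps (a)--(b): extracting the meromorphic continuation together with the real, sign-definite localization of zeros and poles purely from the positivity of the minors is the analytic core of the Aissen--Edrei--Schoenberg--Whitney theorem.
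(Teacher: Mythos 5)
This statement is not proved in the paper at all: Theorem~\ref{th:AESW} is imported as a known result from \cite{AESW0,AESW1,AESW2} (see also Karlin) and used as a black box, so there is no internal proof to compare against and your attempt must stand as a self-contained proof. Your sufficiency half does stand: $T(fg)=T(f)\,T(g)$ holds, Cauchy--Binet applies legitimately because each column of an upper-triangular Toeplitz matrix has finite support, the three elementary factors $1+z/\alpha_\nu$, $(1-z/\beta_\mu)^{-1}$ and $e^{\gamma z}=\lim_n(1+\gamma z/n)^n$ generate totally nonnegative Toeplitz matrices by the arguments you give, and the summability hypothesis lets each fixed minor of $T(f)$ be realized as a limit of nonnegative minors of the partial products. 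This is the standard argument for the easy direction and it is complete.

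The necessity half, however, is a program rather than a proof, and you say so yourself: steps (a)--(c) are a paraphrase of what Edrei proved in \cite{AESW2}, not a derivation. Nothing in your text extracts the meromorphic continuation of $f$ beyond $|z|=R$, or the localization of all poles on $(0,\infty)$ and all zeros on $(-\infty,0]$, from the determinantal inequalities; that extraction (via Hadamard's determinant theory of polar singularities and delicate growth estimates) is precisely the content of the cited papers, so the attempt does not establish the theorem. There is also a concrete error inside step (a): the singularity at $z=R$ need \emph{not} be a simple pole. The series $f(z)=(1-z)^{-1}(1-z)^{-1}=\sum_{k\ge 0}(k+1)z^k$ has the form~\eqref{eq:tnns_generator} with $\beta_1=\beta_2=1$, hence $T(f)$ is totally nonnegative by your own sufficiency argument, yet the nearest singularity is a double pole; repeated $\beta_\mu$ are allowed in~\eqref{eq:tnns_generator}, so any argument hinging on simplicity of the first pole would fail. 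A smaller repair is needed in your preliminary step as well: the monotonicity of $f_k/f_{k-1}$ presumes no vanishing coefficients; one should first use the $2\times 2$ minors to show that the support of $(f_k)$ is an initial interval (if $f_m=0$ then $f_{m-1}f_{m+1}\le 0$ forces all later coefficients to vanish when $f_0>0$), after which the ratio argument and the positive radius of convergence go through.
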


If we require the series $f(z)$ to represent an entire function under the assumptions of
Theorem~\ref{th:AESW}, we obtain that it has the form~\eqref{eq:ent_tnns_generator}. We prove
Theorems~\ref{th:qstable_tnn} and~\ref{th:negroots_p0dec} in
Section~\ref{sec:appl-entire-funct}.

A sequence $(f_k)_{k=0}^\infty$ is commonly called \emph{totally positive} (\emph{e.g.} \cite{AESW1}),
or a \emph{P\'olya frequency sequence} (\emph{e.g.} \cite{Karlin}), whenever the matrix $T(f)$ defined
by \eqref{eq:T_form} is totally nonnegative. By Theorem~\ref{th:AESW}, the general form of its
generating function is given by the formula~\eqref{eq:tnns_generator}.

\begin{definition}\label{def:R_R-1}
    We denote by $\mathcal R$ ($\mathcal R^{-1}$ resp.) the class of all
    meromorphic\footnote{In general, the condition to be meromorphic is replaced by less
        restrictive~$F(\overline z)=\overline{F(z)}$. Basic properties of $\mathcal R$-functions
        can be found, for example, in~\cite{KreinKac} and (for the meromorphic case)
        in~\cite{Wigner}. For brevity's sake we confine ourselves to meromorphic functions
        only.} functions $F(z)$ analytic in the complement of the real axis and such that
\[
\frac{\Im{F(z)}}{\Im{z}}\ge 0
\qquad\bigg(\text{or }
\frac{\Im{F(z)}}{\Im{z}}\le 0\text{ for }F\in\mathcal{R}^{-1}\text{ resp}\bigg).
\]
\end{definition}
Note that it is a straightforward consequence of the definition that $\mathcal{R}$- and
$\mathcal{R}^{-1}$-functions are real (\emph{i.e.}~map the real line into itself). Furthermore,
our definition includes real constants (like in~\cite{KreinKac}) into both classes $\mathcal{R}$
and $\mathcal{R}^{-1}$ although sometimes they are excluded in the literature
(\emph{e.g.}~\cite{Wigner}).

\begin{definition}
    Denote by $\mathcal{S}$ the subclass of $\mathcal{R}$-functions that are regular and
    nonnegative over the nonnegative reals. (Since $\mathcal{S}$-functions are meromorphic, they
    can have only negative poles and nonpositive zeros.)
\end{definition}
Consider the {\em infinite Hurwitz-type matrix} (\emph{i.e.} the
{\em matrix of the Hurwitz type})
\begin{equation}\label{eq:Hpq_def}
    H(p,q)={\begin{pmatrix}
        b_0& b_1& b_2 & b_3 & b_4 & b_5 & \hdots\\
        0 & a_0 & a_1 & a_2 & a_3 & a_4 & \hdots\\
        0 & b_0 & b_1 & b_2 & b_3 & b_4 & \hdots\\
        0 &   0 & a_0 & a_1 & a_2 & a_3 & \hdots\\
        0 &   0 & b_0 & b_1 & b_2 & b_3 & \hdots\\
        \vdots & \vdots & \vdots & \vdots & \vdots & \vdots & \ddots
    \end{pmatrix}},
\end{equation}
where $p(z)=\sum_{k=0}^\infty\,a_kz^k$ and $q(z)=\sum_{k=0}^\infty\,b_kz^k$ are formal power
series. Given two arbitrary constants $c$ and $\beta$, we also consider the matrix
\begin{equation}\label{eq:J}
J(c,\beta)={\begin{pmatrix}
    c & \beta & 0 & 0 & 0 & 0 & \hdots\\
    0 & 0 & 1 & 0 & 0 & 0 & \hdots\\
    0 & 0 & c & \beta & 0 & 0 & \hdots\\
    0 & 0 & 0 & 0 & 1 & 0 & \hdots\\
    0 & 0 & 0 & 0 & c & \beta & \hdots\\
    \vdots & \vdots & \vdots & \vdots & \vdots & \vdots & \ddots
\end{pmatrix}}.
\end{equation}
Matrices of this type will appear in our factorizations below.

Finally, for an infinite matrix $A=(a_{ij})_{i,j=1}^\infty$ and a fixed number $\rho$,
$0<\rho\le 1$, we consider the matrix norm
\[
\|A\|_\rho \coloneqq \sup_{i\ge 1}\sum_{j=1}^\infty \rho^{j-1} |a_{ij}|.
\]

\begin{remark}
    Convergence in this norm implies entry-wise convergence. Moreover, the norm $\|A\|_\rho$ of
    a matrix~$A$ coincides with the norm of the operator
    \[A_\rho:x\mapsto A\cdot\operatorname{diag}(1,\rho,\rho^2,\dots)\cdot x,\]
    acting on the space~$l_\infty$ of bounded sequences.
\end{remark}
\begin{remark}
    Let functions $g(z)$, $p(z)$, $q(z)$ and $g^{(k)}(z)$, $p^{(k)}(z)$, $q^{(k)}(z)$,
    $k=1,2,\dots$, be holomorphic on $\overline{D}_\rho\coloneqq\{z\in\mathbb{C}:|z|\le\rho\}$.
    Then the condition \[\lim_{k\to\infty}\|T(g^{(k)})-T(g)\|_\rho= 0 \]
    is equivalent to the uniform convergence of $g^{(k)}(z)$ to $g(z)$ on
    $\overline{D}_\rho$, and the condition
    \[\lim_{k\to\infty}\|H(p^{(k)},q^{(k)})-H(p,q)\|_\rho= 0 \]
    is equivalent to the uniform convergence of $p^{(k)}(z)$ to $p(z)$ and $q^{(k)}(z)$ to $q(z)$
    on~$\overline{D}_\rho$.
\end{remark}

Now we can formulate the more important result of this paper concerning properties of
$\mathcal{S}$-functions. It has its own value apart from the proofs of
Theorems~\ref{th:qstable_tnn} and~\ref{th:negroots_p0dec}.

\begin{theorem}\label{th:main1}
    Consider the ratio~$F(z)=\frac{q(z)}{p(z)}$ of power
    series~$p(z)=\sum_{k=0}^\infty\,a_kz^k$ and $q(z)=\sum_{k=0}^\infty\,b_kz^k$, normalized by
    the equality~$p(0)=a_0=1$. The following conditions are equivalent:
    \begin{enumerate}
    \item The infinite Hurwitz-type matrix $H(p,q)$ defined by~\eqref{eq:Hpq_def} is totally
        nonnegative.
    \item \label{item:2f}
        The matrix $H(p,q)$ possesses the infinite factorization
        \begin{equation}\label{eq:H_inf_fact}
            H(p,q)= \lim_{j\to\infty} \big( J(b_0,\beta_0)\,J(1,\beta_1) \cdots
            J(1,\beta_j)\big) \,H(1,1)\,T(g)
        \end{equation}
        converging in~$\|\cdot\|_\rho$-norm for some $\rho$, $0<\rho\le 1$. Here $b_0\ge 0$ and
        the sequence $(\beta_j)_{j\ge{0}}$ is nonnegative, has a finite sum and contains no
        zeros followed by a nonzero entry, that is
        \begin{equation}\label{eq:H_inf_fact_beta}
            \begin{gathered}
                \beta_0,\beta_1,\dots,\beta_{\omega-1}>0, \quad
                \beta_{\omega}=\beta_{\omega+1}=\dots=0,\\
                0\le\omega\le\infty, \quad \text{and} \quad
                \sum_{j=0}^\infty\beta_j<\infty.
            \end{gathered}
        \end{equation}
        The matrix~$T(g)$ denotes a totally nonnegative Toeplitz matrix of the
        form~\eqref{eq:T_form} with ones on its main diagonal.
    \item \label{item:3f}
        The ratio~$F(z)$ is a meromorphic $\mathcal S$-function; its numerator $q(z)$ and
        denominator $p(z)$ are entire functions of genus~$0$ up to a common meromorphic
        factor~$g(z)$ of the form~\eqref{eq:tnns_generator}, $g(0)=1$.
    \end{enumerate}
\end{theorem}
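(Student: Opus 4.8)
The plan is to establish the two equivalences $(1)\Leftrightarrow(2)$ and $(2)\Leftrightarrow(3)$, using the factorization~\eqref{eq:H_inf_fact} as the bridge between the purely matrix-theoretic condition~(1) and the function-theoretic condition~(3). The conceptual engine is a correspondence between the elementary factors $J(c,\beta)$ and single steps of the Stieltjes continued fraction (regular $C$-fraction) expansion of $F=\frac{q}{p}$. The first task is to record the algebraic identity describing how splitting off one block $J(1,\beta_j)$ transforms a Hurwitz-type matrix $H(p,q)$ into another such matrix $H(p',q')$, and to verify that on the level of the ratio this realizes exactly one contraction of the continued fraction, $F\mapsto\frac{1}{\beta_j z+\widetilde F}$. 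This step lemma makes~\eqref{eq:H_inf_fact} the matrix shadow of the $S$-fraction of $F$, with the Toeplitz tail $T(g)$ corresponding to a common factor $g$.

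For $(2)\Rightarrow(1)$, which is the easy direction, I would observe that every factor on the right of~\eqref{eq:H_inf_fact} is totally nonnegative: the matrices $J(c,\beta)$ with $c,\beta\ge 0$ and $H(1,1)$ are totally nonnegative by direct inspection of their minors, and $T(g)$ is totally nonnegative by Theorem~\ref{th:AESW}. Since finite products of totally nonnegative matrices are totally nonnegative, and since convergence in $\|\cdot\|_\rho$ forces entrywise convergence, hence convergence of each fixed minor, the limit $H(p,q)$ inherits total nonnegativity. The only point requiring care is that, thanks to the banded structure of the $J$-factors together with $\sum_j\beta_j<\infty$, each prescribed minor of the partial products eventually stabilizes.

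The direction $(1)\Rightarrow(2)$ is the heart of the argument and rests on the factorization technique of~\cite{Holtz}. Assuming $H(p,q)$ totally nonnegative with $a_0=1$, I would run the elimination that successively peels off $J(b_0,\beta_0),J(1,\beta_1),\dots$: at each stage total nonnegativity of the current Hurwitz matrix guarantees, through the signs of suitable low-order minors, that the extracted parameter $\beta_j$ is nonnegative and that the reduced matrix is again a totally nonnegative matrix of Hurwitz type. I would then show that the process either terminates ($\omega<\infty$) or continues indefinitely, that the pattern in~\eqref{eq:H_inf_fact_beta} holds (no zero $\beta$ preceding a nonzero one, since a vanishing step collapses the continued fraction), that $\sum_j\beta_j<\infty$ by reading summability off the boundedness of minors, and that the matrix remaining after all steps is genuinely Toeplitz, namely $T(g)$.

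Finally, $(2)\Leftrightarrow(3)$ follows by translating the factorization back to the analytic side through the step lemma: the finite product of $J$-factors contracts $F$ into a regular $C$-fraction with positive coefficients $\beta_j$ whose convergence, ensured by $\sum_j\beta_j<\infty$, yields by the classical Stieltjes theory a meromorphic $\mathcal S$-function, while $T(g)$ supplies exactly a common multiplier $g$ of the form~\eqref{eq:tnns_generator} (Theorem~\ref{th:AESW}); conversely, expanding a given $\mathcal S$-function of the prescribed form into its $S$-fraction and extracting $g$ reconstructs~\eqref{eq:H_inf_fact}. The main obstacle I anticipate is analytic rather than combinatorial: simultaneously controlling the convergence of the infinite factorization in the $\|\cdot\|_\rho$-norm and identifying the limit's membership in $\mathcal S$, that is, passing from the formal continued fraction to genuine analytic convergence together with the required half-plane mapping property of $F$.
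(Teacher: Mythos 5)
Your easy direction $(2)\Rightarrow(1)$ matches the paper (totally nonnegative factors, Cauchy--Binet for the column-finite partial products, minors passing to the entry-wise limit). The genuine gaps are in your central direction $(1)\Rightarrow(2)$, which you plan as a purely matrix-side elimination in the style of~\cite{Holtz}. First, the summability $\sum_j\beta_j<\infty$ cannot be ``read off the boundedness of minors''; no such matrix argument is given, and the paper obtains it only after converting condition (i) into function theory. Concretely, the paper proves (i)$\Rightarrow$(iii) first: the Toeplitz submatrices $T(p)$, $T(q)$ of $H(p,q)$ are totally nonnegative, so by Theorem~\ref{th:AESW} the series converge to meromorphic functions of the form~\eqref{eq:tnns_generator}; Corollary~\ref{cr:TNN-decomp-hurw-matr} (via Lemma~\ref{lemma:PNNSP} and Theorem~\ref{th:cheb_mei_pos}) then gives $F\in\mathcal S$; and a separate argument --- the interlacing~\eqref{eq:order1} combined with the $\Arg$ estimates~\eqref{eq:aux1}--\eqref{eq:aux3} --- is needed to show the relative exponential factor $e^{\gamma z}$ has $\gamma=0$, which is exactly what makes $p$ and $q$ genus-$0$ entire up to the common factor $g$. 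Your sketch contains no substitute for the $\gamma=0$ step or for the extraction of $g$; neither falls out of elimination. The actual proof of $\sum_j\beta_j<\infty$ (Lemma~\ref{lemma:decomp_decrease}) is analytic: after dividing out $g$, the zeros of consecutive denominators $\tilde p_{j-1},\tilde p_j$ are negative, simple and interlacing (Lemma~\ref{lemma:cheb1}), so the first Taylor coefficients $a_1^{(j)}$ decrease, and the telescoping identity $a_1^{(j-1)}=\beta_j+a_1^{(j)}$ yields the finite sum via~\eqref{eq:beta_finite}. Nothing in your outline produces this mechanism.

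A second concrete gap: your claim that at each elimination stage ``the reduced matrix is again a totally nonnegative matrix of Hurwitz type'' is unsupported and cannot be obtained by inverting the extracted factor, since $J(c,\beta)$ is singular (its even rows have zero diagonal entries); total nonnegativity does not pass from $H_j=J(\cdot)\,H_{j+1}$ to $H_{j+1}$ in any direct way. The paper deliberately avoids this claim: Lemma~\ref{lemma:H_lpm} propagates only the restricted family of minors~\eqref{eq:PartPos} (consecutive rows, one free column), which is weaker than total nonnegativity but suffices for the $\mathcal S$-criterion of Theorem~\ref{th:cheb_mei_pos}. Finally, the convergence issue you flag at the end is real, but the resolution is absent from your sketch: in Lemma~\ref{lemma:dec_general} the $\|\cdot\|_\rho$-convergence of the partial products rests on the explicit entry bound~\eqref{eq:U_bound} for the tails $U_j$ (using $\sum\beta_j<\tfrac12$ beyond some $j_0$) together with Corollary~\ref{cr:norm_H}, i.e.\ $\|H(\tilde p_{j+1},\tilde p_j)-H(1,1)\|_1\to 0$, both of which again depend on the monotone decrease of the coefficients $a_k^{(j)}$. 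Your proposed architecture $(1)\Leftrightarrow(2)$, $(2)\Leftrightarrow(3)$ could in principle be made to work, but only by importing this same function-theoretic machinery (Theorem~\ref{th:cheb_mei_pos}, Stieltjes' Theorem~\ref{th:Stieltjes}, Lemma~\ref{lemma:decomp_decrease}); as written, the proposal names the right landmarks but omits the arguments that carry the load.
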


\begin{remark}
    Note that
    \begin{equation}\label{eq:J(c,0)H(1,1)}
        J(c,0)H(1,1) = H(1,c).
    \end{equation}
    If $\omega$ is a finite number in~\eqref{eq:H_inf_fact_beta}, then
    $\beta_{\omega+1}=\beta_{\omega+2}=\dots=0$ implying
    \[J(b_0,\beta_0) \cdots J(1,\beta_{\omega+1})\,H(1,1)
    =J(b_0,\beta_0) \cdots  J(1,\beta_{\omega+1})\,J(1,\beta_{\omega+2})\,H(1,1)=\cdots.\]
    As a consequence, the factorization~\eqref{eq:H_inf_fact} can be expressed as follows in
    this case
    \begin{equation}\label{eq:H_fin_fact}
        H(p,q)=\begin{cases}
            J(b_0,0)\,H(1,1)\,T(g) = H(1,b_0)\,T(g) &\text{if }\omega=0;\\
            J(b_0,\beta_0)\,J(1,\beta_1) \cdots J(1,\beta_{\omega-1})\,H(1,1)\,T(g)
            &\text{if }0<\omega<\infty.
        \end{cases}
    \end{equation}
\end{remark}

\begin{remark}
    The number $\rho$ in Theorem~\ref{th:main1} can be anywhere in $(0,1]\cap(0,\rho_0)$, here
    $\rho_0$ denotes the radius of convergence of~$g(z)$ (which is positive by
    Theorem~\ref{th:AESW}).

    The matrix $T(g)$ from the condition~\ref{item:2f} of Theorem~\ref{th:main1} is the Toeplitz
    matrix of the function $g(z)$ from \ref{item:3f} given by~\eqref{eq:Hpq_def}.

    If we require~$p(z)$ and $q(z)$ to be entire functions in Theorem~\ref{th:main1}, then the
    function~$g(z)$ has the form~\eqref{eq:ent_tnns_generator} or $g(z)\equiv 1$ and
    \eqref{eq:H_inf_fact} converges in $\|\cdot\|_1$.
\end{remark}
\begin{remark}
    In the case $q(0)=b_0=0$ it can be convenient to ``trim'' the matrix~$H(p,q)$ by removing
    its first row and its trivial first column. This corresponds to replacing~$J(0,\beta_0)$ in
    the factorization~\eqref{eq:H_inf_fact2} by its diagonal analogue
    $\operatorname{diag}(1,\beta_0,1,\beta_0,\dots)$.
\end{remark}
\begin{remark}
    Since entire functions of genus $0$ have unique Weierstra\ss' representation, it makes sense
    to consider the \emph{greater common divisor} of a subset of this class. Accordingly, two
    entire functions~$p$ and~$q$ of genus~$0$ are \emph{coprime} whenever $\gcd(p,q)\equiv 1$.
\end{remark}

Consider the continued fraction
\begin{equation}\label{eq:FasCF2a}
    b_0+\frac{\beta_0 z}1 \,\underset{+}{}\,
    \frac{\beta_1 z}1 \,\underset{+}{}\,
    \frac{\beta_2 z}1 \,\underset{+\cdots+}{}\,
    \frac{\beta_{\omega-1} z}1,\ \ 
    b_0\ge 0,\ \ \beta_0,\beta_1,\dots,\beta_{\omega-1}>0,\ \  0\le\omega\le\infty,
\end{equation}
where we combine both finite (terminating) and infinite cases. If the continued fraction is
infinite, we assume $\omega=\infty$. The following Corollary (see its proof in
Subsection~\ref{sec:S-functions-as-CF}) allows us to connect the
factorization~\eqref{eq:H_inf_fact} with continued fractions of this type.
\begin{corollary}\label{cr:main2}
    Let~$F(z)=\frac{q(z)}{p(z)}$ be a meromorphic $\mathcal{S}$-function, where the entire
    functions $p(z)$ and $q(z)$ are of genus~$0$. Then it can be expanded into a uniformly
    convergent continued fraction of the form~\eqref{eq:FasCF2a} with exactly the same
    coefficients $b_0$ and $(\beta_j)_{j=0}^{\omega-1}$, $\sum_{j=0}^{\omega-1}\beta_j<\infty$,
    as in the factorization~\eqref{eq:H_inf_fact2} of the matrix~$H(p,q)$. No other continued
    fractions of the form
    \[
    F(z) = c_0+\frac{c_1 z^{r_1}}1 \,\underset{+}{}\,
    \frac{c_2 z^{r_2}}1 \,\underset{+}{}\,
    \frac{c_3 z^{r_3}}1 \,\underset{+\cdots+}{}\,
    \frac{c_\omega z^{r_\omega}}1,
    \]
    where $c_j\ne 0$ and $r_j\in\mathbb{N}$ for $j=1,\dots,\omega$, $0\le \omega\le \infty$, can
    correspond to the Taylor series of~$F(z)$.
\end{corollary}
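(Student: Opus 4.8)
The plan is to read off the continued fraction directly from the matrix factorization supplied by Theorem~\ref{th:main1}, and then to settle uniqueness by the forced ``peeling'' recursion that any corresponding continued fraction must obey. Since $F=\frac{q}{p}$ is a meromorphic $\mathcal{S}$-function whose numerator and denominator are entire of genus~$0$, condition~\ref{item:3f} of Theorem~\ref{th:main1} holds; the equivalence \ref{item:3f}$\Leftrightarrow$\ref{item:2f} then furnishes the factorization~\eqref{eq:H_inf_fact} of $H(p,q)$, with $b_0\ge 0$, a nonnegative summable sequence $(\beta_j)_{j\ge 0}$ subject to~\eqref{eq:H_inf_fact_beta}, and a Toeplitz factor $T(g)$. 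Because $p$ and $q$ are entire, $g$ is an entire function of the form~\eqref{eq:ent_tnns_generator} (or $g\equiv 1$) and is a common factor of $p$ and $q$, so that $F=\frac{q}{p}=\frac{q_0}{p_0}$ with $p_0=p/g$, $q_0=q/g$. Hence $g$ cancels in $F$, and the continued fraction of $F$ is governed solely by the numbers $b_0,(\beta_j)$ carried by the $J$-factors---precisely the data of~\eqref{eq:FasCF2a}.

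First I would check that the $J$-factors reproduce the convergents of~\eqref{eq:FasCF2a}. Writing $A_n/B_n$ for the $n$-th convergent, I would prove by induction on $n$ that $J(b_0,\beta_0)\,J(1,\beta_1)\cdots J(1,\beta_{n-1})\,H(1,1)=H(B_n,A_n)$, the base case being~\eqref{eq:J(c,0)H(1,1)} and the inductive step amounting to the observation that left multiplication by one further $J(1,\beta_n)$ performs exactly one step of the convergent recurrence $A_{n+1}=A_n+\beta_n z A_{n-1}$, $B_{n+1}=B_n+\beta_n z B_{n-1}$ at the level of Hurwitz matrices. This simultaneously identifies the coefficients in~\eqref{eq:FasCF2a} with those in the factorization and shows that the Taylor series of $A_n/B_n$ matches that of $F$ to an order growing with $n$. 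To turn this formal correspondence into uniform convergence I would invoke the $\|\cdot\|_\rho$-convergence of the product in~\eqref{eq:H_inf_fact} guaranteed by Theorem~\ref{th:main1}, valid for every $\rho\in(0,1]\cap(0,\rho_0)$ with $\rho_0$ the radius of convergence of $g$ from Theorem~\ref{th:AESW}. Norm convergence is entrywise, so the denominators $B_n$ and numerators $A_n$ converge coefficientwise to $p_0$ and $q_0$; as $p_0(0)=1\ne 0$, the ratios $\frac{A_n}{B_n}$ converge to $\frac{q_0}{p_0}=F$ uniformly on every disc $\overline{D}_\rho$ free of poles of $F$, and hence on compact subsets of the domain of $F$ by analytic continuation. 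If $\omega<\infty$ the product terminates by~\eqref{eq:H_fin_fact} and $F$ equals the value of the finite continued fraction; the case $\omega=\infty$ is exactly the limit just described. This yields the asserted uniformly convergent expansion~\eqref{eq:FasCF2a} with the very coefficients $b_0,(\beta_j)_{j=0}^{\omega-1}$ of the factorization and $\sum_{j}\beta_j<\infty$.

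For the uniqueness clause, suppose $F$ corresponds to some continued fraction $c_0+\frac{c_1 z^{r_1}}{1}\,\underset{+}{}\,\frac{c_2 z^{r_2}}{1}\,\underset{+}{}\,\cdots$ with $c_j\ne 0$ and $r_j\in\mathbb{N}$. Matching constant terms gives $c_0=F(0)=b_0$. Denoting by $G$ the tail after the first floor, one has $G(0)=0$ and $F-c_0=\frac{c_1 z^{r_1}}{1+G}=c_1 z^{r_1}+O\bigl(z^{r_1+r_2}\bigr)$, so that $r_1=\operatorname{ord}(F-c_0)$ and $c_1$ is the leading Taylor coefficient of $F-c_0$; both are determined by the series of $F$ alone. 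Passing to $1+G=\frac{c_1 z^{r_1}}{F-c_0}$ and iterating, every $c_j$ and every $r_j$ is forced by the Taylor coefficients of $F$, so a corresponding continued fraction of this form is unique. Since~\eqref{eq:FasCF2a} is one such fraction---necessarily with all $r_j=1$---it is the only one, as claimed.

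The principal obstacle is the step of the second paragraph together with its quantitative counterpart: verifying that the $J$-factor bookkeeping reproduces the convergent recurrence exactly (so that the factorization coefficients literally \emph{are} the continued-fraction coefficients), and that the operator-norm convergence of the infinite matrix product genuinely transfers to uniform convergence of the scalar continued fraction on the whole domain of analyticity of $F$, rather than merely near the origin. The remaining ingredients---the cancellation of $g$ in $F$ and the forced recursion for uniqueness---are then routine.
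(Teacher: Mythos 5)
Your identification of the continued-fraction coefficients with the factorization coefficients is sound and matches the paper's mechanism: the induction $J(b_0,\beta_0)\,J(1,\beta_1)\cdots J(1,\beta_{n-1})\,H(1,1)=H(P_n,Q_n)$ is implicit in Lemma~\ref{lemma:dec_general}, summability of the $\beta_j$ ultimately rests on Lemma~\ref{lemma:decomp_decrease}, and your ``peeling'' recursion for uniqueness is precisely the correspondence relations~\eqref{F0asCCF_corr} underlying the Leighton--Scott theorem (Theorem~\ref{th:c_fractions}), which the paper simply cites rather than re-derives. Invoking Theorem~\ref{th:main1} here is also not circular, since its proof does not use the corollary.

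However, there is a genuine gap at the convergence step, and you flagged the right spot yourself. The $\|\cdot\|_\rho$-convergence in~\eqref{eq:H_inf_fact} holds only for $\rho\le 1$ with $\rho<\rho_0$, so it yields uniform convergence of the convergents $Q_n/P_n$ to $F$ only on a (possibly small) disc $\overline{D}_\rho$ about the origin. Your jump ``and hence on compact subsets of the domain of $F$ by analytic continuation'' is invalid: convergence of a sequence of analytic functions on a subdomain does not propagate by continuation --- $z^n\to 0$ uniformly on compacts of $|z|<1$ yet converges nowhere beyond the unit circle --- and the Vitali/Montel route would require local uniform boundedness of the convergents on all of $\mathbb{C}$ minus the poles of $F$, which you have not established and which is not obvious. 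This is exactly the point where the paper invokes Stieltjes' theorem (Theorem~\ref{th:Stieltjes}): for $b_0\ge 0$ and positive $\beta_j$ with $\sum_j\beta_j<\infty$, the partial numerators and denominators converge locally uniformly \emph{on all of} $\mathbb{C}$ to coprime entire functions of genus~$0$, whence the fraction~\eqref{eq:FasCF2a} converges uniformly on compacts avoiding the poles of $F$; Theorem~\ref{th:c_fractions} then confirms that the limit is $F$ and that no other $C$-fraction of the form~\eqref{F0asCCF} corresponds to its Taylor series. Replacing your continuation step by a citation of Theorem~\ref{th:Stieltjes} repairs the argument and essentially reduces it to the paper's proof.
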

\begin{remark}
    Corollary~\ref{cr:main2} implies that
    \emph{each pair~$(p(z), q(z))$ satisfying Theorem~\ref{th:main1} determine a unique
        factorization of the form~\eqref{eq:H_inf_fact}}.
\end{remark}

Let $p(z)$ and $q(z)$ be real polynomials. Denote
\[
u(z)\coloneqq\sum_{k=0}^na_kz^{n-k}=z^np\left(\frac{1}{z}\right)\ \text{and}\
v(z)\coloneqq\sum_{k=0}^nb_kz^{n-k}=z^nq\left(\frac{1}{z}\right),
\]
where $n=\max\{\deg p,\deg q\}$. In this case it is more common to work with the matrix
$\tilde{H}(u,v)\coloneqq H(p,q)$ instead of~$H(u,v)$.

In fact, Theorem~\ref{th:main1} extends the following result by Holtz and Tyaglov to
meromorphic functions. In~\cite[Theorems~1.46 and~3.43, Corollaries~3.41--3.42]{HoltzTyaglov}
they established that the matrix~$\tilde{H}(u,v)$ is totally nonnegative if and only if it can
be factored as follows
\begin{equation}\label{eq:tH_fact}
\tilde{H}(u,v)=J(c_0,1)\dots J(c_j,1)\,H(1,0)\,T(g),\quad c_1,\dots,c_j>0,
\end{equation}
where $T(g)$ is totally nonnegative and $g=\gcd(u,v)$. Note that the
factorization~\eqref{eq:tH_fact} corresponds to~\eqref{eq:H_fin_fact} after the substitutions
$b_0=c_0$, $\beta_0=(c_1)^{-1}$ and $\beta_{i-1}=(c_{i-1}c_{i})^{-1}$ for $i=2,\dots,j$. Moreover, by
Theorem~3.44 from~\cite{HoltzTyaglov} the matrix~$\tilde{H}(u,v)$ is totally nonnegative if and
only if $v(z)$ and $u(z)$ have no positive zeros and $\frac{v}{u}\in\mathcal{R}^{-1}$. Since
\[
\frac{p(z)}{q(z)} =\frac{v\left(\frac 1z\right)}{u\left(\frac 1z\right)},
\]
we obtain the polynomial analogue of Theorem~\ref{th:main1}.

Earlier, Holtz (see~\cite{Holtz}) found that the infinite Hurwitz matrix of a stable polynomial
(\emph{i.e.} a polynomial with no roots with nonnegative real part) has the
factorization~\eqref{eq:tH_fact} with $T(g)$ equal to the identity matrix. Additionally, each of
the factors~$J(c_j,1)$ corresponds to a step of the Routh scheme. These factorizations coincide
because the problems considered in~\cite{Holtz} and~\cite{HoltzTyaglov} are closely connected
(see, for example, the monographs of Gantmakher~\cite[Ch.~XV]{Gantmakher} and
Wall~\cite[Chapters IX and~X]{Wall}). In order to deduce Theorem~\ref{th:qstable_tnn} from
Theorem~\ref{th:main1}, we are using the same underlying connection.

\section{Basic facts}\label{sec:basic_facts}
Here we consider some facts that are quite significant, although, in fact, they are not new. We
put them here to introduce the area and our notation. The most ``non-standard'' assertion here
is Lemma~\ref{lemma:decomp_decrease}, since it reverses the approach of
Theorem~\ref{th:Stieltjes}.
\subsection{$\mathcal S$-functions in terms of Hurwitz-type matrices}
\label{sec:S-functions-vs-Hm}
Consider power series
\begin{equation}\label{eq:pq_exp}
    p(z)=\sum_{k=0}^\infty\,a_kz^k,\quad a_0=1,
    \quad\text{and}\quad
    q(z)=\sum_{k=0}^\infty\,b_kz^k,\quad b_0\ge 0.
\end{equation}
Let us introduce the following notations
\[
p_0(z)\coloneqq p(z),\quad p_{-1}(z)\coloneqq q(z),
\quad H\coloneqq H(p,q),\quad\text{and}\quad H_0\coloneqq H(p_0,p_{-1})=H.
\]
Denote the minor of a matrix~$A$ with rows $i_1,i_2,\dots,i_k$ and columns $j_1,j_2,\dots,j_k$
by
\[A{\pmat i_1&i_2&\dots& i_k\\j_1&j_2&\dots& j_k \epmat}.\]
In addition set
\begin{equation}\label{eq:H_minor_sm}
    A^{(k)}\coloneqq A{\pmat 2&3&\dots& k\\2&3&\dots& k \epmat}.
\end{equation}

If~the number~$\beta_0=b_1-b_0\,a_1 = H_0^{(3)}$ is nonzero, we define
\[p_1(z)\coloneqq\dfrac{q(z)-b_0\,p(z)}{\beta_0 z}\text{\quad and\quad}H_1\coloneqq H(p_1,p_0).\]
Now we can perform the same manipulations with the pair~$p_1(z)$, $p_0(z)$. That is, we can make
the next step of the following algorithm.

At the $j$th step, $j=0,1,2\dots$, the series~$p_{j}(z)$ and~$p_{j-1}(z)$ are already defined,
as well as the matrix $H_j=H(p_j,p_{j-1})$. We set
\begin{equation}\label{eq:process0}
\beta_j\coloneqq H_j^{(3)},
\end{equation}
and, if $\beta_j$ is nonzero, we set
\begin{equation}\label{eq:process1}
    p_{j+1}(z)\coloneqq\frac{p_{j-1}(z)-p_{j-1}(0)\,p_j(z)}{\beta_j z}
    \quad \big(\text{note that }p_{j-1}(0)=1\text{ when }j\ge 1\big)
\end{equation}
so that $H_{j+1}\coloneqq H(p_{j+1},p_{j})$. These steps can be repeated unless~$\beta_j=0$. In
Corollary~\ref{cr:FjS_i_Fj+1S} we will show that $\beta_j>0$ whenever
$F_j(z)=\frac{p_{j-1}(z)}{p_j(z)}$ represents a non-constant meromorphic $\mathcal{S}$-function.
To do this we need some auxiliary facts.

Suppose that $\beta_i\ne 0$, $i=0,1,\dots,j$ for some nonnegative $j$, such that the power
series~$p_{j-1}(z)$, $p_{j}(z)$ and $p_{j+1}(z)$ are defined according to the recurrence
formula~\eqref{eq:process1}.

\begin{lemma}\label{lemma:H_lpm}
    The identity\footnote{The notation~$\lfloor a\rfloor$ stands
        for the maximal integer not exceeding $a$.}
    \begin{equation*}
        H_j\pmat 2&3&\hdots&k&k+1\\2&3&\hdots&k&i+ 1\epmat
        = \beta_j^{\left\lfloor\frac{k}{2}\right\rfloor}
        H_{j+1}\pmat 2&3&\hdots&k-1&k\\2&3&\hdots&k-1&i\epmat,
    \end{equation*}
    holds for all $k=2,3,\dots$ and $i=k,k+1,\dots$.
\end{lemma}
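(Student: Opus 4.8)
The plan is to convert the three-term recurrence~\eqref{eq:process1} into linear relations among the rows of $H_j=H(p_j,p_{j-1})$ and $H_{j+1}=H(p_{j+1},p_j)$, and then to evaluate the left-hand minor by a row reduction, one Laplace expansion, and the intrinsic shift symmetry of Hurwitz-type matrices. Writing $p_m(z)=\sum_k c^{(m)}_k z^k$, the recurrence says $c^{(j-1)}_k=p_{j-1}(0)\,c^{(j)}_k+\beta_j\,c^{(j+1)}_{k-1}$. Because the even rows of $H_j$ and the odd rows of $H_{j+1}$ both carry the coefficients of $p_j$ (shifted identically), whereas the odd rows of $H_j$ carry those of $p_{j-1}$, these scalar identities amount to the matrix factorization $H_j=J\big(p_{j-1}(0),\beta_j\big)\,H_{j+1}$; explicitly, row $2m$ of $H_j$ equals row $2m+1$ of $H_{j+1}$, while row $2m-1$ of $H_j$ equals $p_{j-1}(0)$ times row $2m-1$ of $H_{j+1}$ plus $\beta_j$ times row $2m$ of $H_{j+1}$. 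In particular row $2m-1$ of $H_{j+1}$ coincides with row $2m-2$ of $H_j$.

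First I would reduce the left-hand side. Its rows $2,3,\dots,k+1$ form a contiguous block, so for each odd row $2m-1$ in this range the adjacent even row $2m-2$ is also present; subtracting $p_{j-1}(0)$ times the latter from the former replaces it by $\beta_j\cdot(\text{row }2m\text{ of }H_{j+1})$ without changing the determinant. Factoring one $\beta_j$ out of each of the $\lfloor k/2\rfloor$ odd rows in the block then turns the whole block into the consecutive rows $3,4,\dots,k+2$ of $H_{j+1}$, so that
\begin{equation*}
H_j\pmat 2&3&\hdots&k&k+1\\2&3&\hdots&k&i+1\epmat
=\beta_j^{\left\lfloor k/2\right\rfloor}\,
H_{j+1}\pmat 3&4&\hdots&k+1&k+2\\2&3&\hdots&k&i+1\epmat .
\end{equation*}

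Next I would observe that in this remaining minor the column $2$ has a single nonzero entry, namely the constant term $c^{(j)}_0=1$ of $p_j$ in its top row (row $3$ of $H_{j+1}$), all lower rows vanishing there. Expanding along that column deletes the row and the column and lowers the order by one, leaving the minor of $H_{j+1}$ on rows $4,5,\dots,k+2$ and columns $3,4,\dots,k,i+1$. Finally I would apply the shift symmetry $\big(H_{j+1}\big)_{r,c}=\big(H_{j+1}\big)_{r+2,\,c+1}$, which is immediate from the definition~\eqref{eq:Hpq_def}: decreasing every row index by $2$ and every column index by $1$ (all indices remaining positive because $i\ge k$) sends rows $4,\dots,k+2$ to $2,\dots,k$ and columns $3,\dots,k,i+1$ to $2,\dots,k-1,i$, leaving the value of the minor unchanged and producing exactly the right-hand side.

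The only delicate point is the index bookkeeping: verifying that the eliminating row operations never leave the chosen block, that the number of extracted factors $\beta_j$ equals $\lfloor k/2\rfloor$ for both parities of $k$ (a short count on $\{2,\dots,k+1\}$), and that the two reindexings --- one from the Laplace step, one from the shift symmetry --- compose to the claimed correspondence of minors. The structural inputs, namely the row relations and the shift symmetry, are routine consequences of~\eqref{eq:process1} and~\eqref{eq:Hpq_def}.
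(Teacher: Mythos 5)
Your proof is correct and is essentially the paper's own argument run in reverse: the paper starts from the $H_{j+1}$-minor, borders it with the unit column (rows and columns anchored at the top-left corner, using $a_0=1$), and re-adds $p_{j-1}(0)$-multiples of the $p_j$-rows to reconstruct the $H_j$-minor, whereas you subtract those multiples inside the $H_j$-minor, factor out $\beta_j^{\left\lfloor k/2\right\rfloor}$, and undo the bordering by a Laplace expansion along the single-entry column, with the shift symmetry $(H_{j+1})_{r,c}=(H_{j+1})_{r+2,c+1}$ playing the role of the paper's re-anchoring of indices. The row relations you use, equivalent to the factorization $H_j=J\big(p_{j-1}(0),\beta_j\big)H_{j+1}$ (which the paper itself makes explicit only later, in the proof of Lemma~\ref{lemma:dec_general}), are exactly the identities underlying the paper's determinant computation, so the two arguments coincide in substance.
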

\begin{proof}
    Without loss of generality we consider the case~$j=0$, since for higher values of $j$ the
    relations~\eqref{eq:process0}--\eqref{eq:process1} are analogous. In the case $k=2m$
    \begin{multline}
        \beta_0^{m} H_{1}\pmat
            2&3&\hdots&2m-1&2m\\2&3&\hdots&2m-1&i\epmat=
        \beta_0^{m} H_{1}\pmat
            1&2&\hdots&2m-1&2m\\1&2&\hdots&2m-1&i\epmat=\\
        \begin{vmatrix}
            a_0 &       a_1&       a_2 &\hdots&a_{2m-2}             & a_{i-1}\\
            0& b_1-b_0a_1&b_2-b_0a_2&\hdots&b_{2m-2}-b_0a_{2m-2}& b_{i-1}-b_0a_{i-1}\\
            0& a_0 &       a_1 &\hdots&a_{2m-3}             & a_{i-2}\\
            0& 0         &b_1-b_0a_1 &\hdots&b_{2m-3}-b_0a_{2m-3}& b_{i-2}-b_0a_{i-2}\\
            \vdots& \vdots &  \vdots  &\ddots&    \vdots &    \vdots &\\
            0& 0 & 0 &\hdots&b_{m-1}-b_0a_{m-1}& b_{i-m}-b_0a_{i-m}
        \end{vmatrix}=\\
        \begin{vmatrix}
            a_0 &a_1 &a_2&\hdots&a_{2m-2}             & a_{i-1}\\
            b_0&b_1&b_2&\hdots&b_{2m-2}& b_{i-1}\\
            0    &a_0&a_1&\hdots&a_{2m-3}             & a_{i-2}\\
            0    &b_0&b_1&\hdots&b_{2m-3}            & b_{i-2}\\
            \vdots & \vdots &  \vdots  &\ddots&    \vdots &    \vdots &\\
            0 &0 & 0 &\hdots&b_{m-1}& b_{i-m}
        \end{vmatrix}
        = H_0\pmat
            2&3&\hdots&2m&2m+1\\2&3&\hdots&2m&i+1
        \epmat,\nonumber
    \end{multline}
    here the equality $a_0=1$ has been used. For $k=2m-1$ the transformation remains the same.
\end{proof}
In particular, if we suppose that $\beta_0,\beta_1,\dots,\beta_{k-1}>0$ for $k\ge 3$, this lemma
implies
\begin{multline}\label{eq:H_jk_beta}
    H_j^{(k)}=\beta_{j}^{\left\lfloor\frac{k-1}{2}\right\rfloor}H_{j+1}^{(k-1)}=\\
    \beta_{j}^{\left\lfloor\frac{k-1}{2}\right\rfloor}\beta_{j+1}^{\left\lfloor\frac{k-2}{2}\right\rfloor}H_{j+2}^{(k-2)}
    =\dots =H_{k-3}^{(3)} \prod_{i=1}^{k-3}\beta_{i+j-1}^{\left\lfloor\frac{k-i}{2}\right\rfloor}
    =\prod_{i=1}^{k-2}\beta_{i+j-1}^{\left\lfloor\frac{k-i}{2}\right\rfloor}.
\end{multline}

The next theorem was established by Chebotarev, see~\cite{Chebotarev}
and~\cite[Ch.V~\S 1]{ChebMei}; see also the proof of M.~Schiffer and V.~Bargmann in~\cite[II.8]{Wigner}.
At the same time, it can be derived as a particular case from Nevanlinna's theory, see
\cite[Theorem~8]{AkhKrein}.
\begin{theorem}[\cite{Chebotarev,ChebMei,Wigner,AkhKrein}]\label{th:chebotarev}
    A real meromorphic function~$F(z)$ regular at the origin is an $\mathcal{R}$-function if and
    only if it has the form
    \begin{equation}\label{F_sum_form_R}
        \begin{gathered}
            F(z)=B_0 + B_1z + \sum_{1\le\nu\le\omega}\left(
                \frac {A_\nu}{z+\sigma_\nu}-\frac{A_\nu}{\sigma_\nu}
            \right),\\\text{ where }
            \sum_{1\le\nu\le\omega}\frac{|A_\nu|}{\sigma_\nu^2}<\infty,
            \ B_1>0 \text{ and } A_\nu<0,\ \sigma_\nu\in\mathbb R \text{ for } \nu=1,2\dots,\omega.
        \end{gathered}
    \end{equation}
\end{theorem}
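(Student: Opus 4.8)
The plan is to prove the two implications separately: the ``if'' part by a direct term-by-term computation of the imaginary part, and the ``only if'' part through the Herglotz--Nevanlinna integral representation specialised to the meromorphic case. For sufficiency, suppose $F$ has the form~\eqref{F_sum_form_R}. I would first note that the series defines a real meromorphic function: each summand equals $\frac{-A_\nu z}{\sigma_\nu(z+\sigma_\nu)}=O\!\left(|A_\nu|/\sigma_\nu^2\right)$ uniformly on compact sets avoiding the points $-\sigma_\nu$, so the hypothesis $\sum_\nu|A_\nu|/\sigma_\nu^2<\infty$ gives locally uniform convergence, and all coefficients being real yields $F(\bar z)=\overline{F(z)}$. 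Then for $\Im z>0$ one computes $\Im(B_0)=0$, $\Im(B_1 z)=B_1\Im z\ge 0$, and $\Im\frac{A_\nu}{z+\sigma_\nu}=\frac{-A_\nu\,\Im z}{|z+\sigma_\nu|^2}\ge 0$ because $A_\nu<0$, the subtracted quantities being real constants. Summation gives $\Im F(z)/\Im z\ge 0$, so $F\in\mathcal R$, and $F(0)=B_0$ is finite (here $\sigma_\nu\ne 0$), so $F$ is regular at the origin.

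For necessity, let $F$ be a real meromorphic $\mathcal R$-function regular at $0$. The first step is a local analysis of its poles. Since $F$ is analytic off the real axis, every pole lies on it; writing the principal part $A(z-z_0)^{-m}+\cdots$ at a real pole $z_0$ and letting $\arg(z-z_0)$ sweep $(0,\pi)$, the sign of $\Im F$ cannot stay nonnegative unless $m=1$ and $A<0$. Hence all poles are \emph{simple with negative residue}. The decisive second step is to represent $F$: since $\Im F$ is a nonnegative harmonic function on the upper half-plane, the Herglotz--Nevanlinna representation provides $B_1\ge 0$ and a nonnegative measure $\mu$ with $\int d\mu(t)/(1+t^2)<\infty$ such that $F(z)=B_0'+B_1 z+\int_{\mathbb R}\bigl(\frac{1}{t-z}-\frac{t}{1+t^2}\bigr)\,d\mu(t)$. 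I would then prove that $\mu$ is purely atomic and supported on the poles: across any open real interval containing no pole, $F$ continues analytically and is real, so its nontangential boundary imaginary part vanishes and Stieltjes inversion assigns zero $\mu$-mass there; consequently $\mu=\sum_\nu m_\nu\,\delta_{-\sigma_\nu}$ with $m_\nu>0$ equal to the residue magnitudes. Substituting these atoms, setting $m_\nu=-A_\nu=|A_\nu|$, and absorbing the convergent constant $\sum_\nu A_\nu\bigl(\frac{1}{\sigma_\nu}-\frac{\sigma_\nu}{1+\sigma_\nu^2}\bigr)$ into $B_0$, the integral collapses to the sum in~\eqref{F_sum_form_R}; finally $\int d\mu/(1+t^2)<\infty$ becomes $\sum_\nu|A_\nu|/\sigma_\nu^2<\infty$, since regularity at $0$ keeps the poles bounded away from the origin, making $1/\sigma_\nu^2$ and $1/(1+\sigma_\nu^2)$ comparable along the tail.

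The step I expect to be the main obstacle is showing that the Herglotz measure $\mu$ is \emph{purely atomic}, i.e.\ that it has neither an absolutely continuous nor a singular continuous part. This is exactly where meromorphy is used: one must convert the analyticity and realness of $F$ on each pole-free real interval into the vanishing of $\mu$ there via Stieltjes inversion, and then match the discrete atoms to the residues obtained in the local pole analysis. By contrast, the imaginary-part computation, the convergence of the series, and the bookkeeping of the convergence factors are routine once this reduction is in place.
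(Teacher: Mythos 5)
Your proof is correct, but note that the paper itself contains no proof of this theorem: it is quoted as a classical result, with pointers to Chebotarev's original argument \cite{Chebotarev,ChebMei}, to the Schiffer--Bargmann proof reproduced in \cite{Wigner}, and to the remark that it ``can be derived as a particular case from Nevanlinna's theory'' \cite{AkhKrein}. Your argument is precisely this last route: sufficiency by the term-by-term computation $\Im F(z)/\Im z = B_1 + \sum_\nu \frac{-A_\nu}{|z+\sigma_\nu|^2}$ (which is exactly how the paper exploits \eqref{F_sum_form_R} in its proof of Lemma~\ref{lemma:cheb1}), and necessity via the Herglotz--Nevanlinna representation together with Stieltjes inversion to show the representing measure is purely atomic, supported on the poles, with masses equal to the residue magnitudes from your local analysis. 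The steps you flag as delicate are all sound: realness and analyticity of $F$ across pole-free real intervals do annihilate $\mu$ there; the atoms match the residues; and since the poles of a meromorphic function are discrete and $F$ is regular at $0$, the quantities $1/\sigma_\nu^2$ and $1/(1+\sigma_\nu^2)$ are comparable, so $\int d\mu(t)/(1+t^2)<\infty$ becomes $\sum_\nu|A_\nu|/\sigma_\nu^2<\infty$ and the correction constants $A_\nu\bigl(\tfrac{1}{\sigma_\nu}-\tfrac{\sigma_\nu}{1+\sigma_\nu^2}\bigr)=\tfrac{A_\nu}{\sigma_\nu(1+\sigma_\nu^2)}$ sum absolutely. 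Compared with Chebotarev's more elementary exhaustion argument, your route is shorter but imports the integral representation as a black box; for this paper's purposes either suffices, since the theorem is only used through Lemma~\ref{lemma:cheb1} and Theorem~\ref{th:cheb_mei}.

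One discrepancy you should not gloss over: your necessity argument yields $B_1\ge 0$, whereas the statement as printed demands $B_1>0$. No proof can close this gap, because the strict inequality is false as stated: $F(z)=-1/(z+1)$ is a real meromorphic $\mathcal{R}$-function regular at the origin whose representation \eqref{F_sum_form_R} forces $B_1=0$; likewise a real constant, which Definition~\ref{def:R_R-1} admits into $\mathcal{R}$, needs $\omega=0$ and $B_1=0$. The classical statement (e.g.\ in Kac--Kre\u\i n) has $B_1\ge 0$, so the $B_1>0$ in the paper is evidently a misprint, and your $B_1\ge 0$ is the correct conclusion; with that emendation your proof is complete.
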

The proof of this theorem relies on the following fact which we will use later.
\begin{lemma}[{see {\em e.g.}~\cite[Ch.VI~\S 8]{ChebMei}}]\label{lemma:cheb1}
    Let entire functions $q(z)$ and $p(z)$ have no common zeros and such that
    $F=\frac{q}{p}\in\mathcal{R}$ is not a constant. Then $F'(z)>0$ on the real line, the zeros
    of $p(z)$ and $q(z)$ are real, simple and interlacing.
\end{lemma}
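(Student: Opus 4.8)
The plan is to deduce everything from the Herglotz--Nevanlinna mapping property of $\mathcal{R}$-functions, keeping the argument independent of Theorem~\ref{th:chebotarev} (whose own proof rests on this lemma; the representation~\eqref{F_sum_form_R} would give the conclusion at once, but that route is circular here). First I would record what is immediate from Definition~\ref{def:R_R-1}. Since $q$ and $p$ have \emph{no} common zeros, the poles of $F=\frac{q}{p}$ are exactly the zeros of $p$, and the zeros of $F$ are exactly the zeros of $q$. As $F\in\mathcal{R}$ is by definition holomorphic on $\mathbb{C}\setminus\mathbb{R}$, it has no poles off the real axis, so every zero of $p$ is real. For the zeros of $q$ I would use that $\Im F$ is harmonic and nonnegative on the open upper half-plane; were it to vanish at an interior point, the minimum principle would force $\Im F\equiv 0$ and hence $F$ constant, contrary to hypothesis. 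Thus $\Im F>0$ on the upper half-plane, and by $F(\overline z)=\overline{F(z)}$ also $\Im F<0$ on the lower half-plane, so $F$ omits the value $0$ off $\mathbb{R}$ and every zero of $q$ is real as well.

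Next I would establish $F'>0$ together with simplicity. At a real point $x_0$ that is not a pole, $F$ is real-analytic, so $F(z)=F(x_0)+c_m(z-x_0)^m+\cdots$ with real $c_m\ne 0$ and $m\ge 1$. Writing $z=x_0+\varepsilon e^{i\theta}$ and letting $\varepsilon\to 0^+$ gives $\Im F\sim c_m\varepsilon^m\sin(m\theta)$; since $\Im F>0$ for every $\theta\in(0,\pi)$ while $\sin(m\theta)$ changes sign on $(0,\pi)$ as soon as $m\ge 2$, we must have $m=1$, and then $c_1\sin\theta>0$ forces $c_1>0$. Hence $F'(x_0)=c_1>0$ at every regular real point, and any real zero of $F$ (that is, of $q$) is simple. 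For the poles I would apply the same reasoning to $G\coloneqq-\frac{1}{F}$, which is again a non-constant $\mathcal{R}$-function because $\frac{\Im G}{\Im z}=\frac{1}{|F|^2}\cdot\frac{\Im F}{\Im z}\ge 0$; its zeros are precisely the poles of $F$, so the latter are simple and the zeros of $p$ are simple.

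It remains to pin down the interlacing. If $x_0$ is a pole of $F$ with $F(z)\sim \frac{r}{z-x_0}$, then near $x_0$ one has $G=-\frac{1}{F}\sim -\frac{z-x_0}{r}$, whence $G'(x_0)=-\frac{1}{r}$; applying $G'>0$ at the regular point $x_0$ of $G$ yields $r<0$. Consequently, as $z$ approaches a pole from the left $F\to+\infty$, and from the right $F\to-\infty$. On each open interval bounded by two consecutive poles $F$ is real-analytic with $F'>0$, hence increases strictly from $-\infty$ to $+\infty$ and vanishes exactly once; conversely, between two consecutive zeros the increasing function $F$ must pass through $\infty$, that is, through a pole. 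Therefore the zeros of $q$ and the zeros of $p$ strictly alternate along the real axis, which is the asserted interlacing.

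The routine parts are the local expansions; the step that needs the most care is the global bookkeeping of the interlacing when $F$ has infinitely many zeros and poles (the case of an infinite family in~\eqref{F_sum_form_R}). There zeros and poles accumulate only at infinity, and one has to handle the two unbounded end-intervals and argue that no zero or pole is ``missed'', so that the alternation is genuinely one-to-one. The residue sign $r<0$ is exactly what excludes a zero-free or pole-free gap and makes the counting exact, so I expect the main obstacle to be assembling these monotonicity and sign facts into a clean statement of strict interlacing rather than any single computation.
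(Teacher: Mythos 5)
Your proof is correct, but it takes the opposite route from the paper's. The paper deliberately deduces the lemma \emph{from} the partial fraction expansion~\eqref{F_sum_form_R}: once $F$ has that form, the identities $\frac{\Im F(z)}{\Im z}=B_1+\sum_\nu\frac{-A_\nu}{|z+\sigma_\nu|^2}>0$ and $F'(x)=B_1+\sum_\nu\frac{-A_\nu}{(x+\sigma_\nu)^2}>0$ immediately give reality of the zeros, $F'>0$, and then the interlacing by the same monotonicity argument you use at the end. This is not actually circular within the paper, because Theorem~\ref{th:chebotarev} is imported as a known result (among the citations is a derivation via Nevanlinna's theory); the paper itself remarks that the classical proof of that theorem relies on the lemma, which is exactly why it offers the expansion-based derivation ``for completeness.'' Your argument, by contrast, is self-contained from Definition~\ref{def:R_R-1}: strict positivity of $\Im F$ in the upper half-plane via the minimum principle for harmonic functions, the local expansion $F(z)=F(x_0)+c_m(z-x_0)^m+\cdots$ forcing $m=1$ and $c_1>0$, the passage to $G=-1/F\in\mathcal{R}$ (legitimate once the zeros of $F$ are known to be real, which you establish first) for simplicity of poles and negativity of residues, and then monotonicity between poles. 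This is essentially the classical ``behaviour in neighbourhoods of zeros and poles'' proof from~\cite{ChebMei} that the paper explicitly mentions and sidesteps; what it buys is independence from the representation theorem, so the lemma can serve, as it classically does, as an ingredient in proving that theorem, at the cost of more local analysis than the paper's three-line computation. All your steps check out, including $\Im(-1/F)=\Im F/|F|^2$ and the residue sign $r=-1/G'(x_0)<0$; your closing worry about global bookkeeping is unnecessary, since zeros and poles of the coprime entire functions are isolated (so ``consecutive'' is well defined), the interlacing assertion concerns only consecutive pairs, and uniqueness of the pole between two consecutive zeros follows because two poles there would trap a zero strictly between them.
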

The interlacing property means that between each two consequent zeros of $p(z)$ there exists a
unique root of $q(z)$ and \emph{vice versa}. The proof from \cite{ChebMei} is based on the
behaviour of meromorphic $\mathcal{R}$-functions in neighbourhoods of its zeros and poles. For
completeness, we deduce this lemma here from the partial fraction
expansion~\eqref{F_sum_form_R}.
\begin{proof}
    Let $F(z)=\frac{q(z)}{p(z)}$ have the form~\eqref{F_sum_form_R}. If $z$ is not real, then
    \[
    \frac{\Im F(z)}{\Im z} = B_1 + \sum_{1\le\nu\le\omega}\frac {-A_\nu}{|z+\sigma_\nu|^2} > 0.
    \]
    Therefore, $F(z)$ (as well as $q(z)$) has no zeros outside the real axis.

    Now from \eqref{F_sum_form_R} it follows that $F(z)$ is real and can only have simple poles.
    Since
    \[
    F'(z)=B_1 + \sum_{1\le\nu\le\omega}\frac {-A_\nu}{(z+\sigma_\nu)^2}>0,\quad z\in\mathbb{R},
    \]
    the function $F(z)$ grows between any of its two subsequent poles $z_1$ and~$z_2$ from
    $-\infty$ to $+\infty$. So there is one and only one $z_*\in(z_1,z_2)$ such that
    $F(z_*)=q(z_*)=0$. For the same reason, there exists a unique zero of~$p(z)$ between any two
    subsequent zeros of $q(z)$.
\end{proof}

The next theorem is a consequence of Grommer's theorem (see~\cite[\S 14, Satz III]{Grommer}) and
Theorem~\ref{th:chebotarev}. It can be proved by applying the Hurwitz transformation
\cite{Hurwitz} (see also~\cite[\S 6.1]{AkhKrein}, \cite[Ch.I~\S 7]{ChebMei},
\cite[Theorem~1.5]{HoltzTyaglov}, \cite{Gantmakher}) to the matrices of the Hankel forms
corresponding to~$F(z)$.
\begin{theorem}[{\emph{e.g.}~\cite[Ch.V~\S 3]{ChebMei}}]\label{th:cheb_mei}
    A meromorphic function~$F(z)=\frac{q(z)}{p(z)}$, where $p(z)$ and $q(z)$ are of the
    form~\eqref{eq:pq_exp}, is an $\mathcal R$-function if and only if there exists $l$,
    $0\le l\le\infty$, such that
    \[
    H^{(2m+1)}>0,\ m=1,2,\dots,l,\ 
    H^{(2l+3)}=H^{(2l+5)}=\dots=0.
    \]
    Moreover, $l$ is finite if and only if~$F(z)$ is a rational function with exactly $l$~poles,
    counting a pole at infinity (if exists).
\end{theorem}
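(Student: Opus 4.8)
The plan is to derive Theorem~\ref{th:cheb_mei} by combining the partial-fraction characterisation of $\mathcal{R}$-functions from Theorem~\ref{th:chebotarev} with the classical Hankel/Hurwitz correspondence, exactly as the paragraph preceding the statement suggests. First I would associate to $F(z)=\frac{q(z)}{p(z)}$ its Laurent expansion at infinity (or, after the Hurwitz transformation, a sequence of Hankel determinants built from the coefficients $a_k,b_k$). The key algebraic input is that the odd-indexed minors $H^{(2m+1)}$ of the Hurwitz-type matrix $H(p,q)$ coincide, up to positive factors, with the leading Hankel minors of the quadratic form attached to $F$. This is precisely what the Hurwitz transformation accomplishes, and I would cite \cite{Hurwitz} together with \cite[Theorem~1.5]{HoltzTyaglov} for the explicit identity $H^{(2m+1)} = (\text{positive constant})\cdot D_m$, where $D_m$ is the $m$th principal Hankel determinant.

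Second, I would invoke Grommer's theorem in the form \cite[\S 14, Satz~III]{Grommer}: a real meromorphic function regular at the origin belongs to $\mathcal{R}$ if and only if all the Hankel forms arising from its expansion are nonnegative, which translates into the condition that the principal Hankel minors $D_1,D_2,\dots,D_l$ are strictly positive and vanish from some point on, i.e. $D_{l+1}=D_{l+2}=\dots=0$. Transporting this through the minor identity of the previous step yields exactly $H^{(2m+1)}>0$ for $m=1,\dots,l$ and $H^{(2l+3)}=H^{(2l+5)}=\dots=0$. The even-indexed minors play no role here because the Hankel structure only constrains the determinants indexed by the order of the form, which under the Hurwitz correspondence are the odd-sized leading minors $H^{(2m+1)}$.

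For the ``moreover'' clause about finiteness of $l$, I would argue that the Hankel form has finite rank $l$ precisely when the underlying measure in the Nevanlinna/Chebotarev representation~\eqref{F_sum_form_R} is a finite sum of point masses, i.e. when $\omega<\infty$ and $B_1$ contributes the pole at infinity. By Theorem~\ref{th:chebotarev}, a rational $\mathcal{R}$-function has the form $B_0+B_1z+\sum_{\nu=1}^{\omega}(\frac{A_\nu}{z+\sigma_\nu}-\frac{A_\nu}{\sigma_\nu})$ with finitely many terms, and the total number of poles (counting the pole at infinity when $B_1>0$) equals the rank $l$ of the Hankel form. Conversely, if $l=\infty$ the form has infinite rank and $F$ cannot be rational. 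I would make this count explicit by matching each simple pole $-\sigma_\nu$ and the possible pole at infinity to a jump in the rank of the truncated Hankel matrices.

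The main obstacle I expect is bookkeeping the exact positive proportionality constants and sign conventions in the Hurwitz transformation, so that the strict positivity of Hankel minors transfers to the strict positivity of the \emph{odd} minors $H^{(2m+1)}$ rather than to some shifted or rescaled family; the paper's own notation $A^{(k)}=A\bigl(\begin{smallmatrix}2&\cdots&k\\2&\cdots&k\end{smallmatrix}\bigr)$ already discards the first row and column, and I would need to verify that this trimming is compatible with the normalisation $a_0=1$ used throughout. Because the statement is attributed to \cite[Ch.V~\S 3]{ChebMei} and the transformation to several standard references, the cleanest route is to reduce everything to those cited results rather than recompute the Hankel minors from scratch; the genuine work is checking that the index alignment ($2m+1$ versus $m$, and the first vanishing at $2l+3$) matches the classical rank condition.
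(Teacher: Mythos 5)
The paper does not actually prove Theorem~\ref{th:cheb_mei}: it cites \cite[Ch.V~\S 3]{ChebMei} and merely sketches the very route you propose, namely deducing the statement from Grommer's theorem \cite[\S 14, Satz III]{Grommer} together with Theorem~\ref{th:chebotarev} by applying the Hurwitz transformation to the matrices of the Hankel forms attached to $F$. Your proposal, including the identification of the odd minors $H^{(2m+1)}$ with the leading Hankel determinants and the rank/pole-count argument for the finiteness of $l$, therefore follows essentially the same approach as the paper, with only the index bookkeeping you already flag left to the cited classical references.
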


Let $\beta_i\ne 0$, $i=0,1,\dots,j$ for some nonnegative $j$, and the power series~$p_{j-1}(z)$,
$p_{j}(z)$ and $p_{j+1}(z)$ be defined by the recurrence formula~\eqref{eq:process1}. Suppose
that the ratio $F_j(z)=\frac{p_{j-1}(z)}{p_j(z)}$ of formal power series converges to
\emph{a meromorphic function}. Then there exist entire functions $\tilde{p}_{j-1}(z)$ and
$\tilde{p}_j(z)$ with no common zeros such that
\[
F_j(z)=\frac{\tilde{p}_{j-1}(z)}{\tilde{p}_j(z)},\quad \tilde{p}_{j-1}(0)=p_{j-1}(0)\quad
\text{and}\quad \tilde{p}_{j}(0)=p_{j}(0)=1.
\]
Define the power series~$g(z)\coloneqq\frac{p_j(z)}{\tilde{p}_j(z)}$ satisfying $g(0)=1$. Then
\[
\quad\tilde{p}_j(z)=\frac{p_j(z)}{g(z)} \quad\text{and}\quad
\tilde{p}_{j-1}(z)=\frac{p_{j-1}(z)}{g(z)}.
\]
If $F_j\in\mathcal{R}$ is a non-constant function, then by Theorem~\ref{th:cheb_mei} the
inequality $\beta_j=\displaystyle H_j^{(3)}>0$ is satisfied. So from the
formula~\eqref{eq:process1} we find
\[
F_{j+1}(z)\coloneqq\frac{p_j(z)}{p_{j+1}(z)} =\dfrac{\beta_{j}z}{F_{j}(z)-p_{j-1}(0)}.
\]
\begin{lemma}\label{lemma:p_gcd}
    The ratio $\frac{p_{j+1}(z)}{g(z)}$ converges to the entire function
    \begin{equation}\label{eq:proc1_lt}
        \tilde{p}_{j+1}(z)\coloneqq
        \frac{\tilde{p}_{j-1}(z)-\tilde{p}_{j-1}(0)\,\tilde{p}_j(z)}{\beta_j z}.
    \end{equation}
    The pairs $(\tilde{p}_j(z),\tilde{p}_{j+1}(z))$ and
    $(\tilde{p}_{j-1}(z),\tilde{p}_{j+1}(z))$ have no common zeros.
\end{lemma}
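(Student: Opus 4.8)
The plan is to read the formula for $\tilde{p}_{j+1}(z)$ off the defining recurrence~\eqref{eq:process1} by dividing it through by $g(z)$, and then to extract both coprimality statements from the resulting three-term relation, which behaves like a single step of the Euclidean algorithm.

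First I would divide~\eqref{eq:process1} by $g(z)$. Since $g$ is a power series with $g(0)=1$ it is invertible in the ring of formal power series, and the recorded identities $\tilde{p}_{j}=p_j/g$ and $\tilde{p}_{j-1}=p_{j-1}/g$ give
\[
\frac{p_{j+1}(z)}{g(z)}=\frac{p_{j-1}(z)/g(z)-p_{j-1}(0)\,p_j(z)/g(z)}{\beta_j z}=\frac{\tilde{p}_{j-1}(z)-p_{j-1}(0)\,\tilde{p}_j(z)}{\beta_j z}.
\]
Using $p_{j-1}(0)=\tilde{p}_{j-1}(0)$ this is exactly the right-hand side of~\eqref{eq:proc1_lt}. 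To check that $\tilde{p}_{j+1}$ is genuinely entire (and not merely meromorphic) I would verify that its numerator vanishes at the origin: its value there is $\tilde{p}_{j-1}(0)-\tilde{p}_{j-1}(0)\,\tilde{p}_j(0)=\tilde{p}_{j-1}(0)\bigl(1-\tilde{p}_j(0)\bigr)=0$, since $\tilde{p}_j(0)=1$. Hence the numerator, an entire function, is divisible by $z$, so $\tilde{p}_{j+1}$ is entire. The displayed identity then holds as an identity of formal power series, and since its right-hand side is the everywhere-convergent Taylor expansion of the entire function $\tilde{p}_{j+1}$, the series $p_{j+1}/g$ converges to $\tilde{p}_{j+1}$.

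For the coprimality I would rewrite~\eqref{eq:proc1_lt} as
\[
\beta_j z\,\tilde{p}_{j+1}(z)=\tilde{p}_{j-1}(z)-\tilde{p}_{j-1}(0)\,\tilde{p}_j(z).
\]
For the pair $(\tilde{p}_j,\tilde{p}_{j+1})$: a common zero $z_0\ne 0$ would force $\tilde{p}_{j-1}(z_0)=0$ by this relation, contradicting that $\tilde{p}_{j-1}$ and $\tilde{p}_j$ share no zeros, while $z_0=0$ is impossible because $\tilde{p}_j(0)=1$. For the pair $(\tilde{p}_{j-1},\tilde{p}_{j+1})$: a common zero $z_0\ne 0$ gives $\tilde{p}_{j-1}(0)\,\tilde{p}_j(z_0)=0$, hence $\tilde{p}_j(z_0)=0$ whenever $\tilde{p}_{j-1}(0)\ne 0$, again contradicting coprimality of $(\tilde{p}_{j-1},\tilde{p}_j)$; and $z_0=0$ is not a zero of $\tilde{p}_{j-1}$ because $\tilde{p}_{j-1}(0)\ne 0$.

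The step to watch is the hypothesis $\tilde{p}_{j-1}(0)\ne 0$ used for the second pair. This is automatic for $j\ge 1$, where $\tilde{p}_{j-1}(0)=p_{j-1}(0)=1$, and it also holds at $j=0$ as soon as $b_0=q(0)>0$. The genuinely delicate case is $j=0$ with $b_0=0$: then $\tilde{p}_{-1}(0)=0$, the relation degenerates to $\tilde{p}_1=\tilde{p}_{-1}/(\beta_0 z)$, and $\tilde{p}_{-1}$ and $\tilde{p}_1$ do share every nonzero zero of $\tilde{p}_{-1}$. I expect this to be the one obstacle, and I would dispose of it by invoking the trimming convention from the remark following Theorem~\ref{th:main1} (removing the trivial first row and column, equivalently replacing $J(0,\beta_0)$ by $\operatorname{diag}(1,\beta_0,1,\beta_0,\dots)$), which reduces the $b_0=0$ situation to one in which the leading coefficient is again nonzero.
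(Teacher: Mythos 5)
Your proof is correct and follows the paper's own argument exactly: divide \eqref{eq:process1} by $g(z)$ to obtain \eqref{eq:proc1_lt}, then extract both coprimality claims from the rearranged three-term relation $\tilde{p}_{j-1}(z)=\beta_j z\,\tilde{p}_{j+1}(z)+\tilde{p}_{j-1}(0)\,\tilde{p}_j(z)$, which the paper compresses into ``each common zero of any two summands must be a zero of the third.'' You are in fact more careful than the paper on the one delicate point: when $\tilde{p}_{j-1}(0)=0$ (possible only for $j=0$ with $b_0=0$) the summand $\tilde{p}_{j-1}(0)\,\tilde{p}_j$ vanishes identically, the paper's argument yields nothing for the pair $(\tilde{p}_{j-1},\tilde{p}_{j+1})$, and that claim can genuinely fail --- take $q(z)=z(1+z)$, $p(z)=1+2z$, so that $F=q/p\in\mathcal{S}$, $g\equiv 1$, $\beta_0=1$, and $\tilde{p}_1(z)=1+z$ shares the zero $-1$ with $\tilde{p}_{-1}(z)=z(1+z)$. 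Your trimming remedy repairs the surrounding development rather than the lemma's literal statement in that corner case, but this is harmless: only the coprimality of the first pair $(\tilde{p}_j,\tilde{p}_{j+1})$ --- which your argument, like the paper's, establishes unconditionally --- is used downstream, namely to write $F_{j+1}=\tilde{p}_j/\tilde{p}_{j+1}$ and to run the iteration in Lemma~\ref{lemma:decomp_decrease}, where every relevant constant term $\tilde{p}_{j-1}(0)$ equals $1$.
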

\begin{proof}
    Dividing~\eqref{eq:process1} by $g(z)$ gives $\frac{p_{j+1}(z)}{g(z)}=\tilde{p}_{j+1}(z)$,
    that means the relation~\eqref{eq:proc1_lt} holds. Consequently,
    \begin{equation*}
        \tilde{p}_{j-1}(z)=\beta_{j} z\,\tilde{p}_{j+1}(z) + \tilde{p}_{j-1}(0)\,\tilde{p}_{j}(z).
    \end{equation*}
    Each common zero of any two summands in this equation must be a zero of the third
    summand. Since the functions $\tilde{p}_{j-1}(z)$ and $\tilde{p}_j(z)$ have no common zeros,
    the pairs $(\tilde{p}_{j-1}(z),\tilde{p}_{j+1}(z))$ and
    $(\tilde{p}_j(z),\tilde{p}_{j+1}(z))$ also have no common zeros.
\end{proof}
This lemma implies that $F_{j+1}(z)$ represents the meromorphic function
\[F_{j+1}(z)=\dfrac{\tilde{p}_j(z)}{\tilde{p}_{j+1}(z)}.\]

\begin{lemma}\label{lemma:FS_FRR}
    If the meromorphic function $F_j(z)$ is not a constant, then $F_j\in\mathcal{S}$ if and only
    if~$F_j,F_{j+1}\in\mathcal{R}$ and $F_j(0)\ge 0$.
\end{lemma}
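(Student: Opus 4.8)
The plan is to read off everything from the relation between $F_j$ and $F_{j+1}$ recorded just above the lemma. Since $\tilde p_j(0)=1$ we have $p_{j-1}(0)=F_j(0)$, so that relation becomes
\[
F_{j+1}(z)=\frac{\beta_j z}{F_j(z)-F_j(0)}.
\]
In both directions of the asserted equivalence $F_j$ is a non-constant $\mathcal R$-function, hence $\beta_j=H_j^{(3)}>0$ by Theorem~\ref{th:cheb_mei}. I would then introduce the auxiliary meromorphic function $\Phi(z):=\frac{F_j(z)-F_j(0)}{z}$, which is regular at the origin and satisfies $F_{j+1}=\beta_j/\Phi$. The first step is the elementary reciprocal rule $\psi\in\mathcal R\iff 1/\psi\in\mathcal R^{-1}$ (immediate from $\Im(1/\psi)/\Im z=-(\Im\psi/\Im z)/|\psi|^2$); combined with $\beta_j>0$ it yields the pivotal equivalence $F_{j+1}\in\mathcal R\iff\Phi\in\mathcal R^{-1}$. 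Thus the whole lemma reduces, for non-constant $F_j\in\mathcal R$, to the claim that $F_j\in\mathcal S$ is equivalent to ``$\Phi\in\mathcal R^{-1}$ and $F_j(0)\ge0$''.

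To handle this reduced claim I would invoke the Chebotarev representation~\eqref{F_sum_form_R} of $F_j$, with $A_\nu<0$ and poles at $-\sigma_\nu$. Using $\frac{1}{z+\sigma_\nu}-\frac{1}{\sigma_\nu}=\frac{-z}{\sigma_\nu(z+\sigma_\nu)}$ one computes
\[
\Phi(z)=B_1+\sum_{\nu}\frac{-A_\nu/\sigma_\nu}{z+\sigma_\nu},
\]
so $\Phi$ has exactly the poles of $F_j$, with residues $-A_\nu/\sigma_\nu$. The key observation is a residue-sign characterization: a real meromorphic function of this shape lies in $\mathcal R^{-1}$ if and only if all of its residues are positive. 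Indeed, if they are positive then $\frac{\Im\Phi}{\Im z}=-\sum_\nu\frac{-A_\nu/\sigma_\nu}{|z+\sigma_\nu|^2}<0$; conversely $\Phi\in\mathcal R^{-1}$ means $-\Phi\in\mathcal R$, and by Theorem~\ref{th:chebotarev} every $\mathcal R$-function has strictly negative residues, which forces the residues of $\Phi$ to be positive. Since $A_\nu<0$, the residue $-A_\nu/\sigma_\nu$ is positive precisely when $\sigma_\nu>0$, i.e.\ when the corresponding pole $-\sigma_\nu$ of $F_j$ is negative. Hence, for non-constant $F_j\in\mathcal R$, the condition $\Phi\in\mathcal R^{-1}$ is equivalent to all poles of $F_j$ being negative, that is, to $F_j$ being regular on $[0,\infty)$.

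It then remains to connect regularity on $[0,\infty)$ with full $\mathcal S$-membership. By definition $F_j\in\mathcal S$ means that $F_j\in\mathcal R$ is regular and nonnegative on $[0,\infty)$. On one hand, $F_j\in\mathcal S$ forces its poles to be negative (so $\Phi\in\mathcal R^{-1}$) and gives $F_j(0)\ge0$. On the other hand, if $\Phi\in\mathcal R^{-1}$ and $F_j(0)\ge0$, then $F_j$ is regular on $[0,\infty)$; moreover $F_j$ is strictly increasing there, because $F_j'>0$ on the real axis by Lemma~\ref{lemma:cheb1} applied to the coprime pair $\tilde p_{j-1},\tilde p_j$, whence $F_j(x)\ge F_j(0)\ge0$ for $x\ge0$. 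This yields $F_j\in\mathcal S$ and closes the chain of equivalences
\[
F_j\in\mathcal S\iff\big(\Phi\in\mathcal R^{-1}\text{ and }F_j(0)\ge0\big)\iff\big(F_{j+1}\in\mathcal R\text{ and }F_j(0)\ge0\big),
\]
all under the standing hypothesis $F_j\in\mathcal R$.

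I expect the main obstacle to be the middle step: translating the analytic condition $F_{j+1}\in\mathcal R$ into the geometric statement that the poles of $F_j$ lie on the negative axis. This hinges on the residue-sign characterization of $\mathcal R^{-1}$ and on correctly matching the poles and residues of $\Phi$ with those of $F_j$ through the partial-fraction identity; the reciprocal rule and the monotonicity argument are then routine. A minor point to verify carefully is that $\Phi$ inherits no spurious pole at the origin (it does not, since $F_j(z)-F_j(0)$ vanishes at $z=0$) and that the non-constancy of $F_j$ is genuinely used, to guarantee both $\beta_j>0$ and $F_j'>0$.
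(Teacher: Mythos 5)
Your proposal is correct and follows essentially the same route as the paper's proof: the paper works with $G_j(z)=\frac{F_j(-z)-F_j(0)}{-z}\in\mathcal{R}$, which is exactly your $\Phi(-z)$, and likewise combines $\beta_j>0$ (from Theorem~\ref{th:cheb_mei}), the reciprocal/sign rules for $\mathcal{R}$ and $\mathcal{R}^{-1}$, the Chebotarev representation~\eqref{F_sum_form_R}, and the negative-residue property of $\mathcal{R}$-functions to force $\sigma_\nu>0$. The only differences are cosmetic: your explicit closing monotonicity step ($F_j'>0$, hence $F_j(x)\ge F_j(0)\ge 0$) is left implicit in the paper, and your strict inequality $\Im\Phi/\Im z<0$ should read $\le 0$ when $F_j$ has no poles, which is harmless since constants belong to $\mathcal{R}^{-1}$ by Definition~\ref{def:R_R-1}.
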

\begin{proof}
    Let $F_j\in\mathcal{S}$, then Theorem~\ref{th:chebotarev} gives that it has the form
    \begin{gather*}
        F_j(z)=B_0 + B_1z + \sum_{1\le\nu\le\omega}\left(
            \frac {A_\nu}{z+\sigma_\nu}-\frac{A_\nu}{\sigma_\nu}
        \right) = B_0 + B_1z + z\sum_{1\le\nu\le\omega}
        \frac {(-A_\nu/\sigma_\nu)}{z+\sigma_\nu},\\\text{ where }
        \sum_{1\le\nu\le\omega}\frac{|A_\nu|}{\sigma_\nu^2}<\infty,
        \ B_0\ge 0,\ B_1>0 \text{ and } A_\nu<0,\ \sigma_\nu>0 \text{ for } \nu=1,2\dots,\omega.
    \end{gather*}
    It is enough to show that $F_{j+1}(z)$ is a well-defined $\mathcal{R}$-function. Consider the
    function
    \begin{equation*}
        G_j(z)\coloneqq\dfrac{F_j(-z)-F_j(0)}{-z}=
        B_1 + \sum_{1\le\nu\le\omega} \frac {(-A_\nu/\sigma_\nu)}{-z+\sigma_\nu}
        = B_1 + \sum_{1\le\nu\le\omega}\frac{(A_\nu/\sigma_\nu)}{z-\sigma_\nu}.
    \end{equation*}
    It has the form~\eqref{F_sum_form_R} and, hence, is a meromorphic $\mathcal{R}$-function by
    Theorem~\ref{th:chebotarev}.

    The mappings $z\mapsto \frac 1{z}$ and $z\mapsto -z$ are in the class~$\mathcal{R}^{-1}$
    ({\em i.e.} they map the upper half of the complex plane into the lower half of the complex
    plane). Since $\beta_j=H_j^{(3)}>0$ and $G_j\in\mathcal{R}$, the function composition
    \[
    \left(\frac{\beta_j}{\makebox[1em]{$\cdot$}}\circ G_j\circ(-\makebox[1em]{$\cdot$})\right)(z)
    = \frac{\beta_j}{G_j(-z)}=\dfrac{\beta_{j}z}{F_{j}(z)-p_{j-1}(0)}=F_{j+1}(z)
    \]
    is an $\mathcal{R}$-function as well.

    Conversely, let $F_j,F_{j+1}\in\mathcal{R}$ and $F_j(0)\ge 0$. The inequality
    $\beta_j>0$ holds, therefore $F_{j+1}(z)\not\equiv 0$ and the meromorphic function
    \[
    G_j(z)\coloneqq\frac{\beta_j}{F_{j+1}(-z)}=\dfrac{F_j(-z)-F_j(0)}{-z}
    \]
    is an $\mathcal{R}$-function. On one hand, Theorem~\ref{th:chebotarev} gives
    \begin{gather*}
        F_j(z)= B_0 + B_1z + z\sum_{1\le\nu\le\omega}
        \frac {(-A_\nu/\sigma_\nu)}{z+\sigma_\nu},\text{ where}\\
        \sum_{1\le\nu\le\omega}\frac{|A_\nu|}{\sigma_\nu^2}<\infty,
        \ B_1>0 \text{ and } A_\nu<0,\ \sigma_\nu\in\mathbb{R}
        \text{ for } \nu=1,2\dots,\omega,\\
    \end{gather*}
    such that
    \[
    G_j(z)=\dfrac{F_j(-z)-F_j(0)}{-z}
    = B_1 + \sum_{1\le\nu\le\omega} \frac {(A_\nu/\sigma_\nu)}{z-\sigma_\nu}.
    \]
    On the other hand, Theorem~\ref{th:chebotarev} states that each $\mathcal{R}$-function has
    negative residues at its poles. That is, $\frac{A_\nu}{\sigma_\nu}<0$ for all $\nu$ since
    $G_j\in\mathcal{R}$. Therefore, the poles $-\sigma_\nu$, $\nu=1,2\dots,\omega$, of the
    function $F_j$ are negative.  Consequently,~$F_j\in\mathcal{S}$.
\end{proof}

\begin{corollary}\label{cr:FjS_i_Fj+1S}
    Suppose that for some $j\ge 0$ the function $F_j(z)$ is in the class~$\mathcal{S}$.
    Then~$\beta_j\ge 0$. The inequality~$\beta_j>0$ implies $F_{j+1}\in\mathcal{S}$, while the
    equality~$\beta_j=0$ implies that $F_j(z)$ is constant.
\end{corollary}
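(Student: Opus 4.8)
The plan is to read off all three assertions from the minor criterion of Theorem~\ref{th:cheb_mei}, invoking Lemma~\ref{lemma:FS_FRR} only at the step that manufactures the next $\mathcal{S}$-function. Since $\mathcal{S}\subseteq\mathcal{R}$ and $F_j$ is meromorphic, I would first apply Theorem~\ref{th:cheb_mei} to the ratio $F_j=\frac{p_{j-1}}{p_j}$, whose denominator satisfies $p_j(0)=1$ and whose numerator satisfies $p_{j-1}(0)\ge 0$, so that it is of the form~\eqref{eq:pq_exp} with $H_j=H(p_j,p_{j-1})$. The theorem supplies an index $l$, $0\le l\le\infty$, with $H_j^{(2m+1)}>0$ for $1\le m\le l$ and $H_j^{(2l+3)}=H_j^{(2l+5)}=\dots=0$. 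By~\eqref{eq:process0} one has $\beta_j=H_j^{(3)}=H_j^{(2\cdot 1+1)}$, which is strictly positive when $l\ge 1$ and vanishes when $l=0$; in either case $\beta_j\ge 0$, which is the first assertion.

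For the implication $\beta_j=0\Rightarrow F_j$ constant I would argue inside the same criterion: $\beta_j=H_j^{(3)}=0$ forces $l=0$, since $l\ge 1$ would give $H_j^{(3)}>0$. By the concluding statement of Theorem~\ref{th:cheb_mei}, $l=0$ means that $F_j$ is rational with exactly $0$ poles, counting a pole at infinity; having no poles at all, it must be constant.

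It remains to treat $\beta_j>0$, where now $l\ge 1$, so $F_j$ is non-constant and Lemma~\ref{lemma:FS_FRR} applies. From $F_j\in\mathcal{S}$ it yields $F_{j+1}\in\mathcal{R}$ together with the explicit expression $F_{j+1}(z)=\frac{\beta_j}{G_j(-z)}$, where, via Theorem~\ref{th:chebotarev},
\[
G_j(-z)=B_1+\sum_{1\le\nu\le\omega}\frac{-A_\nu/\sigma_\nu}{z+\sigma_\nu},
\qquad B_1\ge 0,\quad A_\nu<0,\quad \sigma_\nu>0 .
\]
The decisive point, and the only genuine obstacle, is upgrading membership in $\mathcal{R}$ to membership in $\mathcal{S}$, that is, checking that $F_{j+1}$ is regular and nonnegative on $[0,\infty)$. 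This I would read off the displayed partial fraction: for $z\ge 0$ each summand $\frac{-A_\nu/\sigma_\nu}{z+\sigma_\nu}$ is strictly positive, because $-A_\nu>0$ and $z+\sigma_\nu>0$, while non-constancy of $F_j$ forces $B_1>0$ or $\omega\ge 1$ and thereby $G_j(-z)>0$ throughout $[0,\infty)$. Hence $F_{j+1}(z)=\beta_j/G_j(-z)$ is finite and positive there, and being already an $\mathcal{R}$-function it lies in $\mathcal{S}$. The care required is exactly to exclude a vanishing denominator — which is where non-constancy (equivalently $l\ge 1$) enters — after which the remaining sign bookkeeping is routine.
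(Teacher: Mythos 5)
Your proof is correct, and while the first two assertions are handled exactly as in the paper, the key implication takes a genuinely different route. For $\beta_j\ge 0$ and for $\beta_j=0\Rightarrow F_j$ constant you read the minor pattern of Theorem~\ref{th:cheb_mei} off $H_j$, which is precisely what the paper does (it invokes the same theorem, in contrapositive form, just before Lemma~\ref{lemma:p_gcd} and at the start of its proof of the corollary). For $\beta_j>0\Rightarrow F_{j+1}\in\mathcal{S}$, however, the paper never verifies the definition of~$\mathcal{S}$ directly: it applies Lemma~\ref{lemma:FS_FRR} in the \emph{converse} direction to $F_{j+1}$, which requires knowing $F_{j+2}\in\mathcal{R}$; this is extracted from the minor identity \eqref{eq:Hj+2_Hj} (a consequence of Lemma~\ref{lemma:H_lpm}) combined again with Theorem~\ref{th:cheb_mei}, after a separate disposal of the case where $F_{j+1}$ is constant --- a case split forced on the paper because $F_{j+2}$ is only defined when $\beta_{j+1}>0$. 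You instead upgrade $F_{j+1}\in\mathcal{R}$ (the forward half of Lemma~\ref{lemma:FS_FRR}, which also supplies meromorphy) to $F_{j+1}\in\mathcal{S}$ by checking regularity and positivity on $[0,\infty)$ straight from the expansion \eqref{F_sum_form_R} of~$F_j$: since $-A_\nu>0$ and $\sigma_\nu>0$ (an $\mathcal{S}$-function has only negative poles), and since $z+\sigma_\nu\ge\sigma_\nu$ together with $\sum_\nu|A_\nu|/\sigma_\nu^2<\infty$ gives convergence for $z\ge 0$, the function $G_j(-z)=B_1+\sum_\nu\frac{-A_\nu/\sigma_\nu}{z+\sigma_\nu}$ is regular and strictly positive on $[0,\infty)$ once $F_j$ is non-constant (which indeed forces $B_1>0$ or $\omega\ge 1$; note that with Theorem~\ref{th:chebotarev} as stated, $B_1>0$ holds outright, so your cautious ``$B_1\ge 0$'' only makes the argument more robust), whence $F_{j+1}=\beta_j/G_j(-z)$ has no poles and is positive on the nonnegative reals. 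This buys a shorter, case-free argument --- it covers a constant $F_{j+1}$ uniformly and never needs $F_{j+2}$, Lemma~\ref{lemma:H_lpm}, or the converse half of Lemma~\ref{lemma:FS_FRR}, and it even yields strict positivity of $F_{j+1}$ on $[0,\infty)$. What the paper's route buys is thematic economy: the identity \eqref{eq:Hj+2_Hj} keeps the whole argument inside the determinantal calculus that is reused immediately afterwards in Theorem~\ref{th:cheb_mei_pos} and in the matrix factorization, whereas your verification is a one-off analytic computation.
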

\begin{proof}
    If $F_j(z)$ is a constant then $\beta_j=H_j^{(3)}=0$ by Theorem~\ref{th:cheb_mei}. Let
    $F_j(z)$ be a non-constant $\mathcal{S}$-function. Applying Lemma~\ref{lemma:FS_FRR} to it
    gives $F_{j+1}\in\mathcal{R}$. Consequently, $\beta_{j+1}=0$ if $F_{j+1}(z)$ is a constant
    and $\beta_{j+1}>0$ if it is not. Moreover, we have $F_{j+1}(0)=1>0$. Thus, the corollary
    holds in the cases of constant $F_j(z)$ or~$F_{j+1}(z)$.

    Suppose that $\beta_j,\beta_{j+1}>0$. Then Lemma~\ref{lemma:H_lpm} implies that
    \begin{equation}\label{eq:Hj+2_Hj}
        H_j^{(2m+3)}=\beta_j^{\left\lfloor\frac{2m+2}{2}\right\rfloor}
        \beta_{j+1}^{\left\lfloor\frac{2m+1}{2}\right\rfloor}H_{j+2}^{(2m+1)}
        =\beta_j^{m+1}\beta_{j+1}^m H_{j+2}^{(2m+1)},\quad m=1,2,\dots.
    \end{equation}
    That is, for each natural~$m$ the sign of $H_{j+1}^{(2m+1)}$ coincides with the sign of
    $H_j^{(2m+3)}$. Since $F_j\in\mathcal{R}$, Theorem~\ref{th:cheb_mei}
    yields~$F_{j+2}\in\mathcal{R}$. That is, $F_{j+1}\in\mathcal{S}$ by Lemma~\ref{lemma:FS_FRR}.
\end{proof}

\begin{theorem}\label{th:cheb_mei_pos}
    A meromorphic function~$F(z)=\frac{q(z)}{p(z)}$, where $p(z)$ and $q(z)$ are series of the
    form~\eqref{eq:pq_exp}, is an $\mathcal{S}$-function if and only if there exists $\omega$,
    $2\le \omega\le\infty$, such that
    \begin{equation}\label{eq:minorseq}
        H^{(k)}>0,\ k=2,3,\dots,\omega\text{\quad and\quad}
        H^{(\omega+1)}=H^{(\omega+2)}=\dots=0.
    \end{equation}
    Moreover, $\omega$ is finite if and only if $F(z)$ is a rational function with exactly
    $\left\lfloor\frac{\omega-1}{2}\right\rfloor$ poles, counting a pole at infinity (if
    exists).
\end{theorem}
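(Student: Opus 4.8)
The plan is to use the Stieltjes-type algorithm \eqref{eq:process0}--\eqref{eq:process1} as the bridge between the two sides of the equivalence. The principal minors are governed by the coefficients $\beta_j=H_j^{(3)}$ through the product formula \eqref{eq:H_jk_beta}, in which each $\beta_{i-1}$ enters with a strictly positive exponent; hence positivity of a block of consecutive minors is tantamount to positivity of a block of the $\beta_j$. Class membership, on the other hand, is controlled by Theorem~\ref{th:cheb_mei}. I would combine these by applying Theorem~\ref{th:cheb_mei} \emph{twice}, to $F_0$ and to $F_1$, and then use Lemma~\ref{lemma:FS_FRR} to upgrade the $\mathcal R$-information to $\mathcal S$-information.

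For necessity I would assume $F=F_0\in\mathcal S$ and run Corollary~\ref{cr:FjS_i_Fj+1S} repeatedly, letting $\Omega\in\{0,1,\dots,\infty\}$ be the first index with $\beta_\Omega=0$. For $j<\Omega$ one has $\beta_j>0$ and $F_{j+1}\in\mathcal S$, so by \eqref{eq:H_jk_beta} every minor whose product involves only $\beta_0,\dots,\beta_{\Omega-1}$ is strictly positive, giving $H^{(k)}>0$ for $2\le k\le\Omega+2$. If $\Omega=\infty$ then all $\beta_j>0$, hence $H^{(k)}>0$ for every $k$ and $\omega=\infty$. If $\Omega<\infty$, Corollary~\ref{cr:FjS_i_Fj+1S} forces $F_\Omega$ to be constant, i.e. $p_{\Omega-1}=F_\Omega(0)\,p_\Omega$; then the $a$- and $b$-rows of $H_\Omega$ are proportional and $H_\Omega^{(m)}=0$ for all $m\ge 3$. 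Propagating this back through the $\Omega$ strictly positive factors $\beta_0,\dots,\beta_{\Omega-1}$ by Lemma~\ref{lemma:H_lpm} yields $H^{(k)}=c_k\,H_\Omega^{(k-\Omega)}$ with $c_k>0$, so $H^{(k)}=0$ for $k\ge\Omega+3$ and $\omega=\Omega+2$. Finally, since $F\in\mathcal S\subset\mathcal R$, Theorem~\ref{th:cheb_mei} identifies the largest positive odd minor $H^{(2l+1)}$ with the pole count $l$; comparing this with $\omega$ gives $l=\lfloor(\omega-1)/2\rfloor$, and $\omega<\infty$ precisely when $F$ is rational.

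For sufficiency I would assume the pattern \eqref{eq:minorseq}. If $\omega=2$ then $\beta_0=H^{(3)}=0$, so Theorem~\ref{th:cheb_mei} with $l=0$ makes $F$ a constant $\mathcal R$-function, and $F(0)=b_0\ge 0$ places it in $\mathcal S$. If $\omega\ge 3$ then $\beta_0=H^{(3)}>0$, which already forces $F$ to be non-constant (a constant ratio would have proportional rows and $H^{(3)}=0$), so $F_1=\beta_0 z/(F_0-b_0)$ is a well-defined meromorphic ratio of the form \eqref{eq:pq_exp}. An induction on $j$ based on \eqref{eq:H_jk_beta}—in which the chain for $H^{(j+3)}$ only requires $\beta_0,\dots,\beta_{j-1}\ne 0$, not $\beta_j$ itself—extracts $\beta_0,\dots,\beta_{\omega-3}>0$ from the hypothesis $H^{(k)}>0$. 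Theorem~\ref{th:cheb_mei} applied to the odd minors $H_0^{(2m+1)}$ then gives $F_0\in\mathcal R$, while the relation $H_1^{(m)}=H_0^{(m+1)}/\beta_0^{\lfloor m/2\rfloor}$ coming from Lemma~\ref{lemma:H_lpm} turns the \emph{even} minors of $H_0$ into the odd minors of $H_1$, so the same hypothesis yields $F_1\in\mathcal R$. Together with $F_0(0)=b_0\ge 0$, Lemma~\ref{lemma:FS_FRR} concludes $F\in\mathcal S$, and the pole count follows from Theorem~\ref{th:cheb_mei} exactly as before.

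The hard part, I expect, is the vanishing of the higher minors $H^{(k)}=0$ for $k>\omega$ in the rational case. The product formula \eqref{eq:H_jk_beta} cannot reach them, because the first vanishing coefficient $\beta_\Omega=0$ breaks the recursion of Lemma~\ref{lemma:H_lpm} and leaves $p_{\Omega+1}$ undefined. I would resolve this structurally rather than computationally: at termination $F_\Omega$ is constant, so the $a$- and $b$-rows of $H_\Omega$ are proportional and all its minors of order $\ge 3$ vanish identically, and propagating this degeneracy back through the strictly positive factors $\beta_0,\dots,\beta_{\Omega-1}$ transfers it to $H=H_0$. A secondary point—capturing the even minors, which are exactly what separates $\mathcal S$ from $\mathcal R$—is handled cleanly by the identity $H_1^{(m)}=H_0^{(m+1)}/\beta_0^{\lfloor m/2\rfloor}$, which reduces the entire statement to the two invocations of Theorem~\ref{th:cheb_mei}.
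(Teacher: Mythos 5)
Your proposal is correct and follows essentially the same route as the paper: necessity via Corollary~\ref{cr:FjS_i_Fj+1S} combined with the product identity~\eqref{eq:H_jk_beta} and the proportional-rows degeneracy of $H_\Omega$ at termination, and sufficiency by applying Theorem~\ref{th:cheb_mei} to $F_0$ and, through the relation $H_1^{(2m+1)}=\beta_0^{-\lfloor(2m+1)/2\rfloor}H^{(2m+2)}$ from Lemma~\ref{lemma:H_lpm}, to $F_1$, finishing with Lemma~\ref{lemma:FS_FRR}. The only (harmless) deviation is your extra induction extracting $\beta_0,\dots,\beta_{\omega-3}>0$ in the sufficiency direction, which the paper avoids because only $\beta_0>0$ is needed there.
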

\begin{proof}
    By definition,~$H_0^{(2)}=H^{(2)}=1>0$. Denote $p_0(z)\coloneqq p(z)$ and
    $\relpenalty=10000 p_{-1}(z)\coloneqq q(z)$ such that $F(z)=F_0(z)$. From the recurrence
    formul\ae~\eqref{eq:process0}--\eqref{eq:process1} we obtain the sequences
    $(p_j)_{j=-1}^{\omega-2}$ and $(\beta_j)_{j=0}^{\omega-2}$, where $\beta_j\ne 0$ for all
    $j=0,1,\dots \omega-3$ and $\relpenalty=10000 2\le \omega\le\infty$. Whenever $\omega<\infty$ we also have
    $\beta_{\omega-2}=0$.

    Suppose that $F_0\in\mathcal{S}$. Then $\beta_j>0$ for all $j=0,1,\dots \omega-3$ by
    Corollary~\ref{cr:FjS_i_Fj+1S}. Furthermore, the identity~\eqref{eq:H_jk_beta} gives
    \begin{equation}\label{eq:H_j_beta}
        H^{(j)}=H_0^{(j)}=\prod_{i=1}^{j-2}\beta_{i-1}^{\left\lfloor\frac{j-i}{2}\right\rfloor}>0,
        \quad j=3,4,\dots \omega.
    \end{equation}
    Let $\omega<\infty$, then $F_{\omega-2}(z)$ is a constant by
    Corollary~\ref{cr:FjS_i_Fj+1S}. Therefore, we have
    $H_{\omega-2}^{(3)}=H_{\omega-2}^{(4)}=\dots=0$ since all these minors contain proportional
    rows. By the identity~\eqref{eq:H_jk_beta}, this is equivalent to
    $H^{(\omega+1)}=H^{(\omega+2)}=\dots=0$.
    
    So we obtained that $F\in\mathcal{S}$ implies~\eqref{eq:minorseq}. The number of poles the
    function~$F(z)$ has can be determined from Theorem~\ref{th:cheb_mei}.
    
    Now suppose that the conditions~\eqref{eq:minorseq} hold. If~$\omega=2$ then
    $H^{(3)}=H^{(4)}=\dots=0$ and the assertion of this theorem is equivalent to
    Theorem~\ref{th:cheb_mei}. In the case of $3\le \omega\le\infty$ we have $\beta_0>0$, so by
    Lemma~\ref{lemma:H_lpm},
    \begin{align*}
        &H_1^{(2m+1)}=\beta_0^{-\left\lfloor\frac{2m+1}{2}\right\rfloor} H^{(2m+2)}>0,&
        &m=1,2,\dots,\left\lfloor\frac{\omega}2\right\rfloor -1,
        &\text{and}\quad\\
        &H_1^{(2m+1)}=\beta_0^{-\left\lfloor\frac{2m+1}{2}\right\rfloor} H^{(2m+2)}
        =0,& &m\ge\left\lfloor\frac{\omega}2\right\rfloor.&
    \end{align*}
    Hence, the functions $F(z)$ and $F_1(z)$ are $\mathcal{R}$-functions by
    Theorem~\ref{th:cheb_mei}, and Lemma~\ref{lemma:FS_FRR} yields~$F\in\mathcal{S}$.
\end{proof}

\subsection{$\mathcal S$-functions as continued fractions}\label{sec:S-functions-as-CF}

A continued fraction of the form
\begin{equation}\label{F0asCCF}
    \begin{gathered}
        F(z) = c_0+\frac{c_1 z^{r_1}}1 \,\underset{+}{}\,
        \frac{c_2 z^{r_2}}1 \,\underset{+}{}\,
        \frac{c_3 z^{r_3}}1 \,\underset{+\cdots+}{}\,
        \frac{c_\omega z^{r_\omega}}1,\quad\text{where}\\
        c_j\ne 0\quad\text{and}\quad r_j\in\mathbb{N}
        \quad\text{for}\quad j=1,\dots,\omega,\quad 0\le \omega\le \infty,
    \end{gathered}
\end{equation}
is called a \emph{(general) $C$-fraction}. The special case of~\eqref{F0asCCF} that corresponds
to $\relpenalty=10000 r_j=1$ for all $j=1,\dots,\omega$ is called a \emph{regular $C$-fraction}.
Continued fractions of the form~\eqref{F0asCCF} are able to represent power series uniquely,
that is the following fact is true.

\begin{theorem}[{\cite{LeightonScott}, see also~\cite[\S 21,
        S\"atze~3.2--3.5,~3.24]{Perron}}] \label{th:c_fractions}
    Each (formal) power series $F(z)=\sum_{k=0}^\infty s_kz^k$ corresponds to a fraction of the
    form~\eqref{F0asCCF}. This correspondence is set by the following sequence of relations
    \begin{equation}\label{F0asCCF_corr}
        F_0(z) = F(z), \quad c_0=F(0),
        \quad F_{i}(z)=\frac{c_i z^{r_i}}{F_{i-1}(z)-F_{i-1}(0)},\quad i=1,2,\dots,\omega,
    \end{equation}
    where $\omega\le\infty$ is such that $F_{i-1}(z)\not\equiv F_{i-1}(0)$ for $i-1<\omega$ and
    $F_\omega(z)\equiv F_\omega(0)$. The exponents $r_i$ are positive integers chosen together
    with the complex constants $c_i$ in such a way that $F_{i}(0)=1$.

    If two $C$-fractions (finite or infinite) of the form~\eqref{F0asCCF} correspond to the same
    power series, then they coincide. A $C$-fraction is finite if and only if it corresponds to
    a rational function (and, hence, represents that function).

    Moreover, if an infinite continued fraction of the form~\eqref{F0asCCF} converges uniformly
    in a closed region~$T$ containing the origin in its interior, it represents a regular analytic
    non-rational function of~$z$ throughout the interior of~$T$. Further, the corresponding power
    series converges to the same function in and on the boundary of the largest circle which can
    be drawn with its center at the origin, lying wholly within~$T$.
\end{theorem}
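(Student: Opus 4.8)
The plan is to prove the four assertions in turn, since they are of rather different character: existence and uniqueness are purely formal facts about power series, the rational dichotomy is a degree count, and the convergence statement is the only genuinely analytic ingredient.

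\emph{Existence and uniqueness.} First I would check that the recursion~\eqref{F0asCCF_corr} makes sense entirely within the ring of formal power series. Given $F_{i-1}$ with $F_{i-1}(0)$ already normalised, write $F_{i-1}(z)-F_{i-1}(0)=\sum_{k\ge 1}t_kz^k$. If all $t_k$ vanish the process stops with $\omega=i-1$; otherwise let $r_i$ be the least index with $t_{r_i}\ne 0$, so that $F_{i-1}(z)-F_{i-1}(0)=z^{r_i}(t_{r_i}+t_{r_i+1}z+\cdots)$ with an invertible second factor. Then $F_i(z)=c_iz^{r_i}/(F_{i-1}(z)-F_{i-1}(0))$ is again a well-defined formal power series, and the normalisation $F_i(0)=1$ forces $c_i=t_{r_i}$. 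Thus both $r_i$ and $c_i$ are uniquely determined by $F_{i-1}$, which is itself determined by $F_{i-2}$; an induction then shows that any $C$-fraction~\eqref{F0asCCF} whose formal value is $F$ must, on solving~\eqref{F0asCCF_corr} for the successive tails, reproduce exactly these $F_i$, hence the same data $(c_i,r_i)$. This yields both existence and uniqueness, and shows that one expansion is finite precisely when the other is.

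\emph{The rational dichotomy.} A finite $C$-fraction is a finite composition of the rational operations $w\mapsto c_iz^{r_i}/w$ and subtraction of a constant, hence is a rational function whose Taylor series is the given one. For the converse I would pass to the convergents $A_n/B_n$, where the polynomials $A_n,B_n$ obey the usual three-term recurrences $A_n=A_{n-1}+c_nz^{r_n}A_{n-2}$ and $B_n=B_{n-1}+c_nz^{r_n}B_{n-2}$ with $B_n(0)=1$. The determinant identity $A_n/B_n-A_{n-1}/B_{n-1}=(-1)^{n-1}(c_1\cdots c_n)z^{r_1+\cdots+r_n}/(B_nB_{n-1})$ shows that the convergents agree with the formal series $F$ to order $r_1+\cdots+r_n\to\infty$, while $\deg A_n,\deg B_n\to\infty$ as well. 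If $F=P/Q$ were rational with the fraction still infinite, then once $\deg A_n>\deg P$ and $\deg B_n>\deg Q$ the polynomial $A_nQ-B_nP$ would have degree strictly below its order of vanishing at the origin, hence vanish identically; but then $A_n/B_n\equiv F$ and the recursion terminates, a contradiction. Thus rational functions give exactly the finite fractions.

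\emph{Convergence (the analytic core, and the main obstacle).} Suppose the infinite fraction converges uniformly on the closed region $T$. Then the convergents $A_n/B_n$ converge uniformly there, so by Weierstra\ss' theorem the limit $f$ is holomorphic in the interior of $T$; it is non-rational because the fraction is infinite, by the previous paragraph. It remains to identify $f$ with the sum of the power series near the origin. Here I would fix the largest closed disk $\overline{D}_r$ centred at the origin and contained in $T$; on it the convergents are uniformly bounded, and each agrees with the formal series $\sum_k s_kz^k$ through order $r_1+\cdots+r_n$. A normal-families (Vitali) argument then pins down the coefficients $s_k$, shows that $\sum_k s_kz^k$ has radius of convergence at least $r$, and identifies its sum with $f$ on and inside $\partial D_r$. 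I expect the delicate points to be genuinely analytic: ruling out that poles of the rational convergents accumulate inside $T$ (so that uniform convergence is meaningful and Weierstra\ss' theorem applies), and justifying the passage from the formal order-of-agreement to the analytic limit. These are exactly the places where the uniform-convergence hypothesis must be used in full strength.
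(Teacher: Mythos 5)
A preliminary remark: the paper does not prove this theorem at all --- it is imported from the literature, as the bracketed attribution to Leighton--Scott and to Perron indicates --- so there is no internal proof to compare yours against, and your sketch can only be judged on its own terms. Its architecture (the formal recursion~\eqref{F0asCCF_corr} for existence and uniqueness, three-term recurrences plus the determinant identity for the rational dichotomy, normal families for the analytic clause) is the standard classical route. However, the rational dichotomy contains a genuine gap as written. Both $\deg A_n$, $\deg B_n$ and the order of vanishing of $A_nQ-B_nP$ at the origin grow like $r_1+\cdots+r_n$ on the crude count, so your stated criterion ``once $\deg A_n>\deg P$ and $\deg B_n>\deg Q$'' does not place the degree of $A_nQ-B_nP$ strictly below its order of vanishing --- already for a regular $C$-fraction ($r_i\equiv 1$) the crude degree bound and the agreement order are both of size $n$, and your inequality between degrees gives nothing. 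What rescues the argument is a finer bound: unfolding $X_n=X_{n-1}+c_nz^{r_n}X_{n-2}$, the polynomials $A_n,B_n$ are continuants, sums of $\prod_{i\in S}c_iz^{r_i}$ over index sets $S$ containing no two consecutive indices; such a set omits at least roughly $n/2$ indices, each contributing $r_i\ge 1$, so $\deg(A_nQ-B_nP)\le r_1+\cdots+r_n-\lfloor (n-1)/2\rfloor+\max(\deg P,\deg Q)$, whereas the order of vanishing is at least $r_1+\cdots+r_n$. With that bound the degree is strictly below the order for all large $n$ and your contradiction closes; without it, the step fails.

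In the analytic clause your argument proves strictly less than the theorem asserts. Uniform convergence on $T$ does force the convergents to be eventually pole-free and uniformly bounded, say by $M$, on $T$, and Cauchy's estimates on $|z|=r$ then give $|s_k|\le Mr^{-k}$ (since the $k$th Taylor coefficient of $A_n/B_n$ equals $s_k$ once $r_1+\cdots+r_n>k$), hence convergence to $f$ for $|z|<r$. But the theorem claims convergence of the series \emph{on} the circle $|z|=r$ as well, and that circle may touch $\partial T$, where $f$ need not even be analytic; the bound $|s_kz^k|\le M$ there gives nothing, and no Vitali or normal-families argument yields boundary convergence. This endpoint statement is precisely the refinement established by Leighton and Scott and requires a separate argument. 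Two smaller ordering flaws: you declare the limit non-rational ``by the previous paragraph'' before identifying its Taylor series with the corresponding series --- the identification must come first, after which non-rationality follows from uniqueness together with the finite-iff-rational dichotomy; and in the uniqueness induction, treating the tails of an \emph{infinite} $C$-fraction as formal power series already presupposes the correspondence property (agreement order tending to infinity, via the determinant identity), which you invoke only later and should state where it is first used.
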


Suppose that $F(z)=\frac{q(z)}{p(z)}$ is an $\mathcal S$-function. We again denote
$F_0(z)\coloneqq F(z)$, $p_0(z)\coloneqq p(z)$ and $p_{-1}(z)\coloneqq q(z)$ and use the
recurrence formul\ae~\eqref{eq:process0}--\eqref{eq:process1} to obtain the sequences
$(p_j)_{j=-1}^{\omega}$ and $(\beta_j)_{j=0}^{\omega}$, where $\beta_j\ne 0$ for all
$j=0,1,\dots \omega-1$ and $-1\le \omega\le\infty$. In the case $\omega<\infty$ we also have
$\beta_{\omega}=0$.

For each $j=0,1,\dots \omega-1$, we apply Corollary~\ref{cr:FjS_i_Fj+1S}, obtaining $\beta_j>0$
and $F_j\in\mathcal{S}$. If $\omega$ is a finite number, then $F_{\omega}$ is a constant. From
the relation~\eqref{eq:process1} we have
\begin{equation}\label{eq:FjasFCF}
    F_j(z)=\frac {p_{j-1}(z)}{p_j(z)}
    = p_{j-1}(0)+\frac{\beta_jz}{F_{j+1}(z)},\quad j=0,1,\dots,\omega-1.
\end{equation}
These formul\ae\ can be combined into the continued fractions
\begin{align}\label{eq:F0asICF}
    F(z)=F_0(z) & = b_0+\frac{\beta_0 z}1 \,\underset{+}{}\,
    \frac{\beta_1 z}1 \,\underset{+}{}\,
    \frac{\beta_2 z}1 \,\underset{+\cdots+}{}\,
    \frac{\beta_{\omega-1} z}1\quad\text{and}
    \\ \label{eq:FjasICF}
    F_j(z) &= 1+\frac{\beta_j z}1 \,\underset{+}{}\,
    \frac{\beta_{j+1} z}1 \,\underset{+}{}\,
    \frac{\beta_{j+2} z}1 \,\underset{+\cdots+}{}\,
    \frac{\beta_{\omega-1} z}1,
\end{align}
where $j=0,1,\dots,\omega-1$ and $\beta_i>0$ for $i=0,1,\dots,\omega-1$. These are regular
$C$-fractions, and the relations~\eqref{eq:FjasFCF} set the correspondence
satisfying~\eqref{F0asCCF_corr}. That is, the continued fractions in \eqref{eq:F0asICF}
and~\eqref{eq:FjasICF} corresponds to $F(z)$ and $F_j(z)$ for all $j$, respectively, by
Theorem~\ref{th:c_fractions}. In particular, they are finite if and only if $F(z)$ is rational.

Furthermore, there is a power series $g(z)$ such that $\tilde{p}_{j}(z):=\frac{p_j(z)}{g(z)}$
are entire functions for $j=-1,0,\dots,\omega$, and for $j\ge 0$ \ $\tilde{p}_{j-1}(z)$ and
$\tilde{p}_{j}(z)$ have no common zeros (see Lemma~\ref{lemma:p_gcd}). Observe that the
relations~\eqref{eq:FjasFCF}, \eqref{eq:F0asICF} and \eqref{eq:FjasICF} remain the same, if we
replace all the series $p_{j}(z)$ by the functions $\tilde{p}_j(z)$.

It is convenient to study the continued fractions~\eqref{eq:F0asICF} and~\eqref{eq:FjasICF},
using the following.
\begin{theorem}[{Stieltjes,~{\cite[n\textsuperscript{os}~68--69]{Stieltjes}; see
            also~\cite{Perron,Wall}}\footnote{The separate convergence of numerators and
            denominators was shown by \'Sleszy\'nski in~\cite{Sleszinski}.}}]
    \label{th:Stieltjes} Let $b_0\ge 0$. A sequence of positive
    numbers~$\beta_0,\beta_1,\dots,\beta_\omega$, $-1\le\omega\le\infty$, has a finite sum if
    and only if the continued fraction~\eqref{eq:F0asICF} converges to a meromorphic
    $\mathcal{S}$-function and its partial numerators and denominators converge to
    coprime\footnote{This fact was obtained by Maillet in~\cite{Maillet}; see
        also~\cite[p.150]{Perron}.} entire functions of genus~$0$. That is, if the $j$th
    convergent (approximant) to~$F(z)$ is denoted by~$\frac{Q_{j}(z)}{P_{j}(z)}$, then for
    $j\to\infty$ we have
    \begin{gather*}
        P_j(z)\to p(z),\quad Q_j(z)\to q(z)
        \quad\text{and}\\
        b_0+\frac{\beta_0 z}1 \,\underset{+}{}\,
        \frac{\beta_1 z}1 \,\underset{+\cdots+}{}\, \frac{\beta_{j-1} z}1 = 
        \frac{Q_j(z)}{P_j(z)}\to \frac{q(z)}{p(z)}=F(z),
    \end{gather*}
    where $p(z)$ and $q(z)$ are coprime entire functions of genus~$0$. The convergence is
    uniform on compact subsets of~$\mathbb{C}$ containing no poles of the function $F(z)$.
\end{theorem}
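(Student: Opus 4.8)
The plan is to work directly with the approximants of~\eqref{eq:F0asICF}. Write its $n$th convergent as $\frac{Q_n(z)}{P_n(z)}$, so that the polynomials $P_n,Q_n$ obey the three-term recurrences $P_n=P_{n-1}+\beta_{n-1}zP_{n-2}$ and $Q_n=Q_{n-1}+\beta_{n-1}zQ_{n-2}$, with $P_0=1$, $P_{-1}=0$, and the determinant identity $P_nQ_{n-1}-P_{n-1}Q_n=\pm\,z^{n}\prod_{i=0}^{n-1}\beta_i$. The latter forces each pair $(P_n,Q_n)$ to be coprime. Every finite truncation of~\eqref{eq:F0asICF} has $b_0\ge0$ and positive partial numerators, and by the relation $F_j=1+\frac{\beta_jz}{F_{j+1}}$ read backwards (cf.~\eqref{eq:FjasFCF}) it is a rational $\mathcal{S}$-function, its associated minors~\eqref{eq:H_j_beta} being positive until the fraction terminates and vanishing afterwards, so that Theorem~\ref{th:cheb_mei_pos} applies. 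Consequently its poles are negative, i.e. all zeros of $P_n$ are negative reals. When $\omega<\infty$ the fraction terminates, $P_n$ and $Q_n$ stabilise to coprime polynomials (entire of genus~$0$), the limit is a rational $\mathcal{S}$-function, and the sum is trivially finite; thus the whole substance lies in the case $\omega=\infty$, to which I turn.

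Assume $\sigma\coloneqq\sum_{j}\beta_j<\infty$. Normalising $P_n(0)=1$ and writing $P_n(z)=\prod_k\bigl(1+\frac{z}{\xi_{n,k}}\bigr)$ with $\xi_{n,k}>0$, the recurrence yields that the coefficient of $z$ in $P_n$ equals $\sum_k\frac1{\xi_{n,k}}=\sum_{i=1}^{n-1}\beta_i\le\sigma$. This single uniform bound gives $|P_n(z)|\le\prod_k\bigl(1+\frac{|z|}{\xi_{n,k}}\bigr)\le e^{\sigma|z|}$, so $\{P_n\}$—and likewise $\{Q_n\}$—is locally uniformly bounded. Each Taylor coefficient satisfies $[z^m]P_n=[z^m]P_{n-1}+\beta_{n-1}[z^{m-1}]P_{n-2}$, whose increments are summable because $\beta_{n-1}\to0$ while the coefficients stay bounded; hence $P_n\to p$ and $Q_n\to q$ coefficientwise, and the uniform bound promotes this to locally uniform convergence to entire functions with $|p(z)|,|q(z)|\le e^{\sigma|z|}$. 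Therefore $\frac{Q_n}{P_n}\to F\coloneqq\frac qp$ locally uniformly off the zeros of $p$; as a locally uniform limit of rational $\mathcal{S}$-functions it is again an $\mathcal{S}$-function (its data in Theorem~\ref{th:chebotarev} pass to the limit), and the zeros of $p$, being limits of the negative numbers $-\xi_{n,k}$, are negative. Thus $p(z)=e^{\gamma z}\prod_k\bigl(1+\frac{z}{\xi_k}\bigr)$ with $\gamma\ge0$ and $\sum_k\frac1{\xi_k}<\infty$.

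The crux is to improve ``genus~$\le1$'' to genus~$0$, that is to rule out the factor $e^{\gamma z}$; this is the delicate point and, I expect, the main obstacle. Polynomials with only negative zeros and uniformly bounded reciprocal-zero sums may still converge to a pure exponential (for example $(1+\frac zn)^n\to e^{z}$), so the bound $\sum_k\frac1{\xi_{n,k}}\le\sigma$ does not by itself exclude $\gamma>0$, and the specific structure of the Stieltjes fraction must enter. The idea I would pursue is to identify $\gamma$ with the reciprocal mass that \emph{escapes to infinity}: by Hurwitz's theorem the zeros of $P_n$ split into a part converging to the finite zeros of $p$ and a part tending to $+\infty$, the total reciprocal weight accumulated through stage $n$ is $\sum_{i=1}^{n-1}\beta_i$, and the weight accrued \emph{after} stage $m$ is the tail $\sum_{i\ge m}\beta_i$. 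Using the tail fractions~\eqref{eq:FjasICF} together with the interlacing of the zeros of consecutive denominators, one bounds the reciprocal weight of the escaping block by this tail, which tends to $0$ as $m\to\infty$. Hence the escaping mass, and with it $\gamma$, vanishes, so $p$—and by the same reasoning $q$—is of genus~$0$; their coprimeness is inherited from the determinant identity above in the limit. This ``no exponential factor'' step is exactly the refinement due to Maillet.

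The converse is then immediate. Suppose the convergents tend to coprime entire functions $p,q$ of genus~$0$ with $F=\frac qp$ a meromorphic $\mathcal{S}$-function. Its poles—the zeros of $p$—are negative, and genus~$0$ means $p(z)=\prod_k\bigl(1+\frac z{\xi_k}\bigr)$ with $\xi_k>0$, \emph{without} exponential factor, and $\sum_k\frac1{\xi_k}<\infty$; consequently the coefficient of $z$ in $p$ equals $\sum_k\frac1{\xi_k}$. Since coefficientwise convergence gives $\sum_{i=1}^{n-1}\beta_i=[z^1]P_n\to[z^1]p=\sum_k\frac1{\xi_k}<\infty$, and $\beta_0$ is a single finite term, we conclude $\sum_j\beta_j<\infty$. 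This closes the equivalence, the genus-$0$ assertion of the previous paragraph being the only nontrivial ingredient.
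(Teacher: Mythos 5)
The paper does not actually prove Theorem~\ref{th:Stieltjes}: it is invoked as a classical result, with the separate convergence of $P_j$ and $Q_j$ credited to Stieltjes and \'Sleszy\'nski and the coprimality/genus-$0$ assertion credited to Maillet (see the footnotes and \cite[p.150]{Perron}). Measured against what a complete proof requires, much of your scaffolding is sound and close in spirit to the paper's own Lemma~\ref{lemma:decomp_decrease}: the recurrences, the identity $[z^1]P_n=\sum_{i=1}^{n-1}\beta_i$, the bound $|P_n(z)|\le e^{\sigma|z|}$ coming from negativity of the zeros, monotone coefficientwise convergence promoted by normal families, and your converse direction (monotone first coefficients, exactly the mechanism of~\eqref{eq:beta_finite}) is correct as stated. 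But the two assertions that make the theorem nontrivial are, respectively, only announced and actually wrong as argued.

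First, the genus-$0$ step. You correctly identify it as the crux and even give the example $(1+\frac zn)^n\to e^z$ showing that the uniform bound $\sum_k\xi_{n,k}^{-1}\le\sigma$ cannot by itself exclude $\gamma>0$; but your proposed remedy --- ``one bounds the reciprocal weight of the escaping block by the tail $\sum_{i\ge m}\beta_i$'' --- is never formulated as a precise inequality, let alone derived. Interlacing of \emph{consecutive} denominators relates $P_n$ only to $P_{n-1}$; composing $n-m$ such steps puts no constraint on \emph{where} on the negative axis the mass $\sum_{i=m}^{n-1}\beta_i$ added after stage $m$ ends up, so the tightness statement you need (for every $\varepsilon$ there is $R$ with $\sum_{\xi_{n,k}>R}\xi_{n,k}^{-1}<\varepsilon$ uniformly in $n$) does not follow from anything you have written. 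This is precisely Maillet's refinement, and it is assumed rather than proved. Second, coprimality of the limits: you claim it is ``inherited from the determinant identity in the limit,'' but $P_nQ_{n-1}-P_{n-1}Q_n=\pm z^n\prod_{i=0}^{n-1}\beta_i$, and since $\beta_i\to0$ the right-hand side tends to $0$ locally uniformly, so in the limit the identity degenerates to $pq-pq=0$ and yields nothing; the interlacing zeros of $P_n$ and $Q_n$ may coalesce as $n\to\infty$, and excluding such a collision is again the nontrivial cited content, not a corollary of the finite-$n$ identity. (A minor, fixable point: the ``likewise'' bound for $Q_n$ requires $[z^1]Q_n=\beta_0+b_0\sum_{i=1}^{n-1}\beta_i$ and a separate normalization in the case $b_0=0$, where $z$ divides every $Q_n$.) So the proposal correctly assembles the elementary frame of Stieltjes' theorem, but the genus-$0$ and coprimality claims --- the heart of the statement --- remain unproved, the latter by an argument that genuinely fails.
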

To apply this theorem we need to distinguish the case of~$\sum_{j=0}^\infty\beta_j<\infty$
for~$\relpenalty=10000\omega=\infty$.

\begin{lemma}\label{lemma:decomp_decrease}
    Let the functions $\tilde{p}_1(z)$ and $\tilde{p}_{0}(z)$ be entire of genus~$0$, coprime
    and such that their ratio $\mathcal{S}\ni F_1\coloneqq\frac{\tilde{p}_{0}}{\tilde{p}_1}$ is
    not rational. Then $\tilde{p}_j(z)\to 1$ as $j\to\infty$ uniformly on compact subsets
    of~$\mathbb{C}$, and $\sum_{j=1}^\infty\beta_j<\infty$.
\end{lemma}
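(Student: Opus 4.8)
The plan is to leverage the continued-fraction machinery already developed. Since $F_1 = \tilde p_0/\tilde p_1 \in \mathcal S$ is non-rational, the recurrence $\eqref{eq:process0}$–$\eqref{eq:process1}$ produces an infinite sequence $(\beta_j)_{j\ge 1}$ with $\beta_j>0$ (by Corollary~\ref{cr:FjS_i_Fj+1S}), and the associated regular $C$-fraction $\eqref{eq:FjasICF}$ with $j=1$ corresponds to $F_1$ by Theorem~\ref{th:c_fractions}. The key point I want to exploit is that this correspondence runs in BOTH directions via Theorem~\ref{th:Stieltjes}: that theorem states the equivalence between $\sum_j\beta_j<\infty$ and the convergence of the continued fraction to a meromorphic $\mathcal S$-function whose partial numerators and denominators tend to coprime entire functions of genus~$0$. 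The whole difficulty is that Theorem~\ref{th:Stieltjes} is phrased as ``positive summable sequence $\Rightarrow$ nice limit,'' whereas here I am GIVEN that the limit is nice (entire of genus~$0$) and must CONCLUDE summability — this is exactly the reversal the authors flagged in the preamble to Section~\ref{sec:basic_facts}.

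**First I would** argue that the sum $\sum_{j\ge 1}\beta_j$ cannot diverge. Suppose for contradiction it diverges. By Theorem~\ref{th:Stieltjes}, the finiteness of the sum is EQUIVALENT to the $C$-fraction $\eqref{eq:FjasICF}$ converging to a meromorphic $\mathcal S$-function with partial denominators $P_j(z)$ converging to an entire function of genus~$0$. Since we are given that $F_1$ IS such a function (its denominator $\tilde p_1$ is entire of genus~$0$ and coprime with $\tilde p_0$), and since by Theorem~\ref{th:c_fractions} the $C$-fraction correspondence to a given power series is UNIQUE, the coefficient sequence $(\beta_j)$ is uniquely determined by $F_1$. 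Thus the ``if and only if'' in Theorem~\ref{th:Stieltjes} forces: if the genuine coefficient sequence of $F_1$ had divergent sum, then $F_1$ could not be representable with an entire genus~$0$ denominator — contradicting the hypothesis that $\tilde p_1$ is exactly of that type. Hence $\sum_{j\ge 1}\beta_j<\infty$.

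**Next I would** deduce the convergence $\tilde p_j(z)\to 1$. Once summability is secured, Theorem~\ref{th:Stieltjes} (applied to the tail $C$-fraction starting at index~$j$) gives that each $F_j\in\mathcal S$ is represented by convergents, and the partial denominators converge to $\tilde p_j$ uniformly on compact sets avoiding poles. The cleaner route, though, is to examine the recurrence $\eqref{eq:proc1_lt}$ directly: from $\tilde p_{j-1}=\beta_j z\,\tilde p_{j+1}+\tilde p_j$ (the rearrangement in Lemma~\ref{lemma:p_gcd}) together with $\tilde p_j(0)=1$, I would show that the sequence $(\tilde p_j)$ is locally uniformly bounded and that consecutive differences $\tilde p_{j-1}-\tilde p_j = \beta_j z\,\tilde p_{j+1}$ are controlled by $\beta_j$. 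Since $\sum\beta_j<\infty$, on any disk $|z|\le R$ the telescoping series $\sum_j(\tilde p_{j-1}-\tilde p_j)$ is dominated by $R\sum_j\beta_j\sup_{|z|\le R}|\tilde p_{j+1}|$, forcing the tail to vanish and $\tilde p_j$ to converge; the constant term pins the limit to $1$.

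**The main obstacle** I anticipate is the estimate in this last step: I need uniform control of $\sup_{|z|\le R}|\tilde p_j|$ as $j\to\infty$ BEFORE I know the limit is $1$, which is slightly circular. To break the circularity I would use the normalization together with a Vitali/Montel argument — the $\tilde p_j$ are entire of genus~$0$, the approximants $P_j$ of Theorem~\ref{th:Stieltjes} form a normal family on compacta away from poles, and any locally uniform limit $\tilde p_\infty$ must satisfy the recurrence with $\beta_j\to 0$, which (given $\tilde p_\infty(0)=1$) forces $\tilde p_\infty\equiv 1$. Since every subsequential limit equals $1$, the whole sequence converges to $1$. I expect the bulk of the genuine work to lie in verifying the normal-family hypothesis and translating the $\mathcal S$-function pole structure into the compact-avoidance condition, rather than in the telescoping estimate itself.
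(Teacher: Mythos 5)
Your proposal has two genuine gaps, one in each half. The first, in the summability step, is a non sequitur in the use of Theorem~\ref{th:Stieltjes}. The negation of its ``only if'' direction says: if $\sum_j\beta_j=\infty$, then the continued fraction \emph{fails to converge} to a meromorphic $\mathcal{S}$-function with separately converging, coprime genus-$0$ numerators and denominators. It says nothing about whether the power series to which the fraction \emph{corresponds} can be the Taylor series of a function that happens to admit a representation $\tilde{p}_0/\tilde{p}_1$ with coprime entire genus-$0$ factors. Correspondence in the sense of Theorem~\ref{th:c_fractions} is a formal matching of Taylor coefficients and does not entail convergence of the fraction, so your hypothesis on $F_1$ produces no contradiction with divergence: a divergent-sum fraction could a priori correspond to such an $F_1$ while its approximants simply fail to converge as prescribed. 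Ruling this out is precisely the content of the lemma — the paper flags this in the preamble to Section~\ref{sec:basic_facts}, noting that Lemma~\ref{lemma:decomp_decrease} \emph{reverses} the approach of Theorem~\ref{th:Stieltjes}, so quoting that theorem's equivalence is circular here. The paper's actual route is elementary and different: by Corollary~\ref{cr:FjS_i_Fj+1S} and Lemma~\ref{lemma:p_gcd} each $F_j=\tilde{p}_{j-1}/\tilde{p}_j$ is in $\mathcal{S}$ with coprime entire factors, so every $\tilde{p}_j$ has only negative zeros and hence nonnegative Taylor coefficients $a_k^{(j)}$; comparing first coefficients in~\eqref{eq:proc1_lt} gives $a_1^{(j-1)}=\beta_j+a_1^{(j)}$, whence $\sum_{i=1}^{j}\beta_i=a_1^{(0)}-a_1^{(j)}\le a_1^{(0)}<\infty$.

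The second gap is the identification of the limit. Your telescoping-plus-Montel argument yields at best that $\tilde{p}_j$ converges locally uniformly to some entire $\tilde{p}_\infty$ with $\tilde{p}_\infty(0)=1$ (local boundedness does hold, since by the interlacing~\eqref{eq:pq_zeros} the nonnegative coefficients $a_k^{(j)}$ decrease in $j$, so $|\tilde{p}_j(z)|\le\tilde{p}_0(|z|)$). But the claim that the recurrence ``with $\beta_j\to0$ forces $\tilde{p}_\infty\equiv1$'' is unfounded: in the limit the relation $\tilde{p}_{j-1}-\tilde{p}_j=\beta_j z\,\tilde{p}_{j+1}$ degenerates to $0=0$ and constrains nothing, and the normalization at the origin certainly does not pin the limit to a constant. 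Quantitatively, the telescoped identity only gives $a_1^{(j)}\to a_1^{(0)}-\sum_{i\ge1}\beta_i\ge0$, and nothing in your argument excludes a strictly positive limit — equivalently, the zeros $\sigma_\nu^{(j)}$, which increase in $j$, could a priori tend to \emph{finite} limits, making $\tilde{p}_\infty$ a nontrivial function with zeros. The paper closes exactly this hole with an idea your proposal lacks: once $\sum_j\beta_j<\infty$, Worpitzky's test shows that for every $R$ the tail fraction representing $F_{j_0}$ converges to an analytic function on $|z|<R$ as soon as $\beta_jR<\tfrac14$ for $j\ge j_0$, so $\tilde{p}_{j_0}$ is zero-free there and the smallest zero satisfies $\sigma_1^{(j)}\to\infty$; a uniform-tail estimate (each $1/\sigma_i^{(j)}$ decreases in $j$, so tails are controlled by the tail at $j_0$) then yields $a_1^{(j)}\to0$, and the bound $a_k^{(j)}\le\bigl(a_1^{(j)}\bigr)^k$ of~\eqref{eq:ak_bk_are_bounded} upgrades this to $\tilde{p}_j\to1$ uniformly on compacta. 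Without this zero-escape mechanism, or a substitute for it, your proof cannot conclude.
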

\begin{proof}
    According to~\eqref{eq:proc1_lt}, all the functions~$\tilde{p}_j(z)$, $j=0,1,\dots$, are entire
    of genus~$0$ and $\tilde{p}_j(0)=1$. Moreover, $\tilde{p}_0,\dots,\tilde{p}_j$ are coprime (by
    Lemma~\ref{lemma:p_gcd}) and hence have only negative zeros (since by
    Corollary~\ref{cr:FjS_i_Fj+1S} $F_{j}=\frac{\tilde{p}_{j-1}}{\tilde{p}_j}\in\mathcal{S}$).
    Therefore, the following representation is valid for $j=0,1,\dots$
    \begin{equation*}
        \tilde{p}_{j}(z)=\sum_{k=0}^\infty a_k^{(j)} z^k
        =\prod_{\nu=1}^\infty \left(1+\frac{z}{\sigma_\nu^{(j)}}\right),
    \end{equation*}
    where $0<\sigma_1^{(j)}\le\sigma_2^{(j)}\le\dots$ and
    $\sum_{\nu=1}^\infty \frac{1}{\sigma_\nu^{(j)}}<\infty$. The coefficients $a_k^{(j)}$,
    $k=1,2,\dots$, are equal to
    \begin{equation}
        a_k^{(j)} =
        \sum_{i_1=1}^\infty\sum_{\substack{i_2=1\\i_2\notin\left\{i_1\right\}}}^\infty\dots
        \sum_{\substack{i_k=1\\i_k\notin\left\{i_1,i_2,\dots,i_{k-1}\right\}}}^\infty\frac
        {1}{\sigma_{i_1}^{(j)}\sigma_{i_2}^{(j)}\cdots\sigma_{i_k}^{(j)}}.\label{eq:ak_as_sum}
    \end{equation}
    Note that these sums are convergent, since
    \footnote{In fact, even an estimate stronger than~\eqref{eq:ak_bk_are_bounded} is valid
        (\emph{cf.}~\cite[p.~105]{Sleszinski}). For each tuple of distinct numbers
        $(i_1,i_2,\dots,i_k)$ there is only one summand
        $\left(\sigma_{i_1}^{(j)}\sigma_{i_2}^{(j)}\cdots\sigma_{i_k}^{(j)}\right)^{-1}$ in the
        right-hand side of~\eqref{eq:ak_as_sum}. At the same time, the sum
        \[
        \sum_{i_1=1}^\infty\sum_{i_2=1}^\infty\dots
        \sum_{i_k=1}^\infty\frac
        1{\sigma_{i_1}^{(j)}} \frac 1{\sigma_{i_2}^{(j)}}\cdots\frac 1{\sigma_{i_k}^{(j)}}
        \]
        contains exactly~$k!$ summands of this form. Therefore,
        \[a_k^{(j)}< \frac 1{k!} \left(a_1^{(j)}\right)^{k} \text{ for }k=2,3,4,\dots.\]
    }
    \begin{equation}\label{eq:ak_bk_are_bounded}
        a_k^{(j)}<
        \sum_{i_1=1}^\infty\sum_{i_2=1}^\infty\dots
        \sum_{i_k=1}^\infty\frac
        1{\sigma_{i_1}^{(j)}} \frac 1{\sigma_{i_2}^{(j)}}\cdots\frac 1{\sigma_{i_k}^{(j)}}
        = \left(a_1^{(j)}\right)^{k}
        \text{ when }k=2,3,4,\dots.
    \end{equation}
    
    By Lemma~\ref{lemma:cheb1} the zeros of $\tilde{p}_j(z)$ and $\tilde{p}_{j-1}(z)$ (which are
    negative) must be simple and interlacing. In addition, $F'_{j}(z)>0$ for real $z$ implying
    that $0<\sigma_1^{(j-1)}<\sigma_1^{(j)}$. Hence
    \begin{equation}
        0<\sigma_1^{(j-1)}<\sigma_1^{(j)}<\sigma_2^{(j-1)}<\sigma_2^{(j)}
        <\sigma_3^{(j-1)}<\sigma_3^{(j)}<\dots.\label{eq:pq_zeros}
    \end{equation}
    Now we estimate the expression~\eqref{eq:ak_as_sum} using the
    inequalities~\eqref{eq:pq_zeros} and obtain that $0\le a_k^{(j)}<a_k^{(j-1)}$ for
    $j=1,2,\dots$, {\em i.e.} the sequence of positive numbers $a_k^{(j)}$ decreases in $j$ for
    fixed $k$. Therefore, there exists a finite~$\lim_{j\to\infty}a_k^{(j)}\ge 0$ dependent on
    $k$.

    At the same time, the equality~\eqref{eq:proc1_lt} implies that the first Taylor coefficient
    $a_1^{(j)}$ for any $j$ has the form
    \[
    a_1^{(j-1)}=\beta_{j}+a_1^{(j)}.
    \]
    Consequently,
    \begin{equation}\label{eq:beta_finite}
        a_1^{(0)}=\sum_{i=1}^{j}\beta_i+a_1^{(j)}\quad\text{and}\quad\sum_{j=1}^{\infty}\beta_j
        =a_1^{(0)}-\lim_{j\to\infty}a_1^{(j)}.
    \end{equation}
    Therefore, the series~$\sum_{j=0}^\infty\beta_j$ converges.

    As a consequence, for an arbitrary positive number $R$ there exists an integer $j_0(R)$ such
    that $\beta_jR<\frac 14$ for all $j\ge j_0$. By virtue of Worpitzky's test (as it stated
    in~\cite[p.~45]{Wall}, see also~\cite{Worpitzky}) the continued fraction
    \[
    F_{j_0}(z) = 1+\frac{\beta_{j_0} z}1 \,\underset{+}{}\,
    \frac{\beta_{j_0+1} z}1 \,\underset{+}{}\,
    \frac{\beta_{j_0+2} z}1 \,\underset{+\cdots}{}
    \]
    converges to an analytic function uniformly in the disk $|z|<R$. This analytic function
    coincides with~$F_{j_0}(z)$ (since $F_{j_0}(z)$ corresponds to the continued fraction,
    see~\eqref{eq:FjasICF}). Therefore, $\tilde{p}_{j_0}$ has no zeros in this disk, that is
    $R<\sigma_1^{(j_0)}<\sigma_1^{(j)}$, $j\ge j_0$. Letting $R$ tend to infinity, we obtain
    $\lim_{j\to\infty}\sigma_1^{(j)}=\infty$. According to~\eqref{eq:ak_as_sum} we have
    \[
        a_1^{(j)}=\sum_{i=1}^\infty\frac{1}{\sigma_{i}^{(j)}}
    \]
    and each term in this series monotonically tends to zero as $j\to\infty$. For any
    $\varepsilon>0$ there exists $N$ such that
    \[    
    \sum_{i=N+1}^\infty\frac{1}{\sigma_{i}^{(j_0)}}<\varepsilon,\text{ which for $j\ge j_0$ gives}
    \sum_{i=N+1}^\infty\frac{1}{\sigma_{i}^{(j)}}\le
    \sum_{i=N+1}^\infty\frac{1}{\sigma_{i}^{(j_0)}}<\varepsilon.
    \]
    On the other hand, $\displaystyle\lim_{j\to\infty}\sum_{i=1}^N\frac{1}{\sigma_{i}^{(j)}}=0$,
    so the coefficient $a_1^{(j)}$ also vanishes. Now from~\eqref{eq:ak_bk_are_bounded} we obtain
    that $\tilde{p}_j(z)\to 1$ as $j\to\infty$ uniformly on compact subsets of~$\mathbb C$.
\end{proof}

\begin{corollary}\label{cr:norm_H}
    Under the assumptions of Lemma~\ref{lemma:decomp_decrease} there exists a positive number
    $M$ independent of~$j$ such that
    \[
    \|H(\tilde{p}_{j+1},\tilde{p}_j)\|_1 < M\quad\text{for}\quad j=0,1,\dots,
    \]
    where the matrix $H(\tilde{p}_{j+1},\tilde{p}_j)$ is defined by~\eqref{eq:Hpq_def}, and
    \[  
    \| H(\tilde{p}_{j+1},\tilde{p}_j)-H(1,1)\|_1\xrightarrow{j\to\infty} 0.
    \]
\end{corollary}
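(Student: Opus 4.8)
The plan is to reduce both assertions to a single row-sum computation. Under the hypotheses of Lemma~\ref{lemma:decomp_decrease}, each $\tilde{p}_j$ is a genus-$0$ entire function with only negative zeros, so its Taylor coefficients $a_k^{(j)}$ are all nonnegative and $a_0^{(j)}=1$. Consequently every entry of $H(\tilde{p}_{j+1},\tilde{p}_j)$ in the form~\eqref{eq:Hpq_def} is nonnegative, and the staircase shifts that build the matrix merely prepend zeros to each row without dropping any coefficient. Thus every odd row is a shifted copy of the full coefficient sequence of $\tilde{p}_j$ and every even row a shifted copy of that of $\tilde{p}_{j+1}$, so the absolute row sums are exactly $\tilde{p}_j(1)=\sum_k a_k^{(j)}$ and $\tilde{p}_{j+1}(1)=\sum_k a_k^{(j+1)}$. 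This yields the identity
\[
\|H(\tilde{p}_{j+1},\tilde{p}_j)\|_1=\max\bigl\{\tilde{p}_j(1),\,\tilde{p}_{j+1}(1)\bigr\}.
\]

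First I would establish the uniform bound. The footnote estimate accompanying~\eqref{eq:ak_bk_are_bounded} gives $a_k^{(j)}<\frac{1}{k!}\bigl(a_1^{(j)}\bigr)^k$ for $k\ge 2$, whence $\tilde{p}_j(1)<e^{a_1^{(j)}}$. By~\eqref{eq:beta_finite} the quantity $a_1^{(j)}=a_1^{(0)}-\sum_{i=1}^{j}\beta_i$ is nonincreasing in $j$ and bounded above by $a_1^{(0)}$, and the same holds with $j$ replaced by $j+1$. Hence $\tilde{p}_j(1),\tilde{p}_{j+1}(1)<e^{a_1^{(0)}}$ for every $j$, so one may take the $j$-independent constant $M\coloneqq e^{a_1^{(0)}}$, which settles the first assertion.

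For the second assertion I would compute the row sums of the difference. The matrix $H(1,1)$ carries a single $1$ in each row, located precisely at the position of the leading coefficient $a_0^{(j)}=a_0^{(j+1)}=1$; subtracting $H(1,1)$ therefore cancels exactly these leading entries and leaves a nonnegative matrix whose odd and even row sums equal $\tilde{p}_j(1)-1$ and $\tilde{p}_{j+1}(1)-1$, respectively. This gives
\[
\|H(\tilde{p}_{j+1},\tilde{p}_j)-H(1,1)\|_1=\max\bigl\{\tilde{p}_j(1)-1,\,\tilde{p}_{j+1}(1)-1\bigr\}.
\]
From $1\le\tilde{p}_j(1)<e^{a_1^{(j)}}$ together with the fact, proved at the end of Lemma~\ref{lemma:decomp_decrease}, that $a_1^{(j)}\to 0$ as $j\to\infty$, it follows that $\tilde{p}_j(1)-1\to 0$, and likewise $\tilde{p}_{j+1}(1)-1\to 0$, which is the claimed convergence to zero.

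The computation is routine once the row-sum identity is in place; the only point requiring care—and the main obstacle—is justifying that the $\rho=1$ norm is finite at all, that is, that $\sum_k a_k^{(j)}=\tilde{p}_j(1)<\infty$. This is exactly the convergence of the genus-$0$ product $\prod_\nu\bigl(1+1/\sigma_\nu^{(j)}\bigr)$ at $z=1$, which holds because $\sum_\nu 1/\sigma_\nu^{(j)}<\infty$; the exponential bound $\tilde{p}_j(1)<e^{a_1^{(j)}}$ simultaneously confirms this finiteness and supplies the uniform control driving both estimates.
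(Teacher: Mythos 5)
Your proof is correct and takes essentially the same approach as the paper: both reduce the $\|\cdot\|_1$-norms of $H(\tilde{p}_{j+1},\tilde{p}_j)$ and of $H(\tilde{p}_{j+1},\tilde{p}_j)-H(1,1)$ to the row sums $\tilde{p}_j(1)$ and $\tilde{p}_{j+1}(1)$ (resp.\ $\tilde{p}_j(1)-1$ and $\tilde{p}_{j+1}(1)-1$) and then invoke Lemma~\ref{lemma:decomp_decrease}. The only cosmetic difference is in the bookkeeping: the paper uses the monotonicity $0\le a_k^{(j+1)}\le a_k^{(j)}\le a_k^{(0)}$ to take $M=\tilde{p}_0(1)$ and reads off $\tilde{p}_j(1)\to 1$ directly from the lemma's uniform convergence, whereas you take $M=e^{a_1^{(0)}}$ via the footnote estimate $a_k^{(j)}<\frac{1}{k!}\bigl(a_1^{(j)}\bigr)^k$ together with the vanishing of $a_1^{(j)}$ established inside the lemma's proof---both routes are valid.
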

\begin{proof} Since $a_k^{(j)}\ge a_k^{(j+1)}\ge 0$ for all $j,k=0,1,\dots$, we have
    \[
        \|H(\tilde{p}_{j+1},\tilde{p}_j)\|_1 = \max\left\{\sum_{k=0}^\infty
            a_k^{(j+1)} ,\sum_{k=0}^\infty a_k^{(j)}\right\} =
        \sum_{k=0}^\infty a_k^{(j)}\le\sum_{k=0}^\infty a_k^{(0)}=\tilde{p}_0(1)<\infty.
    \]
    Observe that $a_0^{(j)}=1$ whenever $j\ge 0$. Therefore, by
    Lemma~\ref{lemma:decomp_decrease} we obtain the required
    \begin{multline*}
        \|H(\tilde{p}_{j+1},\tilde{p}_j) - H(1,1)\|_1 = \max\left\{\sum_{k=1}^\infty a_k^{(j+1)}
            ,\sum_{k=1}^\infty a_k^{(j)}\right\} =\\\sum_{k=1}^\infty
        a_k^{(j)}=\tilde{p}_j(1)-1\xrightarrow{j\to\infty}0.
    \end{multline*}
\end{proof}
{
    \renewcommand{\thetheorem}{\ref{cr:main2}}
    \begin{corollary}
        Let~$F(z)=\frac{q(z)}{p(z)}$ be a meromorphic $\mathcal{S}$-function, where the entire
        functions $p(z)$ and $q(z)$ are of genus~$0$. Then it can be expanded into a uniformly
        convergent continued fraction of the form~\eqref{eq:F0asICF} with the coefficients $b_0$ and
        $(\beta_j)_{j=0}^{\omega-1}$, $\sum_{j=0}^{\omega-1}\beta_j<\infty$, given
        by~\eqref{eq:process0}. No other continued fractions of the form~\eqref{F0asCCF} can
        correspond to the Taylor series of~$F(z)$.
    \end{corollary}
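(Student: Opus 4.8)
The plan is to assemble the statement from the apparatus built above, running the rational and non-rational cases in parallel. First I would launch the recurrence \eqref{eq:process0}--\eqref{eq:process1} from $F_0\coloneqq F$, $p_0\coloneqq p$, $p_{-1}\coloneqq q$. Because $F\in\mathcal{S}$, Corollary~\ref{cr:FjS_i_Fj+1S} applies at every step: so long as $F_j$ is non-constant it yields $\beta_j=H_j^{(3)}>0$ together with $F_{j+1}\in\mathcal{S}$. This produces the positive coefficients $(\beta_j)_{j=0}^{\omega-1}$, automatically given by \eqref{eq:process0}, and, through the relations \eqref{eq:FjasFCF}, assembles the regular $C$-fraction \eqref{eq:F0asICF}. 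By the correspondence \eqref{F0asCCF_corr} of Theorem~\ref{th:c_fractions}, this fraction corresponds to the Taylor series of $F(z)$, and it terminates exactly when $F$ is rational.

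Next I would verify that $\sum_{j}\beta_j<\infty$. If $F$ is rational then $\omega<\infty$ and the sum is finite trivially. If $F$ is non-rational then $\omega=\infty$; passing to the coprime representatives $\tilde p_j\coloneqq p_j/g$ furnished by Lemma~\ref{lemma:p_gcd}, the ratio $F_1=\tilde p_0/\tilde p_1\in\mathcal{S}$ is again non-rational, so Lemma~\ref{lemma:decomp_decrease} delivers $\sum_{j=1}^\infty\beta_j<\infty$; adjoining the single term $\beta_0$ gives $\sum_{j=0}^\infty\beta_j<\infty$.

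With $b_0\ge 0$, all $\beta_j>0$, and the sum finite, I would then invoke Stieltjes' Theorem~\ref{th:Stieltjes}. In the infinite case it guarantees that \eqref{eq:F0asICF} converges, uniformly on compact subsets free of poles, to a meromorphic $\mathcal{S}$-function $\widehat F$ whose convergents tend to a ratio of coprime genus-$0$ entire functions. Since \eqref{eq:F0asICF} corresponds to the Taylor series of $F$, the function $\widehat F$ carries that same Taylor series and, being meromorphic, must coincide with $F$; thus the fraction converges uniformly to $F$ itself. In the rational case \eqref{eq:F0asICF} is a finite expression equal to $F$, and the uniform-convergence assertion is vacuous.

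Uniqueness is then immediate: by Theorem~\ref{th:c_fractions} a power series determines its $C$-fraction of the form \eqref{F0asCCF} uniquely, so no $C$-fraction other than \eqref{eq:F0asICF} can correspond to the Taylor series of $F(z)$. The only genuinely substantive step is the finiteness of $\sum_j\beta_j$ in the non-rational case, which I would flag as the \emph{main obstacle} were it not already isolated as Lemma~\ref{lemma:decomp_decrease}; granted that lemma, the remainder is bookkeeping — correctly pairing the index shift when applying the lemma to $F_1$, and matching Taylor coefficients to pin down the analytic limit as $F$.
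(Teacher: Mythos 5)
Your proposal is correct and follows essentially the same route as the paper: develop $F$ formally via \eqref{eq:process0}--\eqref{eq:process1} (with Corollary~\ref{cr:FjS_i_Fj+1S} ensuring $\beta_j>0$), obtain $\sum_j\beta_j<\infty$ from Lemma~\ref{lemma:decomp_decrease}, then invoke Theorem~\ref{th:Stieltjes} for uniform convergence and Theorem~\ref{th:c_fractions} for the identification of the limit with $F$ and for uniqueness. Your write-up is in fact slightly more careful than the paper's, which leaves implicit the rational/non-rational case split, the passage to the coprime representatives $\tilde p_j$, and the index shift in applying Lemma~\ref{lemma:decomp_decrease}.
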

    \addtocounter{theorem}{-1}
}
\begin{proof}
    If $F=\frac{p}{q}\in\mathcal{S}$ then $F(z)$ can be formally developed into the continued
    fraction~\eqref{eq:F0asICF} with the coefficients $b_0=F(0)$ and
    $(\beta_j)_{j=0}^{\omega-1}$ given by~\eqref{eq:process0}. By
    Lemma~\ref{lemma:decomp_decrease}, the coefficients of this continued fraction satisfy the
    condition~$\sum_{j=0}^{\omega-1}\beta_j<\infty$. Consequently, Theorem~\ref{th:Stieltjes}
    implies that \eqref{eq:F0asICF} converges uniformly on compact sets containing no poles of
    its limiting function. So the rest of the proof comes from Theorem~\ref{th:c_fractions}: the
    continued fraction~\eqref{eq:F0asICF} corresponds to and converges to $F(z)$, and there is
    no other continued fraction of the form~(\ref{F0asCCF}) corresponding to $F(z)$.
\end{proof}

\section{Total nonnegativity of Hurwitz-type matrices}
This section contains the proof of Theorem~\ref{th:main1} and Corollary~\ref{cr:main2}, preceded by
several auxiliary facts.

Suppose that meromorphic functions $p$ and $q$ are regular at the origin and have the Taylor
expansion~\eqref{eq:pq_exp}. Consider the Hurwitz-type matrix~$H(p,q)$, defined
by~\eqref{eq:Hpq_def}.

\begin{lemma}\label{lemma:dec_general}
    If $\frac {q}{p}\in\mathcal S$ and there exists a meromorphic function $g(z)$, such that the
    ratios $\tilde{p}(z)\coloneqq\frac{p(z)}{g(z)}$ and $\tilde{q}(z)\coloneqq\frac{q(z)}{g(z)}$
    are entire of genus~$0$ and coprime, then the matrix $H(p,q)$ can be factored as
    in~\eqref{eq:H_inf_fact}, where the numbers $\beta_j$, $j=0,1,\dots$, are given
    by~\eqref{eq:process0} (possibly followed by zeros). Moreover,
    \[
    \big\|H(p,q) - J(b_0,\beta_0)\,J(1,\beta_1) \cdots
                J(1,\beta_j)\,H(1,1)\,T(g)\big\|_\rho\xrightarrow{j\to\infty}0,
    \]
    where $\rho$, $0<\rho\le 1$, is such that $g(z)$ has no poles in the disk $|z|\le\rho$.
\end{lemma}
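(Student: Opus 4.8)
The plan is to realize the algorithm \eqref{eq:process0}--\eqref{eq:process1} as a sequence of left multiplications by the factors $J(\cdot,\cdot)$ and then to pass to the limit in the norm $\|\cdot\|_\rho$. The cornerstone is the \emph{one-step identity}
\[
H(p_j,p_{j-1}) = J\big(p_{j-1}(0),\beta_j\big)\,H(p_{j+1},p_j),\qquad j=0,1,\dots,
\]
which I would verify directly by comparing rows. Indeed, \eqref{eq:process1} (equivalently \eqref{eq:FjasFCF}) yields the power-series identity $p_{j-1}(z)=p_{j-1}(0)\,p_j(z)+\beta_j z\,p_{j+1}(z)$, i.e. $a^{(j-1)}_k=p_{j-1}(0)\,a^{(j)}_k+\beta_j\,a^{(j+1)}_{k-1}$ for the Taylor coefficients. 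Multiplying out $J(p_{j-1}(0),\beta_j)\,H(p_{j+1},p_j)$ with the block-periodic shapes \eqref{eq:J} and \eqref{eq:Hpq_def}, the odd rows reproduce exactly the shifted coefficients of $p_{j-1}$ (via this identity and $a_0^{(j)}=1$), while the even rows reproduce the shifted coefficients of $p_j$ (via the row $[0,0,1,0,\dots]$ of $J$); this is the content of the identity. Iterating it, and using Corollary~\ref{cr:FjS_i_Fj+1S} to guarantee $\beta_j>0$ and $F_{j+1}\in\mathcal S$ together with Lemma~\ref{lemma:p_gcd} to keep $\tilde p_{j}=p_j/g$ entire of genus~$0$ and coprime, I obtain for every finite $j$ the exact relation
\[
H(p,q)=J(b_0,\beta_0)\,J(1,\beta_1)\cdots J(1,\beta_j)\,H(p_{j+1},p_j),
\]
the first factor carrying $p_{-1}(0)=b_0$ and the remaining ones $p_{i-1}(0)=1$.

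Next I would insert the common factor $g$. A short convolution computation shows the \emph{product rule} $H(ug,vg)=H(u,v)\,T(g)$: each row of $H(u,v)$ is a shifted coefficient sequence of $u$ or $v$, and right multiplication by the Toeplitz matrix $T(g)$ convolves it with $g$, producing the corresponding shifted coefficient sequence of $ug$ or $vg$. Applied to $p_{j+1}=\tilde p_{j+1}g$ and $p_j=\tilde p_j g$ this gives $H(p_{j+1},p_j)=H(\tilde p_{j+1},\tilde p_j)\,T(g)$, so that, writing $P_j\coloneqq J(b_0,\beta_0)\,J(1,\beta_1)\cdots J(1,\beta_j)$,
\[
H(p,q)=P_j\,H(\tilde p_{j+1},\tilde p_j)\,T(g)\qquad\text{for every }j.
\]

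It then remains to estimate $\|H(p,q)-P_j\,H(1,1)\,T(g)\|_\rho=\big\|P_j\big(H(\tilde p_{j+1},\tilde p_j)-H(1,1)\big)T(g)\big\|_\rho$. I would use that $\|\cdot\|_\rho$ is the operator norm of $A\mapsto A\,\mathrm{diag}(1,\rho,\rho^2,\dots)$ on $l_\infty$, so that $\|XY\|_\rho\le\|X\|_1\|Y\|_\rho$ and $\|\cdot\|_1$ is submultiplicative. Three bounds close the argument: first, $\|P_j\|_1\le\max(b_0+\beta_0,1)\prod_{i\ge1}(1+\beta_i)\le\max(b_0+\beta_0,1)\,e^{\sum_i\beta_i}$, which is finite and uniform in $j$ because $\sum_j\beta_j<\infty$ by Lemma~\ref{lemma:decomp_decrease}; second, $\|T(g)\|_\rho=\sum_{k\ge0}\rho^k|g_k|<\infty$ precisely because $g$ has no poles in $|z|\le\rho$; third, $\|H(\tilde p_{j+1},\tilde p_j)-H(1,1)\|_1\to0$ by Corollary~\ref{cr:norm_H}. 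Combining, the displayed difference is at most $\|P_j\|_1\,\|H(\tilde p_{j+1},\tilde p_j)-H(1,1)\|_1\,\|T(g)\|_\rho\to0$, which both proves the factorization \eqref{eq:H_inf_fact} and yields the stated rate.

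Finally, the case when $F=q/p$ is rational (including constant) is easier and needs no limit: the algorithm terminates at some finite $\omega$ with $F_\omega\equiv1$, so $\tilde p_\omega=\tilde p_{\omega-1}\equiv1$ and $H(p_\omega,p_{\omega-1})=H(1,1)\,T(g)$; the one-step identity then gives the \emph{exact} finite factorization \eqref{eq:H_fin_fact}, and appending any further $J(1,0)$ leaves $H(1,1)$ unchanged by \eqref{eq:J(c,0)H(1,1)}, so the difference above is identically $0$ for $j\ge\omega-1$. I expect the main obstacle to be the norm bookkeeping in the penultimate step---namely setting up the mixed submultiplicativity $\|XY\|_\rho\le\|X\|_1\|Y\|_\rho$ correctly and securing the uniform bound on $\|P_j\|_1$ from $\sum\beta_j<\infty$---since the algebraic identities, once the indices are aligned, are routine row-by-row verifications.
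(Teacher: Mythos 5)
Your skeleton is the same as the paper's: the one-step identity $H_i=J(p_{i-1}(0),\beta_i)\,H_{i+1}$ verified row by row, the peeling identity $H(\tilde pg,\tilde qg)=H(\tilde p,\tilde q)\,T(g)$ (the paper's~\eqref{eq:Hpjqj_dec}), the mixed-norm inequality $\|XY\|_\rho\le\|X\|_1\|Y\|_\rho$ (the paper's~\eqref{eq:norm_1rho}), $\sum_j\beta_j<\infty$ from Lemma~\ref{lemma:decomp_decrease}, and $\|H(\tilde p_{j+1},\tilde p_j)-H(1,1)\|_1\to0$ from Corollary~\ref{cr:norm_H}. Where you differ is the treatment of the prefix $P_j=J(b_0,\beta_0)\cdots J(1,\beta_j)$: you replace the paper's careful entrywise analysis by the crude but uniform bound $\|P_j\|_1\le\max(b_0+\beta_0,1)\,e^{\sum_i\beta_i}$, and this is indeed enough for the ``Moreover'' clause, i.e.\ for $\|H(p,q)-P_jH(1,1)T(g)\|_\rho\to0$. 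That part of your argument is correct and is genuinely shorter than the paper's.

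However, there is a gap relative to what \eqref{eq:H_inf_fact} actually asserts: the limit there is taken of the parenthesized $J$-product alone, so the lemma claims that $\lim_{j\to\infty}P_j$ \emph{exists} as a matrix, and your uniform bound on $\|P_j\|_1$ does not deliver this. The naive Cauchy attempt fails: $P_{j+1}-P_j=P_j\bigl(J(1,\beta_{j+1})-I\bigr)$, and $J(1,\beta)-I$ is \emph{not} small (already $J(1,0)\ne I$ because of the shift structure in~\eqref{eq:J}), so summability of the $\beta_j$ gives nothing directly. This is precisely why the paper spends the second half of its proof splitting $P_j=V\,U_j$ with $\sum_{j\ge j_0}\beta_j<\tfrac12$, proving the entrywise bounds~\eqref{eq:U_bound}, $\|U_j\|_1\le2$, and the Cauchy estimate $\|U_{j+1}-U_j\|_1\le4\beta_j+2\beta_{j+1}\to0$, which produces the limit matrix $U_*$ and then concludes via the two-term estimate~\eqref{eq:VUHT_vanish}. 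A cheap partial patch within your framework: since $J(1,0)H(1,1)=H(1,1)$ by~\eqref{eq:J(c,0)H(1,1)}, one has
\begin{equation*}
\bigl\|(P_{j+1}-P_j)H(1,1)\bigr\|_1
=\bigl\|P_j\bigl(J(1,\beta_{j+1})-J(1,0)\bigr)H(1,1)\bigr\|_1
\le\|P_j\|_1\,\beta_{j+1}\,\|H(1,1)\|_1,
\end{equation*}
which is summable, so $P_jH(1,1)$ converges in $\|\cdot\|_1$; but convergence of $P_j$ itself, as displayed in~\eqref{eq:H_inf_fact}, still requires an argument of the paper's type. A minor additional slip: in the terminating case the final constant is $F_\omega\equiv b_0$ when $\omega=0$ (not $1$); that case is handled directly by $J(b_0,0)H(1,1)=H(1,b_0)$ together with~\eqref{eq:Hpjqj_dec}, as in~\eqref{eq:H_fin_fact}.
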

\begin{proof}
    For any two matrices $A=(a_{kl})_{k,l=1}^\infty$ and $B=(b_{kl})_{k,l=1}^\infty$ such that
    $\|A\|_1<\infty$ and $\|B\|_\rho<\infty$ the following estimate (implying the existence of the
    product~$AB$) is true
    \begin{multline}\label{eq:norm_1rho}
        \infty>\|A\|_1\|B\|_\rho
        =\sup_{1\le k<\infty}\sum_{l=1}^\infty |a_{kl}|\sup_{1\le m<\infty}\sum_{j=1}^\infty |b_{mj}|\rho^{j-1}
        \ge\\
        \sup_{1\le k<\infty}\sum_{l=1}^\infty |a_{kl}|\sum_{j=1}^\infty |b_{lj}|\rho^{j-1}
        \ge \sup_{1\le k<\infty}\sum_{j=1}^\infty\left|\sum_{l=1}^\infty a_{kl}b_{lj}\right|\rho^{j-1}
        = \|AB\|_\rho.
    \end{multline}

    Now we note that the decomposition
    \begin{equation}\label{eq:Hpjqj_dec}
        H(p,q)=H(\tilde{p},\tilde{q})\,T(g)
    \end{equation}
    is valid. It can be checked by the straightforward multiplication.

    Denote~$p_0(z)\coloneqq\tilde{p}(z)$ and $p_{-1}(z)\coloneqq\tilde{q}(z)$. We are now using
    the algorithm~\eqref{eq:process0}--\eqref{eq:process1} to construct the (longest possible)
    sequence~$(p_j)_{j=-1}^{\omega}$ of entire functions, $\relpenalty=10000 0\le \omega\le\infty$. By
    Corollary~\ref{cr:FjS_i_Fj+1S}, the corresponding numbers~$(\beta_j)_{j=0}^{\omega}$ satisfy
    $\beta_j>0$ for all $j=0,1,\dots \omega-1$. In the case of finite $\omega$ we have
    $p_{\omega-1}(z)\equiv p_{\omega-1}(0)$, $p_{\omega}(z)\equiv 1$ and $\beta_{\omega}=0$; we
    extend the latter equality by $\beta_{\omega+1}=\beta_{\omega+2}=\dots=0$.

    The identity \eqref{eq:Hpjqj_dec} implies the factorization~\eqref{eq:H_fin_fact} in the
    case of~$q(z)\equiv q(0)p(z)$ (corresponding to~$\omega=0$). Suppose that $\omega>0$. If we
    expand $p_i(z)$ and $p_{i-1}(z)$, $i=0,1,\dots j< \omega$, as follows
    \[
    p_i(z)=\sum_{k=0}^\infty\,c_kz^k
    \quad\text{and}\quad
    p_{i-1}(z)=\sum_{k=0}^\infty\,d_kz^k,
    \]
    then the matrix~$H_{i+1}\coloneqq H(p_{i+1},p_i)$ takes the form
    \begin{equation*}
        H_{i+1}=
        {\begin{pmatrix}
                {c_0} & {c_1} & {c_2} & {c_3} & \hdots\\
                0 & \frac 1{\beta_i}(d_1-d_0c_1) &  \frac 1{\beta_i}(d_2-d_0c_2) &
                \frac 1{\beta_i}(c_0d_3-d_0c_3) & \hdots\\
                0 &{c_0} & {c_1} & {c_2} & \hdots\\
                0 & 0 & \frac 1{\beta_i}(d_1-d_0c_1) &  \frac 1{\beta_i}(d_2-d_0c_2) & \hdots\\
                0 & 0 &{c_0} & {c_1} & \hdots\\
                \vdots & \vdots & \vdots & \vdots & \ddots
            \end{pmatrix}}.
    \end{equation*}
    Left-multiplying this matrix by
    \begin{equation*}
        J(d_0,\beta_i)={\begin{pmatrix}
                d_0 & \beta_i & 0 & 0 & 0 & 0 & \hdots\\
                0 & 0 & 1 & 0 & 0 & 0 & \hdots\\
                0 & 0 & d_0 & \beta_i & 0 & 0 & \hdots\\
                0 & 0 & 0 & 0 & 1 & 0 & \hdots\\
                0 & 0 & 0 & 0 & d_0 & \beta_i & \hdots\\
                \vdots & \vdots & \vdots & \vdots & \vdots & \vdots & \ddots
            \end{pmatrix}},
    \end{equation*}
    we obtain~$H_i$. On putting $i$ successively equal to $0$, $1$, \dots, $j$ and
    applying~\eqref{eq:Hpjqj_dec} we find that
    \begin{equation}\label{eq:H_factor}
    H(p,q)=J(b_0,\beta_0)\,J(1,\beta_1)\cdots J(1,\beta_j)\,H(p_{j+1},p_j)\,T(g).
    \end{equation}
    The finite product of the matrices is well defined and associative, since the matrices
    $J(1,\cdot)$ and $H(1,1)$ have at most two nonzero entries in each row and column. So if
    $\omega$ is a finite number, then for~$j=\omega-1$ the equality~\eqref{eq:H_factor}
    coincides with~\eqref{eq:H_fin_fact}. Since $\|T(g)\|_\rho=g(\rho)<\infty$, from
    \eqref{eq:J(c,0)H(1,1)},~\eqref{eq:norm_1rho} and Corollary~\ref{cr:norm_H} we obtain the
    assertion of the theorem for~$\omega<\infty$.

    Suppose that $\omega=\infty$ and let us prove that the difference between the product
    in~\eqref{eq:H_factor} and the right-hand side of~\eqref{eq:H_inf_fact} converges to zero as
    $j\to\infty$. There exists an index $j_0\ge 1$ such that
    \begin{equation}\label{eq:beta_less_1_2}
        \sum_{j=j_0}^\infty\beta_j<\frac{1}{2}.
    \end{equation}
    Let
    \[
    V\coloneqq J(b_0,\beta_0)\,J(1,\beta_1) \cdots J(1,\beta_{j_0-1}) \text{ \ and \ }
    U_j\coloneqq J(1,\beta_{j_0})\dotsb J(1,\beta_j)
    \]
    for $j=j_0,j_0+1,\dots$. Then we can express the equality~\eqref{eq:H_factor} as follows
    \[H(p,q)= V\,U_j\,H(p_{j+1},p_{j})\,T(g).\]

    The matrix~$U_j=J(1,\beta_{j_0})\dotsb J(1,\beta_j)$ is upper triangular and has no negative
    entries since it is a product of upper triangular matrices with nonnegative entries. The
    diagonal elements of~$U_j$ are the products of corresponding diagonal elements of
    $J(1,\beta_{j_0})$, \dots, $J(1,\beta_j)$ and, thus, are equal to~$1$ on the odd rows
    and~$0$ on the even ones.
    
    More specifically, denote the entries of~$U_j$ by~$u^{(j)}_{kl}$ so that
    \[ U_j=\left(u^{(j)}_{kl}\right)_{k,l=1}^\infty.\]
    For $j\ge j_0$, $k,m=1,2,\dots$ the equality $U_{j+1}=U_j\,J(1,\beta_{j+1})$ implies
    \[
    u^{(j+1)}_{k,1} = u^{(j)}_{k,1},\qquad
    u^{(j+1)}_{k,2m} = u^{(j)}_{k,2m-1}\,\beta_{j+1},\qquad
    u^{(j+1)}_{k,2m+1} = u^{(j)}_{k,2m} + u^{(j)}_{k,2m+1}.
    \]
    The following entries for all $j\ge j_0$ must be zero
    \[
    u^{(j)}_{k,2m-1} = u^{(j)}_{k,2m} = 0,\quad m=1,2,\dots,\quad k=2m,2m+1,\dots.
    \]

    For $j=j_0$ we have $U_j=J(1,\beta_j)$, so the nonzero entries of $U_j$ are only
    \[
    u^{(j)}_{2m-1,2m}=\beta_j,\quad u^{(j)}_{2m,2m+1}=1\quad
    \text{and}\quad u^{(j)}_{2m+1,2m+1}=1, \quad\text{where}\quad m=1,2,\dots.
    \]
    Consequently, the following estimate is valid
    \begin{equation}\label{eq:U_bound}
        u^{(j)}_{k,2m}+u^{(j)}_{k,2m+1}
        \le\left({\sum_{i=j_0}^{j}}\beta_i\right)^{\mathrlap{m-\left\lfloor\frac{k}{2}\right\rfloor}},
        \quad m=1,2,\dots,\quad k=1,2,\dots,2m.
    \end{equation}
    Suppose that~\eqref{eq:U_bound} holds for some~$j\ge j_0$, then for $k\le 2m$ we have
    \begin{multline}\nonumber
        u^{(j+1)}_{k,2m}+u^{(j+1)}_{k,2m+1} = u^{(j)}_{k,2m-1}\,\beta_{j+1} + \left(u^{(j)}_{k,2m} +
            u^{(j)}_{k,2m+1}\right)\le\\
        (u^{(j)}_{k,2m-2}+u^{(j)}_{k,2m-1})\,\beta_{j+1} +
        \left(u^{(j)}_{k,2m} + u^{(j)}_{k,2m+1}\right)
        \le\\
        \left( \beta_{j+1} + \sum_{i=j_0}^{j}\beta_i\right)
        \left(\sum_{i=j_0}^{j}\beta_i\right)^{m-1-\left\lfloor\frac{k}{2}\right\rfloor}
        \!\!\!\!\le
        \left(\sum_{i=j_0}^{j+1}\beta_i\right)^{\mathrlap{m-\left\lfloor\frac{k}{2}\right\rfloor}}
        \mathrlap{.}^{\phantom{m-\left\lfloor\frac{k}{2}\right\rfloor}}
    \end{multline}
    By induction, the conditions~\eqref{eq:U_bound} hold for all $j\ge j_0$. Therefore,
    by~\eqref{eq:beta_less_1_2},
    \[
        u^{(j)}_{k,2m}+u^{(j)}_{k,2m+1}
        \le
        \big({\textstyle\sum_{i=j_0}^\infty}\beta_i\big)^{m-\left\lfloor\frac{k}{2}\right\rfloor}
        \le 2^{-m+\left\lfloor\frac{k}{2}\right\rfloor},
    \]
    where $m=1,2,\dots$ and $k=1,2,\dots,2m$. As a consequence,
    \[
    \|U_j\|_1 = \sup_{1\le k<\infty}\left(u^{(j)}_{k,1}+\sum_{m=1}^\infty\left(
            u^{(j)}_{k,2m} + u^{(j)}_{k,2m+1}\right)\right)
    \le\sum_{m=0}^\infty2^{-m}=2.
    \]
    Since $u^{(j+1)}_{k,1} - u^{(j)}_{k,1}=0$ and
    $u^{(j+1)}_{k,2m-1} - u^{(j)}_{k,2m-1}=u^{(j)}_{k,2m-2}$, $m>1$, we have
    \begin{multline}\nonumber
        \|U_{j+1}-U_j\|_1 = \sup_{1\le k<\infty}\sum_{m=1}^\infty\left(
            \left|u^{(j+1)}_{k,2m-1} - u^{(j)}_{k,2m-1}\right|+
            \left|u^{(j+1)}_{k,2m} - u^{(j)}_{k,2m}\right|\right)\le\\
        \sup_{1\le k<\infty}\left(\sum_{m=2}^\infty u^{(j)}_{k,2m-2} +
            \sum_{m=1}^\infty u^{(j)}_{k,2m} +
            \sum_{m=1}^\infty u^{(j+1)}_{k,2m}
        \right) \le\\
        \sup_{1\le k<\infty}\left(2\sum_{m=1}^\infty u^{(j)}_{k,2m} +
            \sum_{m=1}^\infty u^{(j+1)}_{k,2m}
        \right) \le\\
        \sup_{1\le k<\infty}\Bigg(2 \beta_j \sum_{m=1}^\infty u^{(j-1)}_{k,2m-1} +
            \beta_{j+1} \sum_{m=1}^\infty u^{(j)}_{k,2m-1}
        \Bigg)\le\\
        2 \beta_j \|U_{j-1}\|_1 + \beta_{j+1} \|U_{j}\|_1
        \le
        4\beta_j+2\beta_{j+1} \xrightarrow{j\to\infty} 0.
    \end{multline}
    That is, $(U_j)_{j=j_0}^{\raisebox{0.3\height}{$\scriptstyle\infty$}}$ is a Cauchy sequence,
    hence it converges to its entry-wise limit~$U_*$. So we have shown that $\|U_j\|_1$ is
    bounded uniformly in $j$ and there exists $U_*$ satisfying $\|U_j-U_*\|_1\xrightarrow{j\to\infty}0$.
    
    Using~\eqref{eq:norm_1rho} we obtain
    $\|V\|_1 \le \|J(b_0,\beta_0)\|_1\|J(1,\beta_1)\|_1\cdots\|J(1,\beta_{j_0-1})\|_1<\infty$ and
    \begin{multline}\label{eq:VUHT_vanish}
        \big\|V\,U_j\,H_{j+1}\,T(g) - V\,U_*\,H(1,1)\,T(g)\big\|_\rho\le\\
        \big\|V\|_1\,\|U_j\,H_{j+1} - U_*\,H_{j+1} +U_*\,H_{j+1} - U_*\,H(1,1)\big\|_1\,\|T(g)\|_\rho\le\\
        \big\|V\big\|_1\big\|U_j - U_*\big\|_1
        \big\|H_{j+1}\big\|_1\big\|T(g)\big\|_\rho
        + \big\|V\big\|_1\big\|U_*\big\|_1
        \big\|H_{j+1}-H(1,1)\big\|_1\big\|T(g)\big\|_\rho.
    \end{multline}
    The expansion of~$g(z)$ into a power series at the origin converges for $|z|\le\rho$
    absolutely, hence~$\|T(g)\|_\rho<\infty$. According to Corollary~\ref{cr:norm_H},
    \[
    \|H_{j+1}\|_1 = \|H(p_{j+1},p_j)\|_1 < M\text{\quad and\quad}
    \|H(p_{j+1},p_j)-H(1,1)\|_1\xrightarrow{j\to\infty} 0,
    \]
    so the right-hand side of~\eqref{eq:VUHT_vanish} vanishes and, therefore,
    \[
    H(p,q)=VU_*H(1,1)T(g),\quad\text{where}\quad
    VU_*=\lim_{j\to\infty} \left(J(b_0,\beta_0)\,J(1,\beta_1) \cdots J(1,\beta_j)\right).
    \]
\end{proof}
\begin{remark}
    Applying this lemma to the ratio $\frac{p_{j_0-1}(z)}{p_{j_0}(z)}$ we can explicitly determine the
    matrix~$U_*$. Since
    \[u^{(j+1)}_{k,2m}=\beta_{j+1}u^{(j)}_{k,2m-1}\xrightarrow{j\to\infty}0\text{ for
    }m=1,2,\dots\text{ and }k=2m,2m+1,\dots,\]
    from $H_{j_0}=U_*H(1,1)$ we get
    \[U_* = H_{j_0}\,H^{\mathsf{T}}(0,1),\]
    where $A^{\mathsf{T}}$ stands for the transpose of a matrix $A$.
\end{remark}

Now consider meromorphic functions $p(z)\eqqcolon p_0(z)$ and
$q(z)\eqqcolon p_{-1}(z)\not\equiv{0}$, $q(0)\ge 0$, with the power series expansions given
by~\eqref{eq:pq_exp}. Suppose that~$\beta_0$, $\beta_1$, \dots, $\beta_{j-1}\ne 0$. Then we can define
the functions~$p_1(z)$, $p_2(z)$, \dots,~$p_j(z)$ via the formul\ae~\eqref{eq:process1}.

\begin{lemma}\label{lemma:PNNSP}
    If the Hurwitz-type matrix $H_j\coloneqq H(p_j,p_{j-1})$ satisfies the conditions
    \begin{equation}\label{eq:PartPos}
        H_j\pmat 2&3&\hdots&k-1&k\\2&3&\hdots&k-1&i\epmat \ge 0
        \text{, where } k=2,3,4,\dots\text{ and } i = k, k+1, k+2,\dots,
    \end{equation}
    then
    \[p_{j-1}(0)\,p_j(z)\equiv p_{j-1}(z) \iff \beta_j \coloneqq H_j\pmat 2&3\\2&3\epmat=0.\]
\end{lemma}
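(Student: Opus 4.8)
The plan is to translate the entire statement into the Taylor coefficients of $p_j$ and $p_{j-1}$, after which it collapses to two scalar inequalities read off from~\eqref{eq:PartPos}. Write $p_j(z)=\sum_{k\ge0}c_kz^k$ and $p_{j-1}(z)=\sum_{k\ge0}d_kz^k$. The normalisation built into~\eqref{eq:process1} gives $c_0=p_j(0)=1$, while $d_0=p_{j-1}(0)$ is a nonnegative constant. Reading rows~$2,3$ against columns~$2,3$ of $H_j$ yields $\beta_j=H_j\pmat2&3\\2&3\epmat=c_0d_1-c_1d_0=d_1-d_0c_1$. It is therefore natural to set $e_k\coloneqq d_k-d_0c_k$, so that the assertion $p_{j-1}(0)\,p_j\equiv p_{j-1}$ is precisely the condition $e_k=0$ for all $k$, while automatically $e_0=0$ and $e_1=\beta_j$.

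The implication $p_{j-1}(0)\,p_j\equiv p_{j-1}\Rightarrow\beta_j=0$ is then immediate: the hypothesis says every $e_k$ vanishes, in particular $e_1=\beta_j=0$, and no appeal to~\eqref{eq:PartPos} is required.

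For the converse I would assume $\beta_j=0$, i.e.\ $e_1=0$, together with the partial positivity~\eqref{eq:PartPos}. First I evaluate the minors of shape $k=3$: for $i\ge3$,
\[
H_j\pmat2&3\\2&i\epmat=\det\begin{pmatrix} c_0&c_{i-2}\\ d_0&d_{i-2}\end{pmatrix}=d_{i-2}-d_0c_{i-2}=e_{i-2},
\]
so~\eqref{eq:PartPos} gives $e_k\ge0$ for every $k\ge1$. Next I expand the minors of shape $k=4$: for $i\ge4$,
\[
H_j\pmat2&3&4\\2&3&i\epmat=\det\begin{pmatrix} c_0&c_1&c_{i-2}\\ d_0&d_1&d_{i-2}\\ 0&c_0&c_{i-3}\end{pmatrix}=c_{i-3}\,e_1-e_{i-2},
\]
the last equality being a direct $3\times3$ expansion that uses $c_0=1$ (equivalently, subtract $d_0$ times the first row from the second, turning it into $(0,e_1,e_{i-2})$, and expand along the first column). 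Since $e_1=0$, this minor equals $-e_{i-2}$, and~\eqref{eq:PartPos} forces $-e_{i-2}\ge0$, that is $e_k\le0$ for all $k\ge2$.

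Combining the two estimates yields $e_k=0$ for $k\ge2$; together with $e_0=0$ and $e_1=\beta_j=0$ this gives $e_k=0$ for all $k$, i.e.\ $p_{j-1}\equiv d_0p_j=p_{j-1}(0)\,p_j$, as required. Note that the argument uses only $c_0=1$ and $e_1=0$, so it is uniform in the value of $d_0$ (and is consistent with the standing assumption $q=p_{-1}\not\equiv0$, which rules out the degenerate outcome $p_{j-1}\equiv0$ when $j=0$). The single delicate point is the bookkeeping in the $k=4$ minor, namely isolating the surviving term $-e_{i-2}$ after eliminating the $p_{j-1}$-row against the $p_j$-row; once $c_0=1$ and $e_1=0$ are inserted this is a one-line computation, and I expect no genuine obstacle beyond this careful indexing.
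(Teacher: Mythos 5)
Your proposal is correct and is essentially the paper's own argument in contrapositive clothing: the paper likewise reads off $e_i=b_i-b_0a_i\ge 0$ from the $2\times 2$ minors on rows $2,3$ and, using $\beta_j=0$, computes the $3\times 3$ minor on rows $2,3,4$ to be $-e_i$, deriving a sign contradiction for a single index with $e_i\ne 0$ rather than running your uniform-in-$i$ computation of $e_k\ge 0$ and $e_k\le 0$. The minors used and the row-reduction are identical, and there is no gap.
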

\begin{proof}
    Without loss of generality we assume $j=0$. Suppose that~$b_0\,p_0\equiv p_{-1}$. Then the
    minor~$\beta_0=H_0\pmat 2&3\\2&3\epmat$ has two proportional rows, and is therefore zero.

    The converse we prove by contradiction. Let $\beta_0 = b_1-a_1b_0=0$ and
    $p_0(z) \not\equiv b_0\,p_{-1}(z)$. Then there exists an integer $i>1$ such that
    $b_i\ne b_0a_i$ (since $p_{-1}(z)\not\equiv 0$). Therefore, according to~\eqref{eq:PartPos}
    we have
    \begin{equation*}
        H_0\pmat 2&3\\2&i+1\epmat
        = \begin{vmatrix}a_0&a_i\\b_0&b_i\end{vmatrix}>0.
    \end{equation*}
    Consequently,
    \[
    H_0\pmat 2&3&4\\2&3&i+2\epmat
    = \begin{vmatrix}a_0&a_1&a_i\\b_0&b_1&b_i\\0&a_0&a_{i-1}\end{vmatrix}
    = - a_0\begin{vmatrix}a_0&a_i\\b_0&b_i\end{vmatrix}<0,
    \]
    which contradicts the conditions~\eqref{eq:PartPos}.
\end{proof}

\begin{corollary}\label{cr:TNN-decomp-hurw-matr}
    Let meromorphic functions $p(z)$ and $q(z)$ be such that $p(0)=1$ and $q(0)\ge 0$. If the
    matrix~$H(p,q)$ satisfies the conditions~\eqref{eq:PartPos}, then
    $F\coloneqq\frac {q}{p}\in\mathcal S$.
\end{corollary}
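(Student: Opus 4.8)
The plan is to prove Corollary~\ref{cr:TNN-decomp-hurw-matr} by reducing it to the machinery already established for the class~$\mathcal{S}$, namely Theorem~\ref{th:cheb_mei_pos} together with Lemma~\ref{lemma:PNNSP}. The goal is to show that the nonnegativity of the particular minors in~\eqref{eq:PartPos} forces $F=\frac{q}{p}$ into~$\mathcal{S}$, and the natural route is to run the recurrence~\eqref{eq:process0}--\eqref{eq:process1} and verify that the minor condition propagates down the chain $H_0,H_1,H_2,\dots$ until it either terminates (producing a rational $\mathcal{S}$-function) or continues indefinitely (matching the sign pattern~\eqref{eq:minorseq} characterizing~$\mathcal{S}$).

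First I would set $p_0\coloneqq p$, $p_{-1}\coloneqq q$, $H_0\coloneqq H(p,q)$, and observe that the hypothesis~\eqref{eq:PartPos} for $H_0$ contains in particular the information about the leading $\beta_0=H_0^{(3)}\ge 0$. The key engine is Lemma~\ref{lemma:PNNSP}: under~\eqref{eq:PartPos}, the vanishing $\beta_j=0$ is \emph{equivalent} to $p_{j-1}(0)\,p_j\equiv p_{j-1}$, i.e.\ to $F_j$ being constant. Next I would show that the partial-nonnegativity condition~\eqref{eq:PartPos} is inherited by $H_{j+1}$ from $H_j$ whenever $\beta_j>0$; this is exactly where Lemma~\ref{lemma:H_lpm} does the work, since it expresses each minor of $H_j$ (of the shape appearing in~\eqref{eq:PartPos}) as $\beta_j^{\lfloor k/2\rfloor}$ times the corresponding minor of $H_{j+1}$. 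Dividing by the positive factor $\beta_j^{\lfloor k/2\rfloor}$ transfers the sign condition from the $H_j$-minors to the $H_{j+1}$-minors, so the hypothesis regenerates itself at each step.

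With inheritance in hand, I would argue by iterating. Either the process terminates at some finite $\omega$ because $\beta_{\omega-2}=0$ (equivalently $F_{\omega-2}$ is constant by Lemma~\ref{lemma:PNNSP}), or $\beta_j>0$ for all $j$. In the first case, feeding $\beta_0,\dots,\beta_{\omega-3}>0$ and $\beta_{\omega-2}=0$ back through the identity~\eqref{eq:H_jk_beta} recovers the exact minor pattern $H^{(k)}>0$ for $k=2,\dots,\omega$ and $H^{(\omega+1)}=H^{(\omega+2)}=\dots=0$, which is precisely~\eqref{eq:minorseq}, so Theorem~\ref{th:cheb_mei_pos} gives $F\in\mathcal{S}$. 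In the second case, the same identity~\eqref{eq:H_jk_beta} yields $H^{(k)}>0$ for every $k\ge 2$, again matching~\eqref{eq:minorseq} with $\omega=\infty$, and Theorem~\ref{th:cheb_mei_pos} concludes $F\in\mathcal{S}$. Throughout, the normalization $p(0)=1$ and $q(0)\ge 0$ guarantees the recurrence is well posed and that $F_j(0)\ge 0$ at the relevant initial step, matching the sign requirement $b_0\ge 0$ feeding into Theorem~\ref{th:cheb_mei_pos}.

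The main obstacle I anticipate is making the inheritance step fully rigorous: I must confirm that the \emph{whole} family of minors required in~\eqref{eq:PartPos} for $H_{j+1}$ — i.e.\ all $k\ge 2$ and all $i\ge k$ — is covered by Lemma~\ref{lemma:H_lpm} applied to $H_j$, and that no minor outside the scope of that lemma is silently needed. A secondary subtlety is the borderline behaviour when $\beta_j=0$ is reached: here one must invoke Lemma~\ref{lemma:PNNSP} to identify this with $F_j$ being constant and then check that all subsequent minors $H_j^{(k)}$ for $k\ge 3$ vanish because the relevant rows become proportional, so that~\eqref{eq:minorseq} holds with a finite $\omega$ rather than being violated. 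Once these two points are secured, the reduction to Theorem~\ref{th:cheb_mei_pos} is immediate.
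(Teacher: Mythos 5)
Your proposal is correct and follows essentially the same route as the paper: the same recurrence \eqref{eq:process0}--\eqref{eq:process1}, with Lemma~\ref{lemma:H_lpm} propagating the minor conditions \eqref{eq:PartPos} from $H_j$ to $H_{j+1}$, Lemma~\ref{lemma:PNNSP} forcing the dichotomy $\beta_j>0$ versus $F_j$ constant, and Theorem~\ref{th:cheb_mei_pos} concluding $F\in\mathcal{S}$ in both the terminating and non-terminating cases. The only (harmless) omission is the trivial case $q(z)\equiv 0$, which the paper dispatches in one line before invoking Lemma~\ref{lemma:PNNSP}, whose statement assumes $p_{-1}(z)\not\equiv 0$.
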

\begin{proof}
    For $q(z)\equiv 0$ this corollary is obvious. Suppose that $q(z)\not\equiv 0$. Set
    $\relpenalty=10000 p_0(z)\coloneqq p(z)$ and $p_{-1}(z)\coloneqq q(z)$ such that $F(z)=F_0(z)$.

    Suppose that for some $j\ge 0$ we have constructed the sequences $p_{-1}(z),\dots,p_j(z)$ and
    $\beta_0,\dots,\beta_{j-1}>0$. By Lemma~\ref{lemma:H_lpm} the matrix~$H_{j}$
    satisfies~\eqref{eq:PartPos}. Therefore, according to Lemma~\ref{lemma:PNNSP}, we have two
    mutually exclusive possibilities: $\beta_j>0$ and $p_{j-1}(0)\cdot p_j(z)=p_{j-1}(z)$. Consider the
    latter case. We have~$H_j^{(2)} = p_{j}(0)=1$ and $H_j^{(3)}=H_j^{(4)}=\dots=0$ with the
    notation~\eqref{eq:H_minor_sm}. Additionally, the numbers $\beta_i$ are positive for all
    $i=0,1,\dots,j-1$. By virtue of Lemma~\ref{lemma:H_lpm} the minors~$H_0^{(k)}$ are positive
    for~$k=2,\dots,j+2$, and zero for $k>j+2$. So by Theorem~\ref{th:cheb_mei_pos} the
    considered function~$F(z)$ is a rational $\mathcal{S}$-function.
    
    If $\beta_j>0$ we can define the function~$p_{j+1}(z)$ by~\eqref{eq:process1}. According to
    Lemma~\ref{lemma:H_lpm} the matrix~$H_{j+1}$ satisfies~\eqref{eq:PartPos}. So we can make
    the next step of this algorithm. If this process is infinite, by Lemma~\ref{lemma:H_lpm} all
    the principal minors~$H_0^{(k)}$ are positive.  Hence~$F\in\mathcal S$.
\end{proof}
{
    \renewcommand{\thetheorem}{\ref{th:main1}}
\begin{theorem}
    Consider the ratio~$F(z)=\frac{q(z)}{p(z)}$ of power
    series~$p(z)=\sum_{k=0}^\infty\,a_kz^k$ and $q(z)=\sum_{k=0}^\infty\,b_kz^k$, normalized by
    the equality~$p(0)=a_0=1$. The following conditions are equivalent:
    \begin{enumerate}
    \item\label{item:1} The infinite Hurwitz-type matrix
        \begin{equation}\label{eq:Hpq_def2}
            H(p,q)={\begin{pmatrix}
                    b_0& b_1& b_2 & b_3 & b_4 & b_5 & \hdots\\
                    0 & a_0 & a_1 & a_2 & a_3 & a_4 & \hdots\\
                    0 & b_0 & b_1 & b_2 & b_3 & b_4 & \hdots\\
                    0 &   0 & a_0 & a_1 & a_2 & a_3 & \hdots\\
                    0 &   0 & b_0 & b_1 & b_2 & b_3 & \hdots\\
                    \vdots & \vdots & \vdots & \vdots & \vdots & \vdots & \ddots
                \end{pmatrix}}
        \end{equation}
        is totally nonnegative.
    \item \label{item:2} The matrix $H(p,q)$ possesses the infinite factorization
        \begin{equation}\label{eq:H_inf_fact2}
            H(p,q)= \lim_{j\to\infty} \big( J(b_0,\beta_0)\,J(1,\beta_1) \cdots
            J(1,\beta_j)\big) \,H(1,1)\,T(g),
        \end{equation}
        converging in~$\|\cdot\|_\rho$-norm for some $\rho$, $0<\rho\le 1$. Here $b_0\ge 0$ and the
        sequence $(\beta_j)_{j\ge{0}}$ is nonnegative, has a finite sum and contains no zeros
        followed by a nonzero entry, that is
        \[
        \begin{gathered}
            \beta_0,\beta_1,\dots,\beta_{\omega-1}>0,
            \quad\beta_{\omega}=\beta_{\omega+1}=\dots=0,\\
            0\le\omega\le\infty,
            \quad\text{and}
            \quad\sum_{j=0}^\infty\beta_j<\infty.
        \end{gathered}
        \]
        The matrix~$T(g)$ denotes a totally nonnegative Toeplitz matrix of the
        form~\eqref{eq:T_form} with ones on its main diagonal.
    \item \label{item:3} The ratio~$F(z)$ is a meromorphic $\mathcal S$-function; its numerator $q(z)$ and
        denominator $p(z)$ are entire functions of genus~$0$ up to a common meromorphic factor~$g(z)$
        of the form~\eqref{eq:tnns_generator}, $g(0)=1$.
    \end{enumerate}
\end{theorem}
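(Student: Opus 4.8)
The plan is to run the cycle \ref{item:1}$\Rightarrow$\ref{item:3}$\Rightarrow$\ref{item:2}$\Rightarrow$\ref{item:1}, so that each implication uses only the facts assembled above.

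For \ref{item:1}$\Rightarrow$\ref{item:3} the starting observation is that the two Toeplitz matrices $T(p)$ and $T(q)$ sit inside $H(p,q)$ as submatrices: selecting the even-numbered rows of~\eqref{eq:Hpq_def2} together with the columns $2,3,4,\dots$ reproduces $T(p)$, while the odd-numbered rows together with all columns reproduce $T(q)$. Total nonnegativity is inherited by submatrices, so $T(p)$ and $T(q)$ are totally nonnegative, and Theorem~\ref{th:AESW} forces both $p$ and $q$ to be of the form~\eqref{eq:tnns_generator} (when $b_0=0$ one first splits off the leading power $z^s$ from $q$ and applies Theorem~\ref{th:AESW} to the remaining series, whose constant term is positive). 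In particular $p$ and $q$ are meromorphic with only negative zeros and positive poles. Simultaneously the minors in~\eqref{eq:PartPos} are nonnegative, so Corollary~\ref{cr:TNN-decomp-hurw-matr} gives $F=\frac qp\in\mathcal S$. It then remains to exhibit the common factor: let $\tilde p$ be the genus-$0$ product over the (negative) poles of $F$, put $\tilde q\coloneqq F\tilde p$ and $g\coloneqq\frac p{\tilde p}$. The functions $\tilde p,\tilde q$ are coprime genus-$0$ entire functions with $\frac{\tilde q}{\tilde p}=F$, and since the zeros of $\tilde p$ are among those of $p$, the quotient $g$ retains exactly the positive poles and a subset of the negative zeros of $p$ together with the same nonnegative exponential factor; hence $g$ has the form~\eqref{eq:tnns_generator}, which is~\ref{item:3}.

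For \ref{item:3}$\Rightarrow$\ref{item:2} the hypotheses are precisely those of Lemma~\ref{lemma:dec_general}, which produces the factorization~\eqref{eq:H_inf_fact2} with coefficients $\beta_j$ given by~\eqref{eq:process0} and with convergence in $\|\cdot\|_\rho$ for any $\rho$ below the radius of convergence of $g$. That the $\beta_j$ are positive up to some index $\omega$ and vanish afterwards is Corollary~\ref{cr:FjS_i_Fj+1S}, and the summability $\sum_j\beta_j<\infty$ demanded in~\eqref{eq:H_inf_fact_beta} follows from Lemma~\ref{lemma:decomp_decrease} in the non-rational case and is automatic when $\omega<\infty$. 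Finally $T(g)$ is totally nonnegative by Theorem~\ref{th:AESW}, so every requirement of~\ref{item:2} is met. For \ref{item:2}$\Rightarrow$\ref{item:1} I would first note that each elementary factor in~\eqref{eq:H_inf_fact2} is totally nonnegative: the matrices $J(b_0,\beta_0)$ and $J(1,\beta_j)$ from~\eqref{eq:J} have nonnegative entries arranged in a staircase that makes every minor nonnegative, $H(1,1)$ is totally nonnegative, and $T(g)$ is totally nonnegative by assumption. Each finite partial product is therefore totally nonnegative by the Cauchy--Binet formula; convergence in $\|\cdot\|_\rho$ implies entry-wise convergence, so every fixed finite minor of $H(p,q)$ is a limit of nonnegative minors and is thus nonnegative.

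The step that looks most dangerous in advance is recovering the analytic shape of the common factor $g$ from total nonnegativity in \ref{item:1}$\Rightarrow$\ref{item:3}: phrased as ``$T(g)$ is totally nonnegative'' it tempts one to divide the identity $H(p,q)=H(\tilde p,\tilde q)\,T(g)$ of~\eqref{eq:Hpjqj_dec} by the non-square factor $H(\tilde p,\tilde q)$, which is awkward to make rigorous. The submatrix observation above sidesteps this completely, reducing the matter to Theorem~\ref{th:AESW} applied to $p$ and $q$ separately. Consequently the only genuinely technical points that remain are the convergence bookkeeping---that the $\|\cdot\|_\rho$-limit in~\eqref{eq:H_inf_fact2} exists and that it preserves total nonnegativity, together with the summability of $(\beta_j)$---all of which are already delivered by Lemma~\ref{lemma:dec_general}, Lemma~\ref{lemma:decomp_decrease} and Corollary~\ref{cr:norm_H}.
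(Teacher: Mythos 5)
Your cycle \ref{item:1}$\Rightarrow$\ref{item:3}$\Rightarrow$\ref{item:2}$\Rightarrow$\ref{item:1} is the same cycle the paper runs (in the order \ref{item:3}$\Rightarrow$\ref{item:2}$\Rightarrow$\ref{item:1}$\Rightarrow$\ref{item:3}), and the implications \ref{item:3}$\Rightarrow$\ref{item:2} (Lemma~\ref{lemma:dec_general} plus Theorem~\ref{th:AESW}) and \ref{item:2}$\Rightarrow$\ref{item:1} (Cauchy--Binet on the finite partial products, then passage to the entry-wise limit) are handled exactly as in the paper and are fine. The problem is in \ref{item:1}$\Rightarrow$\ref{item:3}, where you declare victory too early: after obtaining $F\in\mathcal S$ from Corollary~\ref{cr:TNN-decomp-hurw-matr} and the forms of $p$ and $q$ from Theorem~\ref{th:AESW}, you set $\tilde p$ to be the genus-$0$ product over the poles of $F$, $\tilde q\coloneqq F\tilde p$, $g\coloneqq p/\tilde p$, and assert that ``$\tilde p,\tilde q$ are coprime genus-$0$ entire functions.'' That $\tilde q$ is entire and coprime to $\tilde p$ is indeed automatic, but \emph{genus~$0$ of $\tilde q$ is not}. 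By Theorem~\ref{th:AESW} you only know $p(z)=e^{\gamma_2 z}(\cdots)$ and $q(z)=e^{\gamma_1 z}(\cdots)$ with \emph{a priori unrelated} $\gamma_1,\gamma_2\ge 0$. Your $g=p/\tilde p$ carries the exponential $e^{\gamma_2 z}$, so $\tilde q=q/g$ carries the factor $e^{(\gamma_1-\gamma_2)z}$, and it is of genus~$0$ if and only if $\gamma_1=\gamma_2$. There is no way to reallocate the exponentials: $p/g$ of genus~$0$ forces the exponential part of $g$ to be $e^{\gamma_2 z}$, while $q/g$ of genus~$0$ forces it to be $e^{\gamma_1 z}$, so the equality $\gamma_1=\gamma_2$ is genuinely part of the assertion \ref{item:3} and must be proved.

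This is precisely the nontrivial second half of the paper's proof of \ref{item:1}$\Rightarrow$\ref{item:3}, which your proposal omits entirely. The paper writes $F(z)=e^{\gamma z}\,\tilde q(z)/\tilde p(z)$ with $\gamma=\gamma_1-\gamma_2$ and shows $\gamma=0$ as follows: using the interlacing~\eqref{eq:order1} of the (negative) zeros $-\tau_\nu$ and poles $-\sigma_\nu$ of $F$, it establishes by a telescoping argument-function estimate (adapted from Chebotarev--Me\u\i man) that $G(z)\coloneqq\tfrac1{b_0}e^{-\gamma z}F(z)$ satisfies $0<\Arg G(z)<\pi$ for $\Im z>0$, hence $G\in\mathcal S$; then, if $\gamma\ne 0$, evaluating at $z=\pi i/\gamma$ gives $\Arg F(\pi i/\gamma)=\Arg\bigl(-G(\pi i/\gamma)\bigr)$, contradicting $F\in\mathcal S$. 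Nothing in your ingredient list (Theorem~\ref{th:AESW}, Corollary~\ref{cr:TNN-decomp-hurw-matr}, Lemma~\ref{lemma:cheb1}) substitutes for this step, and the estimate itself is delicate in the case of infinitely many zeros and poles, where the interlacing is needed to make the sum of argument differences telescope and stay below $\pi$. Incidentally, the worry you flag about ``dividing by the non-square factor $H(\tilde p,\tilde q)$'' is a red herring --- the paper never inverts~\eqref{eq:Hpjqj_dec} either; the true danger point of this implication is exactly the exponential-matching argument you skipped.
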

\addtocounter{theorem}{-1} }
\begin{proof}
    We are proving as
    follows:~\ref{item:3}$\implies$\ref{item:2}$\implies$\ref{item:1}$\implies$\ref{item:3}.
    
    The implication~\ref{item:3}$\implies$\ref{item:2} is a consequence of
    Lemma~\ref{lemma:dec_general} since total positivity of the matrix $T(g)$ is provided by
    Theorem~\ref{th:AESW}.

    The factors in~\eqref{eq:H_inf_fact2} are totally nonnegative and have at most a finite
    number of nonzero entries in each column. Therefore, the Cauchy-Binet formula is valid and
    the products of the form
    \[
    J(b_0,\beta_0)\,J(1,\beta_1) \cdots J(1,\beta_{\omega})\,H(1,1)\,T(g),
    \text{ where }0\le\omega<\infty,
    \]
    are totally nonnegative. Moreover, the minors depend continuously on the matrix entries, so
    the entry-wise limit of totally nonnegative matrices is totally nonnegative itself. As a
    consequence, the matrix $H(p,q)$ is totally nonnegative if the condition~\ref{item:2} holds.
    That is the implication~\ref{item:2}$\implies$\ref{item:1} is true.

    Let us show that~\ref{item:1}$\implies$\ref{item:3}. The matrices~$T(p)$ and~$T(q)$ are
    submatrices of $H(p,q)$. Thus, if~\ref{item:1} is true, by Theorem~\ref{th:AESW} the series
    $p(z)$ and $q(z)$ converge to meromorphic functions of the form~\eqref{eq:tnns_generator},
    so $F(z)$ is a meromorphic function as well.
    
    Corollary~\ref{cr:TNN-decomp-hurw-matr} yields $F\in\mathcal{S}$. So according to
    Lemma~\ref{lemma:cheb1}, the zeros (we denote their number by~$\omega_1\le\infty$) and poles (we
    denote their number by~$\omega_2\le\infty$) of $F(z)$ are real, simple and interlacing. Moreover,
    $F(x)>F(0)\ge 0$ for $x>0$, hence $F(z)$ has only the zeros $-\tau_1$, $-\tau_2$, \dots,
    $-\tau_{\omega_1}$ and poles $-\sigma_1$, $-\sigma_2$, \dots, $-\sigma_{\omega_2}$,
    satisfying the following condition ({\em cf.}~\eqref{eq:pq_zeros})
    \begin{equation}\label{eq:order1}
        0\le\tau_1<\sigma_1<\tau_2<\sigma_2<\tau_3<\dots.
    \end{equation}
    Note that~\eqref{eq:order1} implies the inequality $\omega_1-1\le\omega_2\le\omega_1$.

    However, the functions~$p(z)$ and $q(z)$ have the form~\eqref{eq:tnns_generator}, in particular
    they have no nonpositive poles. Therefore, all the numbers $-\tau_1$, $-\tau_2$, \dots,
    $-\tau_{\omega_1}$ are among the zeros of $q(z)$, while $-\sigma_1$, $-\sigma_2$, \dots,
    $-\sigma_{\omega_2}$ are among the zeros of $p(z)$. As a result,
    \[
    q(z)=e^{\gamma_1 z}\tilde{q}(z)\frac{\prod_{\nu} \left(1+\frac {z}{\alpha_\nu}\right)}
    {\prod_{\mu} \left(1-\frac {z}{\beta_\mu}\right)}
    \quad\text{and}\quad
    p(z)=e^{\gamma_2 z}\tilde{p}(z)\frac{\prod_{\nu} \left(1+\frac {z}{\alpha_\nu}\right)}
    {\prod_{\mu} \left(1-\frac {z}{\beta_\mu}\right)},
    \]
    where $\gamma_1$, $\gamma_2$, $(\alpha_\nu)_\nu$ and $(\beta_\mu)_\mu$ are appropriately
    chosen positive numbers,
    \[
    \tilde{q}(z)\coloneqq
    b_0z^j\prod_{\nu=j+1}^{\omega_1}\left(1+\frac{z}{\tau_\nu}\right)\text{ for } j=1-\sign b_0
    \text{ and }
    \tilde{p}(z)\coloneqq\prod_{\mu=1}^{\omega_2}\left(1+\frac{z}{\sigma_\mu}\right).
    \]
    After cancellations in the fraction $\dfrac{q(z)}{p(z)}$ we obtain
    $F(z)=e^{\gamma z}\dfrac{\tilde{q}(z)}{\tilde{p}(z)}$, where
    $\relpenalty=10000 \binoppenalty=10000 \gamma\coloneqq\gamma_1-\gamma_2$.

    Let us show $\gamma=0$. Set
    \[G(z)\coloneqq\tfrac{1}{b_0}e^{-\gamma z}F(z).\]
    For $\omega_1,\omega_2<\infty$ we can express the rational function~$G(z)$ as a sum of
    partial fractions and ascertain that it agrees with the expansion~\eqref{F_sum_form_R}. So
    in this case~$G\in\mathcal{S}$.

    Suppose that $\omega_1$ and $\omega_2$ are infinite. With
    $\Arg:\mathbb{C}\setminus\{0\}\to(-\pi,\pi]$ denoting the principal argument, we obtain the
    following for $\Im z>0$ from~\eqref{eq:order1}
    \[\pi
    > \Arg \left(\tau_0+z\right)
    > \Arg \left(\sigma_0+z\right)
    > \Arg \left(\tau_1+z\right)
    > \Arg \left(\sigma_1+z\right)
    > \dots> 0.\]    
    Whenever $\nu$ or $\tau_0$ is positive, we obtain
    \begin{equation}\label{eq:aux1}
        0 < \Arg\left(1+\frac{z}{\tau_\nu}\right) - \Arg\left(1+\frac{z}{\sigma_{\nu}}\right)
        < \Arg\left(1+\frac{z}{\tau_{\nu}}\right) - \Arg\left(1+\frac{z}{\tau_{\nu+1}}\right)<\pi.
    \end{equation}
    Therefore, if $\tau_0>0$
    \begin{multline}\label{eq:aux2}
        0<\Arg\prod_{\nu=0}^\infty \frac{1+\frac{z}{\tau_\nu}}{1+\frac{z}{\sigma_\nu}}\le
        \sum_{\nu=0}^\infty \Arg \frac{1+\frac{z}{\tau_\nu}}{1+\frac{z}{\sigma_\nu}} =\\
        \sum_{\nu=0}^\infty \left(\Arg\left({1+\frac{z}{\tau_\nu}}\right)
            -\Arg\left({1+\frac{z}{\sigma_\nu}}\right)\right) <\\
        \Arg\left(1+\frac{z}{\tau_0}\right) -
        \lim_{\nu\to\infty}\Arg\left(1+\frac{z}{\tau_{\nu}}\right) <\pi.
    \end{multline}
    That is
    \begin{equation}\label{eq:aux3}
        0<\Arg G(z)<\pi\quad\text{when}\quad \Im z>0,
    \end{equation}
    \emph{i.e.} $G\in\mathcal{S}$ since $G(z)$ is real. If $\tau_0=0$ we just replace all
    instances of~$\left(1+\frac{z}{\tau_{0}}\right)$ in inequalities~\eqref{eq:aux1}
    and~\eqref{eq:aux2} with $z$ and obtain the same. This method to deduce the
    estimate~\eqref{eq:aux3} is taken from~\cite[Ch.IV~\S 10, Lemma~11]{ChebMei}.

    Now if $\gamma\ne 0$, then $\Arg F({\pi}i/{\gamma}) = \Arg(-G({\pi}i/{\gamma}))$ and
    $G\in\mathcal{S}$, which contradicts the inclusion~$F\in\mathcal{S}$. Thus $\gamma=0$.
\end{proof}

\section{Distribution of zeros and poles}\label{sec:appl-entire-funct}
First, let us prove the following auxiliary fact.
\begin{lemma}\label{lemma:H_tnn-p_zero}
    Let $p(z)$, $q(z)$ be the formal power series $p(z)=\sum_{k=0}^\infty\,a_kz^k$ and
    $q(z)=\sum_{k=0}^\infty\,b_kz^k$ such that $a_0=0$ and $b_0>0$. The matrix $H(p,q)$ defined
    by~\eqref{eq:Hpq_def2} is totally nonnegative if and only if $p(z)\equiv 0$ and $q(z)$
    converges to a function of the form
    \begin{equation}\label{eq:tnns_generator2}
        f(z)=f_0\,e^{\gamma z}\frac{\prod_{\nu} \left(1+\frac {z}{\alpha_\nu}\right)}
        {\prod_{\mu} \left(1-\frac {z}{\beta_\mu}\right)},
    \end{equation}
    where
    $\gamma\ge 0$, $\alpha_\nu,\beta_\mu>0$ for all $\mu,\nu$ and
    $\sum_{\nu}\frac{1}{\alpha_\nu}+\sum_{\mu}\frac{1}{\beta_\mu} <\infty$.
\end{lemma}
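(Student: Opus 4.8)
The plan is to reduce everything to the Aissen--Edrei--Schoenberg--Whitney criterion (Theorem~\ref{th:AESW}) via a short minor computation, treating the two implications separately; the only genuine content is the forward direction, where I must show that total nonnegativity together with $a_0=0$, $b_0>0$ forces $p\equiv 0$.

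For the ``if'' direction, suppose $p(z)\equiv 0$ and $q(z)$ has the form~\eqref{eq:tnns_generator2}. Then every $a$-row of the matrix~\eqref{eq:Hpq_def2} vanishes, so the even-indexed rows of $H(0,q)$ are identically zero, while its odd-indexed rows $1,3,5,\dots$, taken against all columns, reproduce precisely the Toeplitz matrix $T(q)$ of the form~\eqref{eq:T_form}. Consequently every minor of $H(0,q)$ either involves a zero row---and then vanishes---or uses only odd rows, in which case it equals a minor of $T(q)$. Since $q$ has the form~\eqref{eq:tnns_generator2} with $f_0=b_0>0$, Theorem~\ref{th:AESW} asserts that $T(q)$ is totally nonnegative, and hence so is $H(0,q)$.

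For the ``only if'' direction, assume $H(p,q)$ is totally nonnegative. The second and third rows of~\eqref{eq:Hpq_def2} read $(0,a_0,a_1,a_2,\dots)$ and $(0,b_0,b_1,b_2,\dots)$, so their entries in column $c\ge 2$ are $a_{c-2}$ and $b_{c-2}$ respectively. The key step is to sandwich each coefficient $a_k$ between two $2\times 2$ minors: for every $j\ge 3$,
\[
H\pmat 1&2\\1&j\epmat = \begin{vmatrix} b_0 & b_{j-1}\\ 0 & a_{j-2}\end{vmatrix} = b_0\,a_{j-2}
\quad\text{and}\quad
H\pmat 2&3\\2&j\epmat = \begin{vmatrix} a_0 & a_{j-2}\\ b_0 & b_{j-2}\end{vmatrix} = -b_0\,a_{j-2},
\]
where the second evaluation uses $a_0=0$. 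Total nonnegativity forces both quantities to be $\ge 0$, and since $b_0>0$ this yields $a_{j-2}=0$ for all $j\ge 3$, that is $a_1=a_2=\dots=0$. Together with the hypothesis $a_0=0$ we conclude $p(z)\equiv 0$.

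Once $p\equiv 0$ is in hand, I would finish exactly as in the ``if'' direction: the total nonnegativity of $H(0,q)$ restricts to that of its odd-row submatrix $T(q)$, so Theorem~\ref{th:AESW} guarantees that $q$ converges to a function of the form~\eqref{eq:tnns_generator2}. I do not expect a real obstacle in this argument; the one point that requires care is spotting the two minor families above and keeping track of which coefficient sits in each column, after which the statement follows immediately from Theorem~\ref{th:AESW}.
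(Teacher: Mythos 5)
Your proof is correct and takes essentially the same approach as the paper: both force $p\equiv 0$ from the minor $H\pmat 2&3\\2&i+1\epmat=-b_0a_i\ge 0$ together with $a_i\ge 0$, and then apply Theorem~\ref{th:AESW} to the Toeplitz submatrix $T(q)$ in both directions. The only cosmetic difference is that you derive $a_{j-2}\ge 0$ from the $2\times 2$ minor on rows $1,2$, whereas the paper reads it off directly from the entries of $H(p,q)$ as $1\times 1$ minors.
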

\begin{proof}
    If $a_0=0$ then the total nonnegativity of~$H=H(p,q)$ implies
    \[
    0\le a_{i}
    = -\frac{1}{b_0}\begin{vmatrix}a_0&a_i\\b_0&b_i\end{vmatrix}
    = -\frac{1}{b_0} H \pmat 2&3\\2&i+1\epmat \le 0\quad
    \forall i=1,2,3,\dots,
    \]
    so $p(z)\equiv 0$. The Toeplitz matrix~$T(q)$ defined by~\eqref{eq:T_form} is totally
    nonnegative as a submatrix of~$H(p,q)$. Therefore, by Theorem~\ref{th:AESW}, $q(z)$ is of
    the form~\eqref{eq:tnns_generator2}.

    Conversely, if~$p(z)\equiv 0$ then any nonzero minor of $H(p,q)$ is equal to a minor of
    $T(q)$. According to Theorem~\ref{th:AESW} the matrix~$T(q)$ is totally nonnegative, hence
    $H(p,q)$ is totally nonnegative as well.
\end{proof}
{
    \renewcommand{\thetheorem}{\ref{th:negroots_p0dec}}
\begin{theorem}
    A power series~$f(z)={\sum_{k=0}^\infty}\, f_kz^k$ with $f_0> 0$ converges to an entire function of
    the form
    \begin{equation}\label{eq:ent_tnns_generator2}
        f(z)=f_0\,e^{\gamma z}\prod_{1\le\nu\le\omega} \left(1+\frac {z}{\alpha_\nu}\right),
    \end{equation} 
    where $\gamma\ge 0$, $\alpha_\nu>0$ for all $\nu$ and
    $\sum_{1\le\nu\le\omega}\frac{1}{\alpha_\nu} <\infty$ for some $\omega$,
    $0\le\omega\le\infty$,
    if and only if the infinite matrix
    \[
    \mathcal D_f={\begin{pmatrix}
        f_0 & f_1 & f_2 & f_3 & f_4 & \hdots\\
        0 & f_1 & 2f_2 & 3f_3 & 4f_4 & \hdots\\
        0 & f_0 & f_1 & f_2 & f_3 & \hdots\\
        0 &     0 & f_1 & 2f_2& 3f_3 & \hdots\\
        0 &     0 & f_0 & f_1 & f_2 & \hdots\\
        \vdots & \vdots & \vdots & \vdots & \vdots & \ddots
    \end{pmatrix}}
    \]
    is totally nonnegative.
\end{theorem}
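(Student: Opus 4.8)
The plan is to recognise $\mathcal{D}_f$ as a Hurwitz-type matrix and then invoke Theorem~\ref{th:main1}. Reading off the rows, the odd rows of $\mathcal{D}_f$ carry the coefficients $f_0,f_1,f_2,\dots$ of $f$, while the even rows carry $f_1,2f_2,3f_3,\dots$, which are exactly the Taylor coefficients of $f'(z)=\sum_{k\ge 0}(k+1)f_{k+1}z^k$. Hence $\mathcal{D}_f=H(f',f)$ in the notation of~\eqref{eq:Hpq_def}, with $p=f'$, $q=f$, so that $F=\frac qp=\frac{f}{f'}$. Since the $2\times 2$ minor on rows $1,2$ and columns $1,2$ equals $f_0f_1$, total nonnegativity forces $f_1\ge 0$. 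If $f_1=0$ the matrix $H(f',f)$ has $p(0)=f'(0)=0$ and $q(0)=f_0>0$, so Lemma~\ref{lemma:H_tnn-p_zero} applies and gives that $\mathcal{D}_f$ is totally nonnegative iff $f'\equiv 0$, i.e.\ $f\equiv f_0$, which is the form~\eqref{eq:ent_tnns_generator2} with $\gamma=0$ and empty product. From now on assume $f_1>0$. Dividing the even rows of $H(f',f)$ by $f_1>0$ multiplies every minor by a positive power of $f_1^{-1}$ and hence preserves total nonnegativity; thus $\mathcal{D}_f$ is totally nonnegative iff $H(f'/f_1,f)$ is, and the latter meets the normalization $p(0)=1$ required by Theorem~\ref{th:main1}, its associated ratio being $F=f_1\,\frac{f}{f'}$.

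For the forward implication I would start from $f$ of the form~\eqref{eq:ent_tnns_generator2} and compute the logarithmic derivative $\frac{f'}{f}=\gamma+\sum_\nu\frac{1}{z+\alpha_\nu}$. Each summand $\frac{1}{z+\alpha_\nu}$ lies in $\mathcal{R}^{-1}$, whence $\frac{f'}{f}\in\mathcal{R}^{-1}$ and $\frac{f}{f'}=\bigl(\frac{f'}{f}\bigr)^{-1}\in\mathcal{R}$; since $f$ and $f'$ are positive on $[0,\infty)$ we in fact get $\frac{f}{f'}\in\mathcal{S}$, and so $F=f_1\frac{f}{f'}\in\mathcal{S}$. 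Both $q=f$ and $p=f'/f_1$ are entire, each equal to $e^{\gamma z}$ times a genus-$0$ canonical product with negative zeros (for $f'$ this uses that its zeros are real, by $\frac{f'}{f}\in\mathcal{R}^{-1}$, and negative, by $f'>0$ on $[0,\infty)$). Collecting $e^{\gamma z}$ together with the zeros common to $f$ and $f'$ (these come only from multiple zeros of $f$, hence are negative) into a factor $g$, we obtain $g$ of the form~\eqref{eq:tnns_generator} with $g(0)=1$ and no poles, while $\tilde q=q/g$ and $\tilde p=p/g$ are coprime entire of genus~$0$. This is precisely condition~\ref{item:3} of Theorem~\ref{th:main1} for $H(f'/f_1,f)$, and \ref{item:3}~$\Rightarrow$~\ref{item:1} yields total nonnegativity of $H(f'/f_1,f)$, hence of $\mathcal{D}_f$.

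For the converse I would feed total nonnegativity of $H(f'/f_1,f)$ into \ref{item:1}~$\Rightarrow$~\ref{item:3} of Theorem~\ref{th:main1}. This produces a meromorphic $\mathcal{S}$-function $F=f_1\frac{f}{f'}=\frac{\tilde q}{\tilde p}$, where $\tilde q=f/g$ and $\tilde p=(f'/f_1)/g$ are coprime entire of genus~$0$ and $g$ has the form~\eqref{eq:tnns_generator}, $g(0)=1$. The crucial step is to rule out positive poles of $g$: if $g$ had a pole at some $\beta>0$, then the entireness of $f=g\tilde q$ and of $f'=f_1\,g\tilde p$ would force both $\tilde q$ and $\tilde p$ to vanish at $\beta$, contradicting their coprimality. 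Hence $g=e^{\gamma z}\prod_\nu(1+z/\alpha'_\nu)$ with $\gamma\ge 0$ and $\sum 1/\alpha'_\nu<\infty$. Finally, $\tilde q=f/g$ is genus~$0$ and its zeros are exactly the zeros of $F\in\mathcal{S}$, which are nonpositive; since $\tilde q(0)=f_0\ne 0$ they are strictly negative, so $\tilde q=f_0\prod_\nu(1+z/\tau_\nu)$ with $\tau_\nu>0$, $\sum 1/\tau_\nu<\infty$. Multiplying, $f=g\tilde q=f_0\,e^{\gamma z}\prod_\nu(1+z/\alpha'_\nu)\prod_\nu(1+z/\tau_\nu)$, which after merging the two products is exactly the form~\eqref{eq:ent_tnns_generator2} with $\sum 1/\alpha_\nu<\infty$.

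The main obstacle is the pole-cancellation argument in the converse: a priori Theorem~\ref{th:main1} only supplies a common factor $g$ of the general meromorphic type~\eqref{eq:tnns_generator}, possibly carrying positive poles, and it is the coprimality of $\tilde q,\tilde p$ together with the entireness of \emph{both} $f$ and $f'$ that forces $g$ to be entire. The secondary bookkeeping point, easy to overlook, is that the exponential factor $e^{\gamma z}$---which makes $f$ of genus~$1$ when $\gamma>0$---must be assigned to $g$ rather than to the genus-$0$ quotients; this is exactly what the ``up to a common factor $g$'' clause of condition~\ref{item:3} accommodates.
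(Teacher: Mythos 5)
Your overall route is the same as the paper's: you identify $\mathcal D_f=H(f',f)$, reduce the case $f_1=0$ to Lemma~\ref{lemma:H_tnn-p_zero}, normalize so that Theorem~\ref{th:main1} applies (your row-scaling is equivalent to the paper's factorization $\mathcal D_f=f_1\,H\bigl(\tfrac{f'}{f_1},\tfrac{f}{f_1}\bigr)$), and your forward direction --- logarithmic derivative in $\mathcal{R}^{-1}$, hence $\tfrac{f}{f'}\in\mathcal{S}$, then condition~\ref{item:3} of Theorem~\ref{th:main1} --- is essentially the paper's argument verbatim.

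There is, however, a genuine gap in your converse, precisely at the step you flag as the ``crucial'' one. You exclude positive poles of $g$ by invoking ``the entireness of $f=g\tilde q$ and of $f'=f_1\,g\tilde p$''. But in the direction ``$\mathcal D_f$ totally nonnegative $\Rightarrow$ $f$ is entire of the form~\eqref{eq:ent_tnns_generator2}'', the entireness of $f$ is exactly the conclusion to be proved, not a hypothesis: condition~\ref{item:3} of Theorem~\ref{th:main1} only delivers $f=g\tilde q$ with $g$ \emph{meromorphic} of the form~\eqref{eq:tnns_generator}, so a priori $f$ may have positive poles (total nonnegativity of the submatrix $T(f)$ alone is compatible with, say, $f(z)=\frac{1}{1-z}$, so entirety really carries content here). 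As written, your pole-cancellation argument is circular. The paper avoids this by using what \ref{item:3} \emph{does} supply: $h\coloneqq\frac{f'}{g}$ is entire, and $h=\tilde f'+\tilde f\,\frac{g'}{g}$, where $\tilde f=\frac{f}{g}$ has only negative zeros; hence every (positive) pole of $g$ would be a pole of $h$, a contradiction, so $g$ is entire. Your argument can also be repaired with an ingredient you already use later: since $\tilde q$ has only negative zeros, a pole of $g$ at $\beta>0$ of order $m$ (note $m$ may exceed $1$, as the $\beta_\mu$ in~\eqref{eq:tnns_generator} can repeat) makes $f=g\tilde q$ have a pole of exact order $m$ at $\beta$; then the meromorphic continuation of $f'$, being the derivative of that of $f$ by the identity theorem, has a pole of order $m+1$ there, while the representation $f'=f_1\,g\,\tilde p$ caps its pole order at $m$ --- contradiction. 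Either fix is short, but without one the step begs the question.
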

\addtocounter{theorem}{-1} }

\begin{proof}
    Let the matrix~$\mathcal D_f=H(f',f)$ be totally nonnegative, where~$f'(z)$ denotes the
    formal derivative of $f(z)$. If $f_1=0$, by Lemma~\ref{lemma:H_tnn-p_zero}
    $f(z)\equiv f_0>0$, \emph{i.e.} \eqref{eq:ent_tnns_generator2} is satisfied.

    Suppose that~$f_1\ne 0$. Theorem~\ref{th:main1} implies that $f(z)$ and $f'(z)$ converge in
    a neighbourhood of the origin. Moreover, for some meromorphic function~$g(z)$ of the
    form~\eqref{eq:tnns_generator2}, the functions $\tilde{f}(z)\coloneqq\frac{f(z)}{g(z)}$ and
    $h\coloneqq\frac{f'(z)}{g(z)}$ are entire of genus~$0$, coprime and have only negative
    zeros. In particular, the poles of~$g(z)$ are positive (if any). Let us show~$g(z)$ has no
    poles. Observe that in the right-hand side of the expression
    \[
    h(z)=\frac{\tilde{f}(z)g'(z)+g(z)\tilde{f}'(z)}{g(z)}
        = \tilde{f}'(z) + \tilde{f}(z)\frac{g'(z)}{g(z)}
    \]
    the logarithmic derivative of $g(z)$ is multiplied by a function with no positive
    zeros. Therefore, each pole of $g(z)$ must be a pole of $h(z)$. But $h(z)$ is an entire
    function, thus~$g(z)$ is entire and $f(z)=\tilde{f}(z)g(z)$ can be represented as
    in~\eqref{eq:ent_tnns_generator2}.
    
    Conversely, let~$f(z)$ admit the representation~\eqref{eq:ent_tnns_generator2}. If $f(z)$ is
    a constant then by Lemma~\ref{lemma:H_tnn-p_zero} the matrix~$\mathcal D_f$ is totally
    nonnegative. Suppose now that $f(z)$ is not a constant and consider its logarithmic
    derivative
    \begin{equation}\label{eq:log_deriv}
        F(z)=\frac{f'(z)}{f(z)}=\gamma+\sum_{1\le\nu\le\omega}\frac 1{z+\alpha_\nu},\qquad
        0\le\omega\le\infty.
    \end{equation}
    Each summand in the right-hand side of~\eqref{eq:log_deriv} is in~$\mathcal{R}^{-1}$,
    so~$F\in\mathcal R^{-1}$. The function~$f(z)$ is non-constant, hence $F(z)\not\equiv 0$.
    Therefore, the function $\frac 1{F(z)}$ is an $\mathcal{R}$-function. Moreover, $F(z)$ (and
    hence~$\frac 1{F(z)}$) has only negative poles and zeros. Consequently, $\frac 1{F(z)}$ is
    an $\mathcal S$-function.
    
    Since $f(z)$ has the form~\eqref{eq:ent_tnns_generator2}, each common zero of $f(z)$ and
    $f'(z)$ is negative. In addition the functions $e^{-\gamma z}f(z)$ and $e^{-\gamma z}f'(z)$
    are of genus~$0$. So the function $\frac{f(z)}{f'(z)}=\frac 1{F(z)}$ satisfies~\ref{item:3}
    in Theorem~\ref{th:main1}. Consequently, the matrix
    \[\mathcal D_f=H(f',f)=f_1\,H\left(\tfrac{f'}{f_1},\tfrac{f}{f_1}\right)\]
    is totally nonnegative.
\end{proof}

Let $f(z)=\sum_{k=0}^\infty f_kz^k$, $f_0>0$, be a real entire function. Define its infinite
Hurwitz matrix
    \begin{equation}\label{mcH_form}
        \mathcal H_f={\begin{pmatrix}
                f_0 & f_2 & f_4 & f_6 & f_8 & \hdots\\
                0 & f_1 & f_3 & f_5 & f_7 & \hdots\\
                0 & f_0 & f_2 & f_4 & f_6 & \hdots\\
                0 &   0 & f_1 & f_3 & f_5 & \hdots\\
                0 &   0 & f_0 & f_2 & f_4 & \hdots\\
                \vdots & \vdots & \vdots & \vdots & \vdots & \ddots
            \end{pmatrix}}.
    \end{equation}
    For the minors of $\mathcal H_f$ we use the same notation as in
    Section~\ref{sec:basic_facts}, such that
\[
\mathcal H_f^{(k)} = \mathcal H_f{\pmat 2&3&\dots& k\\2&3&\dots& k \epmat}.
\]

Grommer in~\cite[\S 16, Satz~IV]{Grommer} extended the Hurwitz criterion~\cite{Hurwitz} to
entire functions. However, he overlooked the condition on common zeros of odd and even parts
(which was addressed by Kre\u\i n in~\cite{AkhKrein}). We only need the following particular
case of this extension.
\begin{theorem}[{\cite[Theorem~12]{AkhKrein}, \cite[Ch.V~\S 4]{ChebMei}}]\label{th:stable_p01dec}
    Let a real entire function~$f(z)$, $f(0)>0$, be of genus~$1$ or~$0$. Suppose that its even part
    $\big(f(z)+f(-z)\big)/2$ and its odd part $\big(f(z)-f(-z)\big)/2$ have no common zeros.

    Then the function $f$ can be represented as
    \begin{equation}\label{Hrw_func}
        f(z) = C e^{\gamma z}\prod_{1\le\mu\le\omega_1} \left(1+\frac{z}{x_\mu}\right)
        \prod_{1\le\nu\le\omega_2} \left(1+\frac{z}{\alpha_\nu}\right)
        \left(1+\frac{z}{\overline \alpha_\nu}\right),
    \end{equation}
    where $0\le\omega_1,\omega_2\le\infty$, $\gamma\ge 0$ and $C>0$, and its zeros satisfy the
    conditions
    \begin{gather}
        x_\mu > 0\text{ for }1\le\mu\le\omega_1,\quad
        \Re\alpha_\nu > 0,\ \Im\alpha_\nu > 0\text{ for }1\le\nu\le\omega_2,
        \nonumber\\
        \sum_{1\le\mu\le\omega_1} \frac 1{x_\mu}<\infty\quad\text{and}
        \quad\sum_{1\le\nu\le\omega_2}\Re\frac{1}{\alpha_\nu}<\infty,\nonumber
    \end{gather}
    if and only if
    \[
    \begin{gathered}
        \mathcal H_f^{(2)}(f), \mathcal H_f^{(3)}(f), \dots, \mathcal H_f^{(\omega_1+2\omega_2+1)}(f)>0,\\
        \mathcal H_f^{(\omega_1+2\omega_2+2)}(f)=\mathcal H_f^{(\omega_1+2\omega_2+3)}(f)=\dots=0.
    \end{gathered}
    \]
\end{theorem}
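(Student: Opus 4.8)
The plan is to recognise $\mathcal H_f$ as a Hurwitz-type matrix and feed it into the minor criterion of Section~\ref{sec:basic_facts}. Splitting $f(z)=\sum_k f_kz^k$ into even and odd parts, set $q(w):=\sum_k f_{2k}w^k$ and $p(w):=\sum_k f_{2k+1}w^k$, so that $f(z)=q(z^2)+z\,p(z^2)$ and $\mathcal H_f=H(p,q)$ with $b_k=f_{2k}$, $a_k=f_{2k+1}$. Since $q(0)=f_0>0$, the origin is not a common zero of the even and odd parts, and one checks that these parts are coprime exactly when $q$ and $p$ have no common zero in $w$; moreover $f$ of genus~$\le 1$ is entire of finite order, so $q$ and $p$ are entire as well. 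In the degenerate case $f_1=0$ the odd part $z\,p(z^2)$ vanishes identically, whence coprimality forces the even part to be zero-free and the genus~$\le 1$ hypothesis then forces $f\equiv f_0$, which is~\eqref{Hrw_func} with empty products and $\omega_1=\omega_2=0$; so I assume $f_1\ne 0$ and normalise by dividing $p,q$ by $f_1>0$, which only rescales each minor $\mathcal H_f^{(k)}$ by a positive factor.

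Next I would reduce the minor condition to a function-theoretic statement. By Theorem~\ref{th:cheb_mei_pos}, the requirement that $\mathcal H_f^{(k)}>0$ for $k=2,\dots,N$ and $\mathcal H_f^{(k)}=0$ for $k>N$ holds for some $N$, $2\le N\le\infty$, if and only if $\Phi:=q/p$ is a meromorphic $\mathcal S$-function; here $N$ is finite exactly when $\Phi$ is rational, in which case $\Phi$ has $\lfloor(N-1)/2\rfloor$ poles counting a possible pole at infinity. The theorem thereby reduces to the equivalence between $\Phi\in\mathcal S$ and $f$ having the form~\eqref{Hrw_func}, together with the identity $N=\omega_1+2\omega_2+1$. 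I would verify the latter by a degree count in the rational case: if $f$ is a (quasi-)polynomial with $n=\omega_1+2\omega_2$ zeros, then $\deg q-\deg p\in\{0,1\}$ and $\Phi$ has precisely $\lfloor n/2\rfloor=\lfloor(N-1)/2\rfloor$ poles; the transcendental case corresponds to $N=\infty$.

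The core is the Hermite--Biehler-type correspondence. For the direction $\Phi\in\mathcal S\Rightarrow$~\eqref{Hrw_func}, note that a zero $z$ of $f$ with $p(z^2)\ne 0$ satisfies $\Phi(z^2)=-z$. In the open first quadrant $z^2$ lies in the upper half-plane, so $\Im\Phi(z^2)\ge 0$ because $\Phi\in\mathcal R$, whereas $\Im(-z)<0$; hence $\Phi(z^2)\ne-z$. By conjugate symmetry the open fourth quadrant is also free of zeros, on $(0,\infty)$ one has $\Phi(z^2)>0\ne-z$, and on the imaginary axis $\Phi(z^2)$ is real while $-z$ is purely imaginary and nonzero --- the exceptional points with $p(z^2)=0$ giving $f(z)=q(z^2)\ne 0$ by coprimality. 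Thus every zero of $f$ lies in the open left half-plane; being real, $f$ has only negative real zeros $-x_\mu$ and conjugate pairs $-\alpha_\nu,-\overline\alpha_\nu$ with $\Re\alpha_\nu>0$, $\Im\alpha_\nu>0$, and the genus~$\le 1$ Hadamard factorisation yields~\eqref{Hrw_func}. The summability conditions $\sum 1/x_\mu<\infty$ and $\sum\Re(1/\alpha_\nu)<\infty$, which are stronger than genus~$\le 1$ alone provides, I would read off from the Chebotarev representation~\eqref{F_sum_form_R} of $\Phi$ (Theorem~\ref{th:chebotarev}): its summability $\sum|A_\nu|/\sigma_\nu^2<\infty$ is exactly what lets the Weierstra\ss\ primary factors of $f$ be absorbed into the single exponential $e^{\gamma z}$.

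The converse --- that $f$ of the form~\eqref{Hrw_func} yields $\Phi\in\mathcal S$ --- is where I expect the main difficulty. A natural handle is the reflected ratio $R(z):=f(-z)/f(z)$: factor by factor one gets $|R(z)|<1$ for $\Re z>0$ (a half-plane Blaschke estimate using $x_\mu>0$, $\Re\alpha_\nu>0$ and $\gamma\ge 0$), and since $\Phi(z^2)=z\,\frac{1+R}{1-R}$ the Cayley transform $\frac{1+R}{1-R}$ lands in the right half-plane. Converting this into the sharp statement $\Im\Phi(w)\ge 0$ for $\Im w>0$ is delicate, because the naive argument estimate is too weak; I would instead build the Chebotarev/Stieltjes representation of $\Phi$ directly, using Lemma~\ref{lemma:cheb1} to force the zeros of the even and odd parts of $f$ to be real and interlacing --- equivalently, the poles of $\Phi$ to be negative with residues of the correct sign --- and in the transcendental case approximate $f$ by its partial products, which are stable quasi-polynomials, passing to the limit through Lemma~\ref{lemma:decomp_decrease} and Theorem~\ref{th:Stieltjes}. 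Securing this converse together with the convergence bookkeeping is the crux; once $\Phi\in\mathcal S$ is in hand, the minor pattern follows immediately from Theorem~\ref{th:cheb_mei_pos}.
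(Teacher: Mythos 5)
First, a framing point: the paper does not prove Theorem~\ref{th:stable_p01dec} at all --- it is imported verbatim from Kre\u\i n \cite{AkhKrein} and Chebotarev--Me\u\i man \cite{ChebMei}, and the paper only \emph{uses} it (in the proof of Theorem~\ref{th:qstable_tnn}) via exactly the identification $\mathcal H_f=H(p,q)$ and Theorem~\ref{th:cheb_mei_pos} that you set up. So your skeleton --- reduce the minor pattern to $\Phi=q/p\in\mathcal S$, then prove a Hermite--Biehler correspondence --- is the right one. But the decisive analytic step has a genuine gap: you propose to ``read off'' the summability $\sum_\mu 1/x_\mu+\sum_\nu\Re(1/\alpha_\nu)<\infty$ (which is strictly stronger than genus~$\le 1$) from the Chebotarev representation~\eqref{F_sum_form_R} of $\Phi$. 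That cannot work: the bound $\sum_\nu|A_\nu|/\sigma_\nu^2<\infty$ holds for \emph{every} meromorphic $\mathcal R$-function regular at the origin and concerns the poles of $\Phi$, i.e.\ the zeros of the odd part in the variable $w=z^2$, not the zeros of $f$. For $f(z)=e^{z}$ one has $\Phi(w)=\sqrt{w}\coth\sqrt{w}=1+\sum_{n\ge 1}\frac{2w}{w+\pi^2n^2}$, where the representation holds with non-summable weights and carries no information whatsoever about zero sums of $f$. Converting ``$\Phi\in\mathcal S$ plus $f$ of genus $\le 1$'' into the genus-$0$-type summability over the zeros of $f$, together with $\gamma\ge 0$ in~\eqref{Hrw_func} (which you never address --- $f_1>0$ only gives $\gamma>-\sum 1/x_\mu-2\sum\Re(1/\alpha_\nu)$), is precisely the hard content of Kre\u\i n's theorem; the closest tools in this paper, Lemma~\ref{lemma:decomp_decrease} and Theorem~\ref{th:Stieltjes}, presuppose that $p,q$ are of genus~$0$, which is part of what must be proven here.

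Several secondary steps also fail as written. In the degenerate case, $f_1=0$ does \emph{not} imply that the odd part vanishes identically ($f(z)=f_0+z^3$ satisfies all hypotheses with $f_1=0$); in the minors-to-representation direction you must instead extract $f_3=f_5=\dots=0$ inductively from the vanishing of the odd-order principal minors (e.g.\ $\mathcal H_f^{(3)}=-f_0f_3$, $\mathcal H_f^{(5)}=-f_0^2f_5^2$ once $f_1=f_3=0$), and Lemma~\ref{lemma:H_tnn-p_zero} is off-limits because only the principal minors $\mathcal H_f^{(k)}$ are hypothesized, not total nonnegativity. Your converse sketch is circular where it invokes Lemma~\ref{lemma:cheb1}: that lemma \emph{assumes} $q/p\in\mathcal R$ and deduces interlacing, so it cannot be used to force interlacing from stability of $f$; the viable route you half-name is approximation of $f$ by its stable partial products (classical Hermite--Biehler for polynomials, then a normal-family limit of the rational $\mathcal S$-functions $\Phi_n$), for which neither Lemma~\ref{lemma:decomp_decrease} nor Theorem~\ref{th:Stieltjes} is the relevant instrument. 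Finally, your pole count is false for genuine quasi-polynomials: if $f=e^{\gamma z}P(z)$ with $\gamma>0$ and $\deg P=n$, then $\Phi$ is \emph{not} rational and has infinitely many poles (again $f=e^z$), so $N=\infty$ while $\omega_1+2\omega_2+1=n+1<\infty$ --- an edge case where even the literal indexing of the cited statement requires care, and which your degree count silently contradicts.
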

Note that the restriction on the genus of $f(z)$ implies the additional condition
$\sum_{1\le\nu\le\omega_2}\frac 1{|\alpha_\nu|^2}<\infty$. Based on Theorems~\ref{th:main1}
and~\ref{th:stable_p01dec} we deduce the following fact.
{
    \renewcommand{\thetheorem}{\ref{th:qstable_tnn}}
\begin{theorem}
    Given a power series $f(z)=z^j\sum_{k=0}^\infty\,f_kz^k$, where $f_0>0$ and $j$ is a
    nonnegative integer, the infinite matrix $\mathcal H_f$ defined by~\eqref{mcH_form} is
    totally nonnegative if and only if the series~$f(z)$ converges to a function of the form
    \begin{equation} \label{eq:f_as_quo2}
        f(z)=Cz^je^{\gamma_1 z+\gamma_2 z^2}
        \frac{
            \prod_{1\le\mu\le\omega_1}\left( 1+\frac {z}{x_\mu}\right)\prod_{1\le\nu\le\omega_2}\left(
                1+\frac {z}{\alpha_\nu}\right)\left(1+\frac {z}{\overline\alpha_\nu}\right)}{
        \prod_{1\le\lambda\le\omega_3}\left( 1+\frac {z}{y_\lambda}\right)
        \left( 1-\frac {z}{y_\lambda}\right)},
    \end{equation}
    for some~$\omega_1$, $\omega_2$ and~$\omega_3$, $0\le\omega_1,\omega_2,\omega_3\le\infty$.
    Here $C>0$,
    \begin{gather}\label{eq:p_form3}
        \gamma_1, \gamma_2\ge 0,\  x_\nu,y_\lambda > 0,\ \Re\alpha_\nu\ge 0\text{ and }
        \Im\alpha_\nu> 0\text{ for all }\mu, \nu, \lambda,
        \\ \label{eq:p_form4}
        \sum_{1\le\mu\le\omega_1} \frac 1{x_\mu} +
        \sum_{1\le\nu\le\omega_2}\Re\left(\frac{1}{\alpha_\nu}\right) +
        \sum_{1\le\nu\le\omega_2}\frac 1{|\alpha_\nu|^2} + \sum_{1\le\lambda\le\omega_3}
        \frac 1{y_\lambda^2}<\infty.
    \end{gather}
\end{theorem}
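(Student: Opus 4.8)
The plan is to recognise $\mathcal H_f$ as a Hurwitz-type matrix in the sense of \eqref{eq:Hpq_def2} and, via the substitution $w=z^2$, reduce everything to Theorem~\ref{th:main1}, using Theorem~\ref{th:stable_p01dec} as the dictionary between the $\mathcal S$-property of a ratio and the zero distribution of~$f$. First I would discard the monomial factor: the matrix depends only on the coefficients $f_k$ of $s(z)\coloneqq\sum_{k\ge0}f_kz^k=f(z)/z^j$, and multiplying $s$ by $z^j$ carries a representation \eqref{eq:f_as_quo2} with $j=0$ into the general one, so I may assume $j=0$ and $s(0)=f_0>0$. Writing $q(w)\coloneqq\sum_k f_{2k}w^k$ and $p(w)\coloneqq\sum_k f_{2k+1}w^k$ for the even and odd coefficient series, a direct inspection shows $\mathcal H_f=H(p,q)$ and $s(z)=q(z^2)+zp(z^2)$; thus $q(w)/p(w)$ is the candidate $\mathcal S$-function, its variable being $w=z^2$. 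Since every entry of a totally nonnegative matrix is nonnegative, $f_1=p(0)\ge0$.

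For the forward implication I would split on $f_1$. If $f_1=0$, Lemma~\ref{lemma:H_tnn-p_zero} forces $p\equiv0$, so $s(z)=q(z^2)$ with $q$ of the form \eqref{eq:tnns_generator2}; rewriting $1+z^2/\alpha_\nu=(1+z/\alpha'_\nu)(1+z/\overline{\alpha'_\nu})$ with $\alpha'_\nu=i\sqrt{\alpha_\nu}$ (so $\Re\alpha'_\nu=0$, $\Im\alpha'_\nu>0$) and $1-z^2/\beta_\mu=(1+z/y_\mu)(1-z/y_\mu)$ with $y_\mu=\sqrt{\beta_\mu}$ yields \eqref{eq:f_as_quo2}. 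If $f_1>0$, I normalise the denominator to $p(0)=1$ (a positive scaling, preserving total nonnegativity) and apply Theorem~\ref{th:main1}: then $F=q/p\in\mathcal S$ and, up to a common factor $g$ of the form \eqref{eq:tnns_generator} with $g(0)=1$, one has $p=g\tilde p$ and $q=g\tilde q$ with $\tilde p,\tilde q$ coprime of genus~$0$. Peeling off $g(z^2)$ I set $\psi(z)\coloneqq\tilde q(z^2)+z\tilde p(z^2)$, so that $s(z)=g(z^2)\psi(z)$. Here $\psi$ is real entire of genus $\le1$ (as $\tilde p,\tilde q$ have genus~$0$), and its even and odd parts $\tilde q(z^2),\,z\tilde p(z^2)$ are coprime because $\tilde p,\tilde q$ are coprime and $\tilde q(0)=f_0\ne0$. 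The minors of $\mathcal H_\psi=H(\tilde p,\tilde q)$ inherit the positive-then-zero pattern of Theorem~\ref{th:cheb_mei_pos} (since $\tilde q/\tilde p=F\in\mathcal S$), so Theorem~\ref{th:stable_p01dec} applies and gives $\psi$ in the form \eqref{Hrw_func}. Reassembling $s=g(z^2)\psi(z)$ and converting the factors of $g(z^2)$ exactly as in the $f_1=0$ case produces \eqref{eq:f_as_quo2}, with \eqref{eq:p_form4} following from $\sum 1/x_\mu<\infty$, $\sum\Re(1/\alpha_\nu)<\infty$, the genus-one bound $\sum1/|\alpha_\nu|^2<\infty$, and the convergence of the defining sums of~$g$.

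For the converse I reverse this. Given \eqref{eq:f_as_quo2} with $j=0$, I collect into $g(w)\coloneqq e^{\gamma_2 w}\prod_{\Re\alpha_\nu=0}(1+w/|\alpha_\nu|^2)\big/\prod_\lambda(1-w/y_\lambda^2)$ the even exponential, the purely imaginary conjugate zeros and the symmetric poles, so that $g$ has the form \eqref{eq:tnns_generator}, and into $\psi(z)$ the remaining genus-$\le1$ factor, which is of the form \eqref{Hrw_func}; then $s(z)=g(z^2)\psi(z)$. If $\psi$ is constant I conclude by the converse half of Lemma~\ref{lemma:H_tnn-p_zero}. Otherwise Theorem~\ref{th:stable_p01dec} gives the minor pattern of $\mathcal H_\psi$, whence $\tilde q/\tilde p\in\mathcal S$ by Theorem~\ref{th:cheb_mei_pos}, where $\tilde q,\tilde p$ are the even/odd parts of $\psi$ and have genus~$0$ because they are of order $\le1/2$. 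Theorem~\ref{th:main1} then makes $H(\tilde p,\tilde q)$ totally nonnegative, and the factorisation \eqref{eq:Hpjqj_dec} gives $\mathcal H_s=H(g\tilde p,g\tilde q)=H(\tilde p,\tilde q)\,T(g)$, a product of totally nonnegative matrices ($T(g)$ by Theorem~\ref{th:AESW}), hence totally nonnegative.

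The hard part will be the passage through $w=z^2$: I must check that removing the meromorphic factor $g$ from the ratio $q/p$ leaves an \emph{entire} function $\psi$ meeting \emph{all} hypotheses of Theorem~\ref{th:stable_p01dec} --- real, of genus $0$ or $1$, with coprime even and odd parts --- and that the conversions $1+z^2/a=(1+z/i\sqrt a)(1+z/\overline{i\sqrt a})$ and $1-z^2/b=(1+z/\sqrt b)(1-z/\sqrt b)$ carry the convergence conditions of \eqref{eq:tnns_generator} precisely onto the four sums of \eqref{eq:p_form4} and back. Keeping the genus bookkeeping consistent --- so that $\tilde p,\tilde q$ remain of genus~$0$ while $\psi$ stays of genus $\le1$, and so that the purely imaginary zeros coming from $g(z^2)$ merge correctly with the $\Re\alpha_\nu>0$ pairs coming from $\psi$ --- is where the argument needs the most care.
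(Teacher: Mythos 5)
Your proposal is correct and follows the paper's own proof essentially step for step: the same even/odd splitting giving $\mathcal H_f=f_0\,H(p,q)$, the same case split on $f_1$ using Lemma~\ref{lemma:H_tnn-p_zero} versus Theorem~\ref{th:main1}, the same transfer between the minor pattern of Theorem~\ref{th:cheb_mei_pos} and the zero distribution via Theorem~\ref{th:stable_p01dec}, and the same factorization $\mathcal H_f=C\,H(h_o,h_e)\,T(g)$ with $T(g)$ totally nonnegative by Theorem~\ref{th:AESW} in the sufficiency direction. The only micro-justification to repair is your claim that the even and odd parts of $\psi$ have order $\le 1/2$ and hence genus $0$ --- since $\psi$ can have order up to $2$ this inference fails, and the paper instead derives genus $0$ by writing the genus-$\le 1$ Weierstra\ss\ product of the even part and observing that evenness forces its exponential factor to vanish, which is exactly the genus bookkeeping you yourself flagged as the delicate point.
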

    \addtocounter{theorem}{-1}
}

\begin{proof}
    Suppose that $f(z)$ is represented as~\eqref{eq:f_as_quo2}. We can express it as
    \[
    f(z)=Cz^{j}g(z^2)h(z),\quad \text{where}
    \]
    \begin{gather}
        h(z)\coloneqq e^{\gamma_1 z} \prod_{1\le\mu\le\omega_1}\left( 1+\frac {z}{x_\mu}\right)
        \prod_{\substack{\Re\alpha_\nu>0\\1\le\nu\le\omega_2}}\left(
            1+\frac {z}{\alpha_\nu}\right)\left(1+\frac{z}{\overline\alpha_\nu}\right)
        \quad\text{and}\label{eq:h_form}\\
        g(z^2)\coloneqq e^{\gamma_2 z^2}
        \prod_{\substack{\Re\alpha_\nu=0\\1\le\nu\le\omega_2}}\left(
            1+\frac {z^2}{i^2\alpha_\nu^2}\right)
        \Big/ \prod_{1\le\lambda\le\omega_3}\left( 1-\frac {z^2}{y_\lambda^2}\right).\label{eq:g_form}
    \end{gather}
    Note that $g(z)$ has the form~\eqref{eq:tnns_generator2}, so by Theorem~\ref{th:AESW} its
    Toeplitz matrix $T(g)$ is totally nonnegative.
    
    Split $h(z)$ into the odd part $zh_o(z^2)$ and the even part $h_e(z^2)$ so that
    \[
    h(z) = h_e(z^2) + z h_o(z^2).
    \]
    The function $h(z)$ is of genus not exceeding~$1$ as well as $h_e(z^2)$ and $h_o(z^2)$. This
    implies that the genus of $h_o(z)$ and $h_e(z)$ is~$0$. Indeed, for example, the
    function~$h_e(z^2)$ with zeros~$\pm\delta_n$, $n=1,2,\dots$, can be represented as the
    Weierstra\ss\ product
    \[
    h_e(z^2)
    =e^{cz}\prod_n\left(1-\frac{z}{\delta_n}\right)e^{\dfrac{z}{\delta_n}}
      \left(1+\frac{z}{\delta_n}\right)e^{-\dfrac{z}{\delta_n}}
    =e^{cz}\prod_n\left(1-\frac{z^2}{\delta_n^2}\right).
    \]
    And since it depends only on $z^2$, we necessarily have $c=0$ in this representation, which
    implies
    \[
    h_e(z) =\prod_n\left(1-\frac{z}{\delta_n^2}\right).
    \]

    Let us show $h_o(z)$ and $h_e(z)$ are coprime. Denote $r\coloneqq\gcd(h_o,h_e)$ if
    $h_o(z)\not\equiv 0$. If $h_o(z)\equiv 0$ we set $r(z)\coloneqq h_e(z)$.  Assume that
    $r(z)\not\equiv 1$. So it has zeros, since its genus is zero and $r(0)=h_e(0)=1$. However,
    if $r(z_0)=0$ then $h(\sqrt{z_0})=h(-\sqrt{z_0})=0$. Since one of the points
    $\pm\sqrt{z_0}\ne 0$ is in the closed right half of the complex plane (independently of the
    branch of the square root) we get a contradiction to~\eqref{eq:h_form}.

    If $h_o(z)\equiv 0$ then $H(h_o,h_e)=H(0,1)$ is totally nonnegative. This implies the total
    nonnegativity of the matrix
    \[\mathcal H_f=C\,H(gh_o,gh_e) =C\,H(h_o,h_e)\,T(g).\]
    If $h_o(z)\not\equiv 0$, then the function~$h(z)$ has the form~\eqref{eq:h_form} and its odd
    and even parts are coprime. That is, $h(z)$ satisfies the conditions of
    Theorem~\ref{th:stable_p01dec}. Therefore, $\mathcal H_{h}^{(2)}=H^{(2)}(h_o,h_e)$,
    $\mathcal H_{h}^{(3)}=H^{(3)}(h_o,h_e)$,\dots\ is a positive sequence possibly followed by
    zeros. Thus, by Theorem~\ref{th:cheb_mei_pos}, we obtain $\frac{h_e}{h_o}\in\mathcal S$.
    Then the applying of Theorem~\ref{th:main1} to the function $\frac{h_e}{h_o}$ gives the
    total nonnegativity of the matrices $H(h_o,h_e)$ and, consequently,~$\mathcal H_f$.
    
    Let us prove the converse. Suppose that the Hurwitz matrix $\mathcal H_f$ is totally
    nonnegative. We can split the series
    $f_0^{-1}z^{-j}f(z)=\sum_{k=0}^\infty\,\frac{f_k}{f_0}z^k$ into the even part $q(z^2)$ and
    the odd part $z p(z^2)$ so that $f(z)$ can be expressed as follows
    \[
        f(z) = f_0 z^j \left(q(z^2) + z p(z^2)\right).
    \]
    It gives~$\mathcal H_f=f_0H(p,q)$.

    If $f_1>0$, then since the matrix~$H(p,q)$ is totally nonnegative, by Theorem~\ref{th:main1}
    there exists a meromorphic function~$g(z)$ of the form~\eqref{eq:tnns_generator2} such that
    $\tilde{p}(z)=\frac{p(z)}{g(z)}$ and $\tilde{q}(z)=\frac{q(z)}{g(z)}$ are coprime entire
    functions of genus~$0$. Moreover, the ratio~$\frac{\tilde{q}(z)}{\tilde{p}(z)}$ is an
    $\mathcal{S}$-function. Let
    \[\tilde{f}(z)\coloneqq\tilde{q}(z^2) + z \tilde{p}(z^2).\]
    By Theorem~\ref{th:cheb_mei_pos} the minors
    $\mathcal H_{\tilde{f}}^{(2)}=H^{(2)}(\tilde{p},\tilde{q})$,
    $\mathcal H_{\tilde{f}}^{(3)}=H^{(3)}(\tilde{p},\tilde{q})$,\dots\ form a positive
    sequence possibly followed by zeros. Since $\tilde{q}(z)$ and $\tilde{p}(z)$ are coprime,
    the function~$\tilde{f}(z)$ has the form~\eqref{Hrw_func} by Theorem~\ref{th:stable_p01dec}.

    If $f_1=0$ then according to Lemma~\ref{lemma:H_tnn-p_zero}, $p(z)\equiv 0$ and $q(z)$ has
    the form~\eqref{eq:tnns_generator2}. Here we set $g(z)\coloneqq q(z)$ so that
    $\tilde{f}(z)\equiv 1$.

    Now consider both cases $f_1=0$ and $f_1>0$. We showed that~$\tilde{f}(z)$ can be
    represented as in~\eqref{Hrw_func}, while~$g(z)$ can be represented as
    in~\eqref{eq:tnns_generator2}. That is, after the appropriate renaming of zeros and poles,
    the function~$\tilde{f}(z)$ has the form~\eqref{eq:h_form}, the function~$g(z^2)$ has the
    form~\eqref{eq:g_form} and the conditions~\eqref{eq:p_form3}--\eqref{eq:p_form4} are
    satisfied. Thereby
    \[
    f(z) = f_0 z^j g(z^2)\tilde{f}(z)
    \]
    can be represented as in~\eqref{eq:f_as_quo2}.
\end{proof}

\addtocontents{toc}{\SkipTocEntry}
\section*{Acknowledgements}
The author is grateful to Olga Holtz and Mikhail Tyaglov for helpful comments
    and stimulating discussions.

\end{document}